\newtheorem{theorem}{Theorem}[section]
\newtheorem*{acknowledgement}{Acknowledgement}
\newtheorem{corollary}[theorem]{Corollary}
\newtheorem{definition}[theorem]{Definition}
\newtheorem{lemma}[theorem]{Lemma}
\newtheorem{proposition}[theorem]{Proposition}
\newtheorem{remark}[theorem]{Remark}
\newcommand{\f}[1]{\mathfrak{#1}}
\newcommand{\calli}[1]{\mathcal{#1}}
\newcommand{\bbC}{\mathbb{C}}
\newcommand{\D}[1]{\Lambda_{#1}}
\newcommand{\B}{\mathcal{B}}
\begin{document}




\title[Symplectic multiplicity spaces]{An analysis of the multiplicity spaces in branching of symplectic groups}
\author{Oded Yacobi}
\email{oyacobi@math.tau.ac.il}
\address{School of Mathematical Sciences, Tel Aviv University, Tel Aviv
69978, Israel}
\subjclass[2000]{20G05, 05E15}



\begin{abstract}
Branching of symplectic groups is not multiplicity-free.  We
describe a new approach to resolving these multiplicities that is
based on studying the associated branching algebra $\B$. The algebra
$\B$ is a graded algebra whose components encode the multiplicities
of irreducible representations of $Sp_{2n-2}$ in irreducible
representations of $Sp_{2n}$. Our first theorem states that the map
taking an element of $Sp_{2n}$ to its principal $n \times (n+1)$
submatrix induces an isomorphism of $\B$ to a different branching
algebra $\B'$.  The algebra $\B'$ encodes multiplicities of
irreducible representations of $GL_{n-1}$ in certain irreducible
representations of $GL_{n+1}$. Our second theorem is that each
multiplicity space that arises in the restriction of an irreducible
representation of $Sp_{2n}$ to $Sp_{2n-2}$ is canonically an
irreducible module for the $n$-fold product of $SL_{2}$.  In
particular, this induces a canonical decomposition of the
multiplicity spaces into one dimensional spaces, thereby resolving
the multiplicities.

\end{abstract}

\subjclass[2000]{20G05, 05E10}
\keywords{symplectic group, branching
algebra}
\maketitle







\tableofcontents

\section{Introduction \label{sectionone}}
The purpose of this paper is to give a new interpretation of
symplectic branching which, unlike the branching for the other
towers of classical groups, is not multiplicity free.  In other
words, an irreducible representation of $Sp_{2n}$ does not decompose
uniquely into irreducible representations of $Sp_{2n-2}$ (embedded
as the subgroup fixing pointwise a two-dimensional non-isotropic
subspace). We resolve this ambiguity by analyzing the algebraic
structure of the associated multiplicity spaces.

Our main object of study is an algebra $\B$ associated to the pair
$(Sp_{2n-2},Sp_{2n})$.  This ``branching algebra'' is a graded
algebra whose components are all the multiplicity spaces that appear
in the restriction of irreducible representations of $Sp_{2n}$ to
$Sp_{2n-2}$.  By definition $\B$ is a certain subalgebra of the ring of
regular functions, $\calli{O}(Sp_{2n})$, on $Sp_{2n}$:
$$
\B=\calli{O}(\overline{U}_{C_{n}} \setminus Sp_{2n} \text{ } /
\text{ } U_{C_{n-1}}).
$$
(See Section \ref{section2.2} for notation.)  From this realization
it follows that $\B$ has a natural action of $SL_{2}$ by right
translation.

Our first result relates $\B$ to a different branching algebra associated
to the branching pair $(GL_{n-1},GL_{n+1})$.  Using
$(GL_{n-1},GL_{n+1})$-duality we define the algebra
$$
\B'=\calli{O}(\overline{U}_{n} \setminus M_{n,n+1} \text{ } / \text{
} U_{n-1}).
$$
This algebra is a ``restricted'' branching algebra, as it is
isomorphic to a direct sum of only certain multiplicity spaces that
occur in branching from $GL_{n+1}$ to $GL_{n-1}$.  Note that $\B'$
is also a graded $SL_{2}$-algebra. We consider the function
$\psi:Sp_{2n} \rightarrow M_{n,n+1}$ which maps an element of
$Sp_{2n}$ to its $n \times (n+1)$ principal submatrix. The theorem is
that the induced map on functions $\psi^{*}:\calli{O}(M_{n,n+1})
\rightarrow \calli{O}(Sp_{2n})$ restricts to give an isomorphism
$\psi^{*}:\B' \rightarrow \B$ of graded $SL_{2}$-algebras.

This theorem allows us to reduce questions about branching from
$Sp_{2n}$ to $Sp_{2n-2}$ to analogous questions concerning branching
from $GL_{n+1}$ to $GL_{n-1}$.  The latter are easier, as they can
be ``factored'' through $GL_{n}$.  We will illustrate this reduction
technique several times, most notably in order to prove our second
theorem.

To describe our second theorem we introduce a family of subalgebras
of $\calli{B}$ indexed by a finite set, $\Sigma$, of so-called order
types. We prove that each subalgebra $\calli{B}_{\sigma}$ is
isomorphic to the algebra, $\calli{O}(V)$, of polynomials on a
vector space $V$. This isomorphism is unique up to scalars.
Moreover, $V$ can be given the structure of an
$L=\prod_{i=1}^{n}SL_{2}$-module. Therefore, via this isomorphism,
we obtain a canonical action of $L$ on $\calli{B}_{\sigma}$ by
algebra automorphisms.  The action of $L$ is well-defined on the
intersections of these subalgebras, allowing us to glue the modules
together to obtain a representation, $\Phi$, of $L$ on $\calli{B}$.

The representation $(\Phi, \calli{B})$ of $L$ satisfies some
remarkable properties.  First and foremost, it identifies each
multiplicity space as an explicit irreducible $L$-module.  Secondly,
the restriction of $\Phi$ to the diagonal subgroup of $L$ recovers
the natural $SL_{2}$ action on $\calli{B}$. Finally, it is the
unique such representation acting by algebra automorphisms on the
subalgebras $\calli{B}_{\sigma}$.

As a corollary of this theorem we obtain resolution of the
multiplicities that occur in branching of symplectic groups. Indeed,
irreducible $L$-modules have one dimensional weight spaces.
Therefore, via $\Phi$, we obtain a decomposition of the multiplicity
spaces into one dimensional spaces.  This decomposition is
canonical, i.e. depends only the choice of torus of $Sp_{2n}$ that
is fixed throughout.  The other known approach to this problem uses
quantum groups, where these multiplicities are resolved using an
infinite dimensional Hopf algebra called the twisted Yangian
(\cite{M2}).

The basis of $\B$ which we obtain from the action of $L$ is unique
up to scalar.  In \cite{KY} we study properties of this basis, and,
in particular, show that it is a standard monomial basis, i.e. it
satisfies a straightening algorithm.  By induction this basis can be used to obtain a basis for
irreducible representations of $Sp_{2n}$.  The resulting basis is a partial analogue of
the Gelfand-Zetlin basis (\cite{GZ1}); to be properly called a ``Gelfand-Zetlin'' basis one must also
compute the action of Chevalley generators.  The only such basis known is the Gelfand-Zetlin-Molev basis arising from
the Yangian theory mentioned above (\cite{M2}).  In a future work we will compare the basis resulting from our
work to the Gelfand-Zetlin-Molev basis.

%
%
%

\begin{acknowledgement}
The results herein are based on the author's UC San Diego PhD thesis
(2009).  The author is grateful to his advisor, Nolan Wallach, for
his guidance and insights.  The author also thanks Avraham Aizenbud,
Sangjib Kim, Allen Knutson and Gerald Schwarz for helpful
conversations. This work was supported in part by the ARCS
Foundation.
\end{acknowledgement}

\section{Preliminaries}
Our main object of study, $\calli{B}$, is an example of a branching
algebra. In section \ref{section2.1} we define branching algebras
and their associated branching semigroups.  In section
\ref{section2.2} we fix some notation that will be used throughout.
\subsection{\label{section2.1} Branching algebras}
Let $G$ be a connected
classical group with identity $e\in G$. \ Fix a maximal torus $T_{G}$, Borel
subgroup $B_{G}$, and unipotent radical $U_{G}\subset B_{G}$ so that $T_{G}U_{G}=B_{G}$. \ Let $%
\overline{U}_{G}$ be the unipotent group opposite $U_{G}$.
When convenient, we work in the setting of Lie algebras. \ We
denote the complex Lie algebra of a complex Lie group by the corresponding
lower-case fraktur letter.

Fix the choice of positive roots giving $U_{G}$: $\Phi ^{+}_{G}=\Phi
(\f{b}_{G},\f{t}_{G})$. \ Let $\Lambda_{G} \subset \f{t}^{*}$ be the
corresponding semigroup of dominant integral weights. \ Denote by
$F^{\lambda }_{G}$ the finite-dimensional irreducible representation
of $G$ of highest weight $\lambda \in \Lambda_{G}$.

For an affine algebraic variety $X$, let $%
\mathcal{O}(X)$ denote the algebra of regular functions on $X$.
The group $G$ has the structure of an affine algebraic variety, and
$\mathcal{O}(G)$ is a $G\times G$-module under left and right
translation.  Let $\calli{R}_{G}=\mathcal{O}(\overline{U}_{G}
\backslash G)$ be the left $\overline{U}_{G}$-invariant functions on
$G$:
$$
\calli{R}_{G} = \{ f \in \calli{O}(G) : f(\overline{u}g)=f(g) \text{
for all } \overline{u} \in \overline{U}_{G} \text { and } g \in G
\}.
$$
Under the right action of $G$ (see Theorem 4.2.7. ,\cite{GW}):
\begin{equation*}
\calli{R}_{G} \cong \bigoplus_{\lambda \in
\Lambda_{G}}((F^{\lambda}_{G})^{*})^{\overline{U}_{G}}\otimes
F^{\lambda }_{G}\cong \bigoplus_{\lambda \in \Lambda_{G}}F^{\lambda
}_{G}\text{.}
\end{equation*}%


Henceforth identify $%
F^{\lambda }_{G}$ with its image in $\calli{R}_{G}$.
Let $f^{\lambda }_{G} \in F^{\lambda }_{G}$ be the unique highest
weight vector such that $f^{\lambda }_{G}(e)=1$. We call $f^{\lambda
}_{G}$ the \textbf{canonical highest weight vector} of
$F^{\lambda}_{G}$.
Now let $\lambda ,\lambda ^{\prime }\in \Lambda_{G}$.  Then $f^{\lambda }_{G}f^{\lambda ^{\prime }}_{G}$ is $U$%
-invariant of weight $\lambda +\lambda ^{\prime }$. \ Since also
$f^{\lambda }_{G}f^{\lambda ^{\prime }}_{G}(e)=1$, it follows that
$f^{\lambda }_{G}f^{\lambda
^{\prime }}_{G}=f^{\lambda +\lambda ^{\prime }}_{G}$, and therefore $%
F^{\lambda }_{G}F^{\lambda ^{\prime }}_{G}=F^{\lambda +\lambda
^{\prime }}_{G}$.

The \textbf{Cartan product},
$\pi _{\lambda ,\lambda ^{\prime }}:F^{\lambda }_{G}\otimes
F^{\lambda ^{\prime }}_{G}\rightarrow F^{\lambda +\lambda ^{\prime
}}_{G}$,
is defined by $\pi _{\lambda ,\lambda ^{\prime }}(v\otimes v^{\prime
})=vv^{\prime }$. The \textbf{Cartan embedding},
$j_{\lambda ,\lambda ^{\prime }}:F^{\lambda +\lambda ^{\prime
}}_{G}\rightarrow F^{\lambda }_{G}\otimes F^{\lambda ^{\prime
}}_{G}$,
is given by setting $j_{\lambda ,\lambda ^{\prime }}(f^{\lambda
+\lambda ^{\prime
}}_{G})=f^{\lambda }_{G}\otimes f^{\lambda ^{\prime }}_{G}$, and extending by $G$%
-linearity.

Suppose now that $H \subset G$ is a connected Lie subgroup, and we
have chosen its distinguished subgroups so that $\overline{U}_{H}
\subset \overline{U}_{G}$, $T_{H} \subset T_{G}$, and $U_{H} \subset
U_{G}$.
Consider the subalgebra of bi-invariants
\begin{equation*}
\calli{B}(H,G)=\calli{O}(\overline{U}_{G}\setminus G\text{ }/\text{
}U_{H}) \subset \calli{R}_{G}.
\end{equation*}
In other words, $\B(H,G)$ consists of the functions $f \in
\calli{R}_{G}$ such that $f(gu)=f(g)$ for all $g \in G$ and $u \in
U_{H}$.

Since $T_{H}$ normalizes $U_{H}$ there is an action of $T_{H}$ on
$\calli{B}(H,G)$ by right translation.  The decomposition of
$\calli{B}(H,G)$ into $T_{H}$ weight spaces is:
\begin{equation*}
\label{summultspaces} \calli{B}(H,G) = \bigoplus_{(\mu,\lambda) \in
\Lambda_{H} \times \Lambda_{G}} F^{\lambda / \mu}.
\end{equation*}
The weight space $F^{\lambda / \mu}$ is defined as
\begin{equation}
\label{multspacedef} F^{\lambda / \mu} = \{ f \in
(F^{\lambda}_{G})^{U_{H}} : f(gt)=t^{\mu}f(g) \text{ for all } g \in
G \text{ and } t \in T_{H} \}.
\end{equation}
Equivalently, $F^{\lambda / \mu}$ is the $T_{G} \times T_{H}$ weight
space of $\B$ corresponding to the weight $(-\lambda)$ for $T_{G}$
and $\mu$ for $T_{H}$.

Note that the centralizer $Z_{G}(H)$ acts on $F^{\lambda / \mu}$ by
right translation.  As a $Z_{G}(H)$-module there is a canonical
isomorphism
$$
F^{\lambda / \mu} \cong Hom_{H}(F^{\mu}_{H},F^{\lambda}_{G}).$$ (The
isomorphism maps $\phi \in Hom_{H}(F^{\mu}_{H},F^{\lambda}_{G})$ to
$\phi(f^{\mu}_{H}) \in F_{\lambda / \mu}$.)  In particular, the
dimension of $F^{\lambda / \mu}$ counts the branching multiplicity
of $F^{\mu}_{H}$ in the restriction to $H$ of $F^{\lambda}_{G}$. For
this reason $\calli{B}(H,G)$ is termed the \textbf{branching
algebra} for the pair $(H,G)$ (cf. \cite{HTW}, \cite{Z} and
references therein).
\begin{lemma}
\label{LemmaGrade}Let $(\mu,\lambda), (\mu',\lambda') \in
\Lambda_{H} \times \Lambda_{G}$. Then
\begin{equation*}
\pi _{\lambda ,\lambda ^{\prime }}(F^{\lambda / \mu} \otimes
F^{\lambda' / \mu'}) \subset F^{\lambda+\lambda' / \mu+\mu'}.
\end{equation*}
\end{lemma}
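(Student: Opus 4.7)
The plan is a direct computational verification using the defining property (\ref{multspacedef}) of the spaces $F^{\lambda/\mu}$ together with the fact, established just above the lemma, that the pointwise product of functions implements both the Cartan product and the equality $F^{\lambda}_G F^{\lambda'}_G = F^{\lambda+\lambda'}_G$ inside $\calli{R}_G$.

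Take arbitrary $f \in F^{\lambda/\mu}$ and $f' \in F^{\lambda'/\mu'}$, and set $h = ff' = \pi_{\lambda,\lambda'}(f \otimes f')$. I would verify three things in turn. First, $h \in F^{\lambda+\lambda'}_G$: this is immediate from $F^{\lambda}_G F^{\lambda'}_G = F^{\lambda+\lambda'}_G$. Second, $h$ is $U_H$-invariant under right translation: for any $u \in U_H$ and $g \in G$, using that $f, f'$ are both $U_H$-invariant under right translation,
\begin{equation*}
h(gu) = f(gu) f'(gu) = f(g) f'(g) = h(g).
\end{equation*}
Third, $h$ transforms by the character $\mu + \mu'$ under right translation by $T_H$: for any $t \in T_H$ and $g \in G$,
\begin{equation*}
h(gt) = f(gt) f'(gt) = t^{\mu} f(g) \cdot t^{\mu'} f'(g) = t^{\mu+\mu'} h(g).
\end{equation*}
These three properties together are precisely the definition of $F^{\lambda+\lambda'/\mu+\mu'}$, so $h$ lies in that space.

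There is no real obstacle here; the statement is essentially a tautology once one unwinds the definition of $F^{\lambda/\mu}$ as a joint $T_G \times T_H$ weight space and recalls that multiplication of functions is compatible with right translation. The only mild subtlety worth flagging explicitly is that the product $ff'$ a priori lives in $\calli{R}_G$, and one must invoke the identification of the Cartan product with pointwise multiplication (and with the embedding $F^{\lambda+\lambda'}_G \hookrightarrow \calli{R}_G$) to conclude that $h$ indeed lands in $F^{\lambda+\lambda'}_G$ rather than in some larger sum of isotypic components. After that identification, the $U_H$- and $T_H$-conditions are preserved by multiplication because both are multiplicative conditions on the values of functions.
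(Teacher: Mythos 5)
Your proof is correct. Where you diverge from the paper is only in the mechanism of verification: the paper argues structurally, noting that $\pi_{\lambda,\lambda'}$ is a $G$-module morphism, hence a $U_{H}$-module morphism and a $T_{H}$-intertwiner, so it carries the $\mu+\mu'$ weight space of $(F^{\lambda}_{G})^{U_{H}}\otimes(F^{\lambda'}_{G})^{U_{H}}$ into the weight space $F^{\lambda+\lambda'/\mu+\mu'}$, and $F^{\lambda/\mu}\otimes F^{\lambda'/\mu'}$ sits inside that weight space. You instead exploit the realization inside $\calli{R}_{G}$, where the Cartan product is literally pointwise multiplication, and check the three defining conditions of (\ref{multspacedef}) by direct evaluation at $gu$ and $gt$. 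Both routes are valid and short; yours is more elementary and self-contained (and you correctly flag the one point needing care, namely that $ff'$ lands in $F^{\lambda+\lambda'}_{G}$, which follows from the identity $F^{\lambda}_{G}F^{\lambda'}_{G}=F^{\lambda+\lambda'}_{G}$ established just before the lemma), while the paper's equivariance argument is independent of the function model and would apply verbatim to any $G$-equivariant product map, which is in the spirit of how Cartan products are used later in the paper.
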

\begin{proof}
Since $\pi _{\lambda ,\lambda ^{\prime }}$ is a $G$-module morphism,
in particular it is a $U_{H}$-module morphism. \ Therefore, $\pi
_{\lambda ,\lambda ^{\prime }}((F^{\lambda }_{G})^{U_{H}}\otimes
(F^{\lambda ^{\prime }}_{G})^{U_{H}})\subset (F^{\lambda +\lambda
^{\prime }}_{G})^{U_{H}}$. \ Now note that $\pi _{\lambda ,\lambda
^{\prime }}$ intertwines the $T_{H}$ action on $(F^{\lambda
}_{G})^{U_{H}} \otimes (F^{\lambda ^{\prime }})_{G}^{U_{H}}$ and
$(F^{\lambda +\lambda ^{\prime }}_{G})^{U_{H}} $.\ Let $((F^{\lambda
}_{G})^{U_{H}} \otimes (F^{\lambda ^{\prime }}_{G})^{U_{H}}))(\mu
+\mu ^{\prime })$ be the $\mu +\mu ^{\prime }$ weight space of
$T_{H}$ on $(F^{\lambda }_{G})^{U_{H}} \otimes (F^{\lambda ^{\prime
}}_{G})^{U_{H}})$.  Then $\pi _{\lambda ,\lambda ^{\prime }}$ maps
$((F^{\lambda }_{G})^{U_{H}} \otimes (F^{\lambda ^{\prime
}}_{G})^{U_{H}}))(\mu +\mu ^{\prime })$ to the weight space
$F^{\lambda +\lambda ^{\prime } / \mu +\mu ^{\prime }}$. \ Since
$F^{\lambda / \mu} \otimes F^{\lambda' / \mu'} \subset ((F^{\lambda
}_{G})^{U_{H}} \otimes (F^{\lambda ^{\prime }}_{G})^{U_{H}})(\mu
+\mu ^{\prime })$ the proof is complete.
\end{proof}

By the lemma $\calli{B}(H,G)$ is a $\Lambda_{H} \times
\Lambda_{G}$-graded algebra.  Abusing notation a bit, we denote the
restriction of $\pi _{\lambda ,\lambda ^{\prime }}$ to $F^{\lambda
/\mu} \otimes F^{\lambda' / \mu'}$ also by $\pi _{\lambda ,\lambda
^{\prime }}$. \ This will cause no confusion since we will
explicitly write
\begin{equation*}
F^{\lambda / \mu} \otimes F^{\lambda' / \mu'}\overset{\pi _{\lambda
,\lambda ^{\prime }}}{\rightarrow} F^{\lambda + \lambda' / \mu +
\mu'}
\end{equation*}
when referring to this map, which we call the \textbf{Cartan product
of multiplicity spaces}. In general the Cartan product of
multiplicity spaces is not surjective (see section Proposition
\ref{SpnSurjectivityCorollary}). This observation will be critical.

We can associate to the pair $(H,G)$ the following set:
$$
\Lambda_{\B(H,G)}=\{(\mu,\lambda) \in \Lambda_{H} \times \Lambda_{G}
: F^{\lambda / \mu} \neq \{0\} \}.
$$
\begin{lemma}
\label{branchingsemigrouplemma} The set $\Lambda_{\B(H,G)}$ is a
semigroup under entry-wise addition.
\end{lemma}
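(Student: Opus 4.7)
The plan is to reduce the semigroup property directly to the Cartan product of multiplicity spaces established in the previous lemma, together with the fact that $\mathcal{O}(G)$ is an integral domain.

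First, I would take $(\mu,\lambda)$ and $(\mu',\lambda')$ in $\Lambda_{\mathcal{B}(H,G)}$ and choose nonzero elements $f \in F^{\lambda/\mu}$ and $f' \in F^{\lambda'/\mu'}$. Since these multiplicity spaces are realized inside $\mathcal{R}_G \subset \mathcal{O}(G)$ (as recalled above in the $U_H$-invariant weight space description of $F^{\lambda/\mu}$ in equation (\ref{multspacedef})), the Cartan product can be computed pointwise: $\pi_{\lambda,\lambda'}(f \otimes f') = ff'$, simply multiplying the two regular functions.

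Next, I would invoke Lemma \ref{LemmaGrade}, which tells us that this product $ff'$ lies in $F^{\lambda+\lambda'/\mu+\mu'}$. It therefore suffices to exhibit a nonzero element of that space, and the natural candidate is exactly $ff'$. Nonvanishing is where the classical-group hypothesis enters: $G$ is a connected affine algebraic group, hence an irreducible algebraic variety, so $\mathcal{O}(G)$ is an integral domain. Consequently the product of two nonzero regular functions is again nonzero, giving $ff' \neq 0$.

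I do not see any genuine obstacle; the only point that requires a moment of care is making explicit that Lemma \ref{LemmaGrade} lands inside the correct multiplicity space while the integral domain argument guarantees nonvanishing. Once those two ingredients are lined up, the conclusion $(\mu+\mu',\lambda+\lambda') \in \Lambda_{\mathcal{B}(H,G)}$ is immediate, and closure under entry-wise addition follows. The argument is symmetric in the two factors, so no further verification is needed.
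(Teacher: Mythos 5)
Your proposal is correct and follows essentially the same route as the paper: take nonzero elements of the two multiplicity spaces, use Lemma \ref{LemmaGrade} to see their product lands in $F^{\lambda+\lambda'/\mu+\mu'}$, and conclude the product is nonzero because there are no zero divisors (the paper states this for $\mathcal{B}(H,G)$ directly, while you justify it via the integrality of $\mathcal{O}(G)$ for connected $G$, which is the same point made explicit).
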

\begin{proof}
Suppose $(\mu,\lambda),(\mu',\lambda') \in \Lambda_{\B(H,G)}$.
Choose $0 \neq x \in F^{\lambda / \mu}$ and $0 \neq x' \in
F^{\lambda' / \mu'}$. Since $\calli{B}(H,G)$ has no zero divisors,
$xx' \neq 0$. By the above lemma this implies that $F^{\lambda +
\lambda' / \mu + \mu'} \neq 0$, i.e. $(\mu+\mu',\lambda+\lambda')
\in \Lambda_{\B(H,G)}$. The other semigroup axioms are trivial to
check.
\end{proof}

Thus the $\Lambda_{H} \times \Lambda_{G}$-graded algebra
$\calli{B}(H,G)$ can also be regarded as a
$\Lambda_{\B(H,G)}$-graded algebra, and we call $\Lambda_{\B(H,G)}$
a  \textbf{branching semigroup}.

\subsection{\label{section2.2} Notation}
Henceforth fix an integer $n>1$. Let $\Lambda _{n}$ be the semigroup
of weakly decreasing sequences of length $n$ consisting of
non-negative integers.
For $m \geq 1$, let
$\Lambda_{m,n}=\Lambda_{m} \times \Lambda_{n}$.

The set of dominant weights
for irreducible polynomial representations of $GL_{n}=GL(n,\bbC)$
is identified with $%
\Lambda _{n}$ in the usual way (see e.g. Theorem 5.5.22.,
\cite{GW}). \ For $\lambda \in \Lambda _{n}$, let $V^{\lambda
}=F^{\lambda}_{GL_{n}}$ be the irreducible representation of
$GL_{n}$ with highest weight $\lambda $, which we realize in
$\mathcal{R}_{GL_{n}}$ as described in Section \ref{section2.1}. Let
$v_{\lambda} \in V^{\lambda }$ be the canonical highest weight
vector.

Let $T_{n}$ be the subgroup of
diagonal matrices in $GL_{n}$, $U_{n}$ the subgroup of
upper-triangular unipotent matrices, and $\overline{U}_{n}$ be the subgroup
of lower-triangular unipotent matrices.  Suppose $1\leq m<n$. \ We embed $GL_{m}$ in $%
GL_{n}$ as the subgroup:
\begin{equation*}
\left\{
\begin{bmatrix}
g &  \\
& I_{n-m}%
\end{bmatrix}%
:g\in GL_{m}\right\} \text{.}
\end{equation*}%

For $(\mu,\lambda) \in \Lambda _{m,n}$, define the multiplicity
space $V^{\lambda / \mu }$ as in (\ref{multspacedef}) with
$H=GL_{m}$ and $G=GL_{n}$.
In the case $m=n-1$ these spaces are classically known (see e.g.
Theorem 8.1.1., \cite{GW}): for $(\mu,\lambda) \in \Lambda
_{n-1,n}$,
\begin{eqnarray}
\label{Eq1.1}
\dim V^{\lambda / \mu } &\leq& 1 \\
\label{Eq1.2} V^{\lambda / \mu }\not=\{0\} &\Leftrightarrow& \mu
\textbf{ interlaces } \lambda.
\end{eqnarray}
The interlacing condition, written $\mu < \lambda$, means that $\lambda_{i}\geq \mu_{i} \geq \lambda_{i+1}$
for $i=1,...,n-1$, where $\mu=(\mu_{1},...,\mu_{n-1})$ and $\lambda=(\lambda_{1},...,\lambda_{n})$.

Let $M_{m,n}$ denote the space of $m \times n$ matrices with complex
entries. By $(GL_{n},GL_{n+1})$ duality, the space of polynomials
$\calli{O}(M_{n,n+1})$ decomposes multiplicity free as $GL_{n}
\times GL_{n+1}$ module (see e.g. Theorem 5.6.7.,\cite{GW}):
$$
\calli{O}(M_{n,n+1}) \cong \bigoplus_{\lambda \in
\Lambda_{n}}V^{\lambda^{*}}\otimes V^{\lambda^{+}}.
$$
Here $\lambda^{+} \in \Lambda_{n+1}$ is obtained from $\lambda$ by
adding a zero.  Notice that $V^{\lambda^{*}}$ is an irreducible
representation of $GL_{n}$, while $V^{\lambda^{+}}$ is an
irreducible representation of $GL_{n+1}$.

We will study the algebra of bi-invariants
$$
\calli{B}'=\calli{O}(\overline{U}_{n} \setminus M_{n,n+1}\text{
}/\text{ }U_{n-1}) \cong \bigoplus_{ (\mu,\lambda) \in
\Lambda_{n-1,n}} V^{\lambda^{+} / \mu}.
$$
This algebra is a sort of ``restricted'' branching algebra as it includes only certain
multiplicity spaces that occur in branching from $GL_{n+1}$ to $GL_{n-1}$.

The algebra $\B'$ is an $\Lambda_{n-1,n}$-graded algebra; the
$(\mu,\lambda)$ component is $V^{\lambda^{+} / \mu}$. Moreover, it
is naturally an $SL_{2}$-algebra. Indeed, there is an obvious
$SL_{2} \subset GL_{n+1}$ that commutes with $GL_{n-1}$, and
therefore acts on $\calli{B}'$ by right translation. This action
clearly preserves the graded components of $\calli{B}'$, i.e. the
multiplicity spaces $V^{\lambda^{+} / \mu}$ are naturally
$SL_{2}$-modules.

By Lemma \ref{branchingsemigrouplemma}, the algebra $\calli{B}'$ is
also graded by the semigroup
$$
\Lambda_{\B'}=\{ (\mu,\lambda) \in \Lambda_{n-1,n} : V^{\lambda^{+}
/ \mu} \neq \{0\} \}.
$$
It will be useful for us to have a more concrete realization of this
semigroup.  Suppose $\mu=(\mu_{1},...,\mu_{n-1}) \in \D{n-1}$ and
$\lambda=(\lambda_{1},...,\lambda_{n+1}) \in \D{n+1}$. We say $\mu$
\textbf{double interlaces} $\lambda$, written $\mu \ll \lambda$, if
for $i=1,...,n-1$,
\begin{equation}
\label{Eq1.2.1} \lambda_{i} \geq \mu_{i} \geq \lambda_{i+2}.
\end{equation}
If $(\mu,\lambda) \in \Lambda_{n-1,n}$, then $\mu$ double interlaces
$\lambda$, also written $\mu \ll \lambda$, if $\mu \ll \lambda^{+}$.

Suppose $(\mu,\lambda) \in \Lambda_{n-1,n}$.  It follows that $\mu
\ll \lambda$ if, and only if, there exists $\gamma \in \D{n}$ such
that $\mu < \gamma < \lambda^{+}$ (see Lemma \ref{simplelemma}).
Therefore by (\ref{Eq1.2}),
\begin{equation}
\label{DefE} \Lambda_{\B'}=\{(\mu ,\lambda ) \in \Lambda _{n-1,n} :
\mu \ll \lambda \}.
\end{equation}

\medskip

Next we consider the symplectic groups.  Label a basis for $\mathbb{C}^{2n}$ as $e_{\pm 1},...,e_{\pm n}$ where $%
e_{-i}=e_{2n+1-i}$.  Denote by $s_{n}$ the $n\times n$ matrix with one's on the anti-diagonal and
zeros everywhere else. \ Set
\begin{equation*}
J_{n}=%
\begin{bmatrix}
0 & s_{n} \\
-s_{n} & 0%
\end{bmatrix}%
\end{equation*}%
and consider the skew-symmetric bilinear form $\Omega
_{n}(x,y)=x^{t}J_{n}y$ on $\mathbb{C}^{2n}$. \ Define the symplectic
group relative to this form: $Sp_{2n}=Sp(\bbC^{2n},\Omega)$. In this
realization we can take as a maximal torus $T_{C_{n}} = T_{2n}\cap
Sp_{2n}$, a maximal unipotent subgroup $U_{C_{n}} = U_{2n}\cap
Sp_{2n}$, and its opposite $\overline{U}_{C_{n}} =
\overline{U}_{2n}\cap Sp_{2n}$

Embed $Sp_{2n-2}$ in $Sp_{2n}$ as the subgroup fixing the vectors
$e_{\pm n}$.  Notice that
\begin{equation}
\label{Sp1} \{ g \in Sp_{2n} : ge_{\pm i}=e_{\pm i}\text{ for
}i=1,...,n-1 \}
\end{equation}
is a subgroup isomorphic to $SL_{2}=Sp_{2}$ commuting with $Sp_{2n-2}$.

The set of dominant weights for $Sp_{2n}$ is identified with
$\Lambda_{n}$ (see e.g. Theorem 3.1.20., \cite{GW}). \ For $\lambda
\in \Lambda_{n}$, let $W^{\lambda }=F^{\lambda}_{Sp_{2n}}$
be the irreducible representation of $Sp_{2n}$ with highest weight $%
\lambda $, which we realize in $\mathcal{R}_{Sp_{2n}}$ as described
in section \ref{section2.1}. Let $w_{\lambda} \in W^{\lambda}$ be
the canonical highest weight vector.

For $(\mu,\lambda) \in \Lambda_{n-1,n}$ we define the multiplicity
space $W^{\lambda / \mu}$ as in (\ref{multspacedef}) with
$H=Sp_{2n-2}$ and $G=Sp_{2n}$. Therefore the dimension of
$W^{\lambda / \mu}$ is the multiplicity of the irreducible
representation $W^{\mu}$ of $Sp_{2n-2}$ in the representation
$W^{\lambda}$ of $Sp_{2n}$.  In contrast to the general linear
groups, the branching of the symplectic groups is not
multiplicity-free, i.e. $dim W^{\lambda / \mu}>1$ for generic
$(\mu,\lambda) \in \Lambda_{n-1,n}$ (see Corollary
\ref{CorollaryWY}).

Our main object of study is the following branching algebra:
\begin{eqnarray*}
\calli{B}=\calli{B}(Sp_{2n-2},Sp_{2n}).
\end{eqnarray*}
By Lemma \ref{LemmaGrade}, $\B$ is graded by $\Lambda_{n-1,n}$:
$$
\B=\bigoplus_{(\mu,\lambda) \in \Lambda_{n-1,n}}W^{\lambda / \mu}.
$$
By Lemma \ref{branchingsemigrouplemma}, we also consider $\B$ as
graded over the branching semigroup
$$\Lambda_{\B}=\Lambda(Sp_{2n-2},Sp_{2n}).$$

The branching algebra $\B$ is naturally an $SL_{2}$-module for the
copy of $SL_{2}$ appearing in (\ref{Sp1}). Indeed, this copy
$SL_{2}$ acts on $\B$ by right translation leaving the graded
components $W^{\lambda / \mu}$ invariant.  In other words, the
multiplicity spaces $W^{\lambda / \mu}$ are naturally
$SL_{2}$-modules.  We refer to this action as the ``natural''
$SL_{2}$ action, and denote it simply by $x.b$ for $x \in SL_{2}$
and $b \in \B$.

\medskip
Finally, we consider the group $SL_{2}=SL(2,\bbC)$. \ Let
$F^{k}=\mathcal{O}^{k}(\mathbb{C}^{2})$ be the
$(k+1)^{th}$-dimensional irreducible representation of $SL_{2}$,
realized as the polynomials on $\mathbb{C}^{2}$ of homogeneous
degree $k$.  The $SL_{2}$ action is by right translation. \ For
$k,k^{\prime }\geq 0$ let $\pi _{k,k^{\prime }}:F^{k}\otimes
F^{k^{\prime }}\rightarrow F^{k+k^{\prime }}$ be the usual
multiplication of functions, and define the embedding of
$SL_{2}$-modules, $j_{k,k^{\prime }}:F^{k+k^{\prime }}\rightarrow
F^{k}\otimes F^{k^{\prime }}$, by setting $j_{k,k^{\prime
}}(x_{1}^{k+k^{\prime }})=x_{1}^{k}\otimes x_{1}^{k^{\prime }}$ and
extending by $SL_{2}$-linearity.  Here $x_{1}$ is the first
coordinate function on $\bbC^{2}$.

\begin{remark}
We use the symbols $\pi _{\lambda ,\lambda ^{\prime }}$ and $j _{\lambda ,\lambda ^{\prime }}$ to
denote the Cartan maps for any of the given groups above.  It will be
clear from context which group we have in mind.
\end{remark}

\section{Main Results \label{section1.2}}
In this section we describe in detail the two main theorems of this
paper.

\subsection{An isomorphism of branching algebras}
Our first theorem describes an isomorphism of the branching algebras
$\B$ and $\B'$. This usefulness of this theorem will become clear,
as we use it repeatedly to reduce questions about branching of the
symplectic groups to analogous questions about branching of the
general linear groups.

Define the map $ \psi:Sp_{2n} \rightarrow M_{n,n+1} $, which assigns
an element $g \in Sp_{2n}$ its principal $n \times (n+1)$ submatrix.
Consider the induced map on functions
\begin{equation}
\label{psistar} \psi^{*}:\calli{O}(M_{n,n+1}) \rightarrow
\calli{O}(Sp_{2n}).
\end{equation}
In Lemma \ref{psistarlemma} we show that $\psi^{*}(\B') \subset \B$,
and, moreover, that $\psi^{*}: \B' \rightarrow \B$ is a map of
$\Lambda_{n-1,n}$-graded, $SL_{2}$ algebras. In fact, much more is
true:

\begin{theorem}
\label{TransferThm2}The map $\psi^{*}: \B' \rightarrow \B$ is an isomorphism of $\Lambda_{n-1,n}$-graded, $SL_{2}$-algebras.
\end{theorem}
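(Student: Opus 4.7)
The plan is to establish the theorem by combining injectivity of $\psi^{*}$ with matching of graded dimensions; together these force each component map $\psi^{*}_{\mu,\lambda}: V^{\lambda^{+}/\mu} \to W^{\lambda/\mu}$ to be a bijection. By Lemma \ref{psistarlemma}, $\psi^{*}:\calli{B}' \to \calli{B}$ is already a $\Lambda_{n-1,n}$-graded $SL_{2}$-algebra homomorphism, so only these two ingredients remain.

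For injectivity, I will show that $\psi:Sp_{2n} \to M_{n,n+1}$ is a dominant morphism by proving that its differential at the identity is surjective. Writing $X \in \f{sp}_{2n}$ in $n \times n$ block form as $X = \begin{bmatrix} A & B \\ C & D \end{bmatrix}$, the Lie algebra condition $X^{t} J_{n} + J_{n} X = 0$ forces $D = -s_{n} A^{t} s_{n}$ and makes $s_{n} B$ and $s_{n} C$ symmetric. The symmetry of $s_{n} B$ relates the first column of $B$ only to its $n$-th row, placing no internal constraint on $B_{*,1}$; and $A$ is unconstrained. Since $d\psi_{e}(X)$ is precisely the top-left $n \times (n+1)$ block $\begin{bmatrix} A & B_{*,1} \end{bmatrix}$, the differential surjects onto $M_{n,n+1}$, so $\psi$ is dominant and $\psi^{*}:\calli{O}(M_{n,n+1}) \hookrightarrow \calli{O}(Sp_{2n})$. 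Restriction then yields injectivity of $\psi^{*}:\calli{B}' \to \calli{B}$.

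To match dimensions on each graded piece I will invoke classical branching rules. On the $\calli{B}'$ side, the restriction from $GL_{n+1}$ to $GL_{n-1}$ factors through $GL_{n}$, and since by (\ref{Eq1.1})--(\ref{Eq1.2}) each step is multiplicity-free via interlacing,
\[
\dim V^{\lambda^{+}/\mu} = |\{\gamma \in \Lambda_{n} : \mu < \gamma < \lambda^{+}\}|.
\]
On the $\calli{B}$ side, the classical symplectic branching rule (Zhelobenko's theorem; see for instance \cite{GW}, Theorem 8.1.5) expresses $\dim W^{\lambda/\mu}$ as the number of $\gamma \in \Lambda_{n}$ satisfying precisely those same two interlacings $\mu < \gamma$ and $\gamma < \lambda^{+}$, so the two counts agree.

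Combining injectivity with equality of these finite dimensions on each graded component, each $\psi^{*}_{\mu,\lambda}$ must be a bijection; summing over $(\mu, \lambda)$ yields the desired isomorphism of $\Lambda_{n-1,n}$-graded $SL_{2}$-algebras. The main technical point is the dominance of $\psi$, handled by the differential computation above; the remainder is bookkeeping with standard interlacing data.
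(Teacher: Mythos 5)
Your argument is correct, but it is a genuinely different route from the paper's. The paper never invokes the symplectic branching multiplicity formula: instead it transfers everything to Zhelobenko's realizations on the maximal unipotent subgroups, where Lemma \ref{TransferLemma} shows that the indicator systems cutting out $Z_{\lambda^{+}}(U_{n+1})^{U_{n-1}}$ and $Z_{\lambda}(U_{C_{n}})^{U_{C_{n-1}}}$ are literally identified under $x_{ij}\mapsto u_{ij}$, and Lemmas \ref{MainIndiLemma}--\ref{OtherIndiLemma} upgrade this to a torus- and $SL_{2}$-equivariant identification; the commuting square with the maps $res$ of Proposition \ref{ZheloRemark} then gives the theorem. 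You instead take Lemma \ref{psistarlemma} as given, prove injectivity of $\psi^{*}$ from dominance of $\psi$ (your block computation in $\f{sp}_{2n}$ is right: $A$ is free, $s_{n}B$ symmetric leaves the first column of $B$ free, so $d\psi_{e}$ surjects onto $M_{n,n+1}$ and a surjective differential at a smooth point of the irreducible variety $Sp_{2n}$ gives dominance), and then match dimensions componentwise, using interlacing twice on the $GL$ side and the classical rule of Theorem 8.1.5 of \cite{GW} on the symplectic side; injectivity plus equal finite dimensions on each $(\mu,\lambda)$-component forces bijectivity, and the algebra and $SL_{2}$ structure come for free from Lemma \ref{psistarlemma}. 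This is shorter and logically sound -- there is no circularity, since the branching rule in \cite{GW} is proved independently of this paper -- but note the trade-off: the paper deliberately treats the symplectic branching rule as an \emph{output} of Theorem \ref{TransferThm2} (``we recover a classical result about symplectic branching''), whereas in your proof it becomes an input, so that application would have to be dropped; the Zhelobenko route also exhibits the component isomorphisms explicitly as an identification of solution spaces of identical differential equations, rather than obtaining them abstractly from a dimension count. For the remainder of the paper, which uses Theorem \ref{TransferThm2} only as a black box, your proof would serve equally well.
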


We now describe some applications.

For our first application let $(\mu ,\lambda ) \in \Lambda
_{n-1,n}$.  By the theorem,
$$W^{\lambda / \mu} \neq \{0\} \Leftrightarrow V^{\lambda^{+} / \mu}
\neq \{0\}.$$  Combining this with \eqref{DefE}, we recover a
classical result about symplectic branching (see e.g. Theorem
8.1.5., \cite{GW}):
\begin{equation*}
\label{Eq1.2.2} W^{\lambda / \mu}\neq \{0\} \Leftrightarrow \mu \ll
\lambda,
\end{equation*}
i.e. $\Lambda_{\B}=\Lambda_{\B'}$.

To describe our second application of this theorem, recall first
that the multiplicity spaces $W^{\lambda / \mu}$ are each
$SL_{2}$-modules.  Naturally, one would like to describe the
$SL_{2}$-module structure of these multiplicity spaces. By Theorem
\ref{TransferThm2}, $W^{\lambda / \mu} \cong V^{\lambda^{+} / \mu}$
as $SL_{2}$-modules, so it suffices to answer the analogous question
for the general linear groups.  But this is not too difficult, since
branching from $GL_{n+1}$ to $GL_{n-1}$ factors through $GL_{n}$.

Given $(\mu ,\lambda ) \in \Lambda _{n-1,n+1}$ let $(x_{1} \geq
y_{1} \geq \cdots \geq x_{n} \geq y_{n})$ be the non-increasing
rearrangement of
$(\mu_{1},...,\mu_{n-1},\lambda_{1},...,\lambda_{n+1})$.  Set
$$
r_{i}(\mu,\lambda)=x_{i}-y_{i}.
$$

\begin{proposition}
\label{Prop1.2}Suppose $(\mu,\lambda) \in \Lambda_{n-1,n+1}$, and
$\mu \ll \lambda$.  Then as $SL_{2}$-modules
\begin{equation*}
V^{\lambda / \mu} \cong \bigotimes_{i=1}^{n}F^{r_{i}(\mu ,\lambda )}
\end{equation*}%
where $SL_{2}$ acts by the tensor product representation on the
right hand side.
\end{proposition}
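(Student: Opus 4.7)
The key observation is that branching from $GL_{n+1}$ to $GL_{n-1}$ factors through $GL_{n}$, and each step in this chain is multiplicity-free by (\ref{Eq1.1})--(\ref{Eq1.2}). So $V^{\lambda/\mu} = \mathrm{Hom}_{GL_{n-1}}(V^{\mu}, V^{\lambda})$ decomposes canonically as
\begin{equation*}
V^{\lambda/\mu} = \bigoplus_{\gamma \in \Lambda_{n},\, \mu < \gamma < \lambda} \bbC \cdot v_{\gamma},
\end{equation*}
with $v_{\gamma}$ spanning $\mathrm{Hom}_{GL_{n-1}}(V^{\mu}, V^{\gamma}) \hookrightarrow \mathrm{Hom}_{GL_{n-1}}(V^{\mu}, V^{\lambda})$. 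The plan is to recognize each $v_{\gamma}$ as an $SL_{2}$-weight vector, compute its weight, and then match the resulting character with that of $\bigotimes_{i=1}^{n} F^{r_{i}}$.

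The $SL_{2} \subset GL_{n+1}$ commuting with $GL_{n-1}$ is the copy acting on the last two standard basis vectors, so its maximal torus $T_{2}$ is the restriction of $T_{n+1}$ to its last two coordinates. Factoring $v_{\gamma}$ through $V^{\gamma}$ (via Gelfand--Tsetlin, or directly from the canonical chain $V^{\mu} \hookrightarrow V^{\gamma} \hookrightarrow V^{\lambda}$), the $T_{n+1}$-weight of $v_{\gamma}$ is $(\mu_{1},\ldots,\mu_{n-1},\,|\gamma|-|\mu|,\,|\lambda|-|\gamma|)$, and hence $v_{\gamma}$ has $SL_{2}$-weight $2|\gamma|-|\mu|-|\lambda|$.

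The double interlacing $\mu \ll \lambda$ translates into independent bounds $L_{i} \leq \gamma_{i} \leq U_{i}$, where $L_{i} := \max(\mu_{i},\lambda_{i+1})$ and $U_{i} := \min(\mu_{i-1},\lambda_{i})$ (with $\mu_{0} := +\infty$, $\mu_{n} := -\infty$). The $SL_{2}$-character therefore factors as
\begin{equation*}
\mathrm{ch}_{SL_{2}}(V^{\lambda/\mu}) = t^{2\sum_{i} L_{i} - |\mu| - |\lambda|}\prod_{i=1}^{n}\bigl(1+t^{2}+\cdots+t^{2(U_{i}-L_{i})}\bigr).
\end{equation*}
Two combinatorial facts then complete the calculation: (a) $U_{i} - L_{i} = r_{i}(\mu,\lambda)$, and (b) $\sum_{i} L_{i} + \sum_{i} U_{i} = |\mu| + |\lambda|$. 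Granted these, the character simplifies to $\prod_{i} (t^{r_{i}}+t^{r_{i}-2}+\cdots+t^{-r_{i}}) = \mathrm{ch}_{SL_{2}}\bigl(\bigotimes_{i=1}^{n} F^{r_{i}}\bigr)$, and since $SL_{2}$-representations are determined by their characters, the isomorphism follows.

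The main obstacle is (a), which I would prove by showing that the length-$2n$ sequence $(U_{1},L_{1},U_{2},L_{2},\ldots,U_{n},L_{n})$ is precisely the non-increasing rearrangement $(x_{1},y_{1},\ldots,x_{n},y_{n})$. The inequality $L_{i} \geq U_{i+1}$ is immediate from $\max(a,b) \geq \min(a,b)$, while $U_{i} \geq L_{i}$ unpacks into the four inequalities $\mu_{i-1} \geq \mu_{i}$, $\mu_{i-1} \geq \lambda_{i+1}$, $\lambda_{i} \geq \mu_{i}$, $\lambda_{i} \geq \lambda_{i+1}$ guaranteed by $\mu \ll \lambda$. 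A pigeonhole count then matches the multiset $\{L_{i}, U_{i}\}_{i=1}^{n}$ with $\{\mu_{1},\ldots,\mu_{n-1},\lambda_{1},\ldots,\lambda_{n+1}\}$, proving (a); fact (b) drops out of the same pairing via $\max(a,b) + \min(a,b) = a+b$.
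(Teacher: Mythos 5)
Your proposal is correct and follows essentially the same route as the paper: decompose $V^{\lambda/\mu}$ through the intermediate multiplicity-free $GL_{n}$ step into one-dimensional pieces indexed by $\gamma$ with $\mu<\gamma<\lambda$, compute the $SL_{2}$-weight $2|\gamma|-|\mu|-|\lambda|$ of each piece, and match the resulting character with $\prod_{i}\mathrm{ch}(F^{r_{i}})$ using the independence of the bounds on the $\gamma_{i}$. Your facts (a) and (b) are exactly the paper's explicit description of the rearrangement $f(\mu,\lambda)=(\lambda_{1},\max(\mu_{1},\lambda_{2}),\min(\mu_{1},\lambda_{2}),\ldots,\lambda_{n+1})$ together with Lemma \ref{simplelemma}, so no genuinely new ingredient is involved.
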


As a corollary of Theorem \ref{TransferThm2} and Proposition
\ref{Prop1.2} we can describe the $SL_{2}$-module structure of the
multiplicity spaces $W^{\lambda / \mu}$.

\begin{corollary}
\label{CorollaryWY}Suppose $(\mu,\lambda) \in \Lambda_{\B}$.  Then
as $SL_{2}$-modules
\begin{equation*}
W^{\lambda / \mu} \cong \bigotimes_{i=1}^{n}F^{r_{i}(\mu
,\lambda^{+} )},
\end{equation*}%
where $SL_{2}$ acts by the tensor product representation on the right hand side.
\end{corollary}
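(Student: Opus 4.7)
The plan is to chain Theorem \ref{TransferThm2} with Proposition \ref{Prop1.2}; this is essentially a one-line deduction, and no new ideas are needed. The main point to verify is that the transfer map respects all the structure we need.

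First I would fix $(\mu,\lambda) \in \Lambda_{\B}$ and invoke Theorem \ref{TransferThm2}. Since $\psi^{*} : \B' \to \B$ is an isomorphism of $\Lambda_{n-1,n}$-graded $SL_{2}$-algebras, it restricts to an $SL_{2}$-equivariant linear isomorphism on the $(\mu,\lambda)$-graded component, giving
\begin{equation*}
V^{\lambda^{+} / \mu} \cong W^{\lambda / \mu}
\end{equation*}
as $SL_{2}$-modules. Here I should note that, by the first application of Theorem \ref{TransferThm2} discussed just after its statement, the hypothesis $(\mu,\lambda) \in \Lambda_{\B}$ is equivalent to $(\mu,\lambda) \in \Lambda_{\B'}$, equivalently $\mu \ll \lambda$ (in the sense that $\mu \ll \lambda^{+}$ as sequences in $\Lambda_{n-1}$ and $\Lambda_{n+1}$).

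Next I would apply Proposition \ref{Prop1.2} with the pair $(\mu,\lambda^{+}) \in \Lambda_{n-1,n+1}$. The double interlacing hypothesis $\mu \ll \lambda^{+}$ holds by the previous paragraph, so Proposition \ref{Prop1.2} yields
\begin{equation*}
V^{\lambda^{+} / \mu} \cong \bigotimes_{i=1}^{n} F^{r_{i}(\mu,\lambda^{+})}
\end{equation*}
as $SL_{2}$-modules. Composing with the isomorphism from the previous step gives the desired description of $W^{\lambda/\mu}$.

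The only subtlety worth pausing on is that the $SL_{2}$ appearing in Theorem \ref{TransferThm2} on the $\B$ side is the copy from \eqref{Sp1} (the ``natural'' symplectic $SL_{2}$), while on the $\B'$ side it is the $SL_{2} \subset GL_{n+1}$ commuting with $GL_{n-1}$; so I would remind the reader that both actions are simply right translation and that the map $\psi$ is defined by taking a principal submatrix, so compatibility of the two $SL_{2}$'s under $\psi^{*}$ is already built into the statement of Theorem \ref{TransferThm2}. Beyond that, there is no real obstacle: the whole point of Theorem \ref{TransferThm2} was to make this reduction possible, and the work is absorbed into Proposition \ref{Prop1.2}.
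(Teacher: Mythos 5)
Your proposal is correct and follows exactly the route the paper intends: the corollary is obtained by restricting the graded $SL_{2}$-algebra isomorphism of Theorem \ref{TransferThm2} to the $(\mu,\lambda)$-component to get $W^{\lambda/\mu}\cong V^{\lambda^{+}/\mu}$, noting $\Lambda_{\B}=\Lambda_{\B'}$ so that $\mu\ll\lambda^{+}$, and then applying Proposition \ref{Prop1.2} to the pair $(\mu,\lambda^{+})$. Your remark about the compatibility of the two copies of $SL_{2}$ being built into the statement of Theorem \ref{TransferThm2} is also consistent with the paper.
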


The above corollary first appeared explicitly as Theorem 3.3,
\cite{WY}, where it was proved using the combinatorics of partition
functions. It can also be obtained using Theorem 5.2, \cite{M2},
where it is shown that $W^{\lambda / \mu}$ carries an irreducible
action of a certain infinite dimensional Hopf algebra called the
twisted Yangian.

\subsection{A resolution of multiplicities}
We now describe the second theorem of this paper. Define
$L=\prod_{i=1}^{n}SL_{2}$.  For $(\mu,\lambda) \in \Lambda_{n-1,n}$
consider the irreducible $L$-module
\begin{equation*}
\label{A} A^{\lambda / \mu} = \bigotimes_{i=1}^{n}F^{r_{i}(\mu
,\lambda^{+} )}.
\end{equation*}
Corollary \ref{CorollaryWY} states that for all $(\mu,\lambda) \in
\Lambda_{\B}$, $W^{\lambda / \mu} \cong Res_{SL_{2}}^{L}A^{\lambda /
\mu}$,
where $SL_{2} \subset L$ is the diagonal subgroup. We therefore ask,
is there a canonical action of $L$ on $W^{\lambda / \mu}$ such that
$W^{\lambda / \mu}\cong A^{\lambda / \mu}$ as $L$-modules?

Remarkably, the answer to this question is yes! We will construct a
canonical action of $L$ on $\B$ that will be uniquely determined by
two properties, the first of which is that each multiplicity space
$W^{\lambda / \mu}$ is isomorphic to $A^{\lambda / \mu}$ as an
$L$-module.  Moreover, the restriction of this action to the
diagonally embedded $SL_{2} \subset L$ recovers the natural action
of $SL_{2}$ on $\B$.

We warn the reader that $L$ is not the product of $SL_{2}$'s that
lives in $Sp_{2n}$. Indeed, the latter product of $SL_{2}$'s does
not act on the multiplicity spaces.  The existence of this
$L$-action is more subtle, and can only be ``seen'' by considering
all multiplicity spaces together, i.e. by considering the branching
algebra.

To describe the action of $L$ on $\B$ precisely we investigate the
double interlacing condition that characterizes branching of the
symplectic groups. Notice that the inequality
\begin{equation*}
\lambda_{i} \geq \mu_{i} \geq \lambda_{i+2}
\end{equation*}
does not constrain the relation between $\mu_{i}$ and $\lambda_{i+1}$.  In other words, we can have either $\mu_{i} \geq \lambda_{i+1}$, or $\mu_{i} \leq \lambda_{i+1}$, or both.  This motivates the following:
\begin{definition}
\label{ordertype}
An \textbf{order type} $\sigma$ is a word in the alphabet $\{\geq,\leq\}$ of length $n-1$.
\end{definition}
Suppose $(\mu,\lambda) \in \Lambda_{\B}$ and $\sigma = (\sigma_{1}
\cdots \sigma_{n-1})$ is an order type.  Then we say $(\mu,\lambda)$
is \textbf{of order type $\sigma$} if for $i=1,...,n-1$,
\begin{equation*}
\left\{
\begin{array}{rl}
\sigma_{i} = \text{``}\geq\text{''} \Longrightarrow \mu_{i} \geq \lambda_{i+1} \\
\sigma_{i} = \text{``}\leq\text{''} \Longrightarrow \mu_{i} \leq \lambda_{i+1}
\end{array} \right.
\end{equation*}
For example, consider the double interlacing pair $(\mu,\lambda)$, where $\lambda = (3,2,1)$ and $\mu = (3,0)$.  Since $\mu_{1}  \geq \lambda_{2}$ and $\mu_{2} \leq \lambda_{3}$, the pair $(\mu, \lambda)$ is of order type $\sigma = (\geq\leq)$.

Let $\Sigma$ be the set of order types, and for each $\sigma \in \Sigma$ set
\begin{equation*}
\Lambda_{\B}(\sigma) = \{(\mu,\lambda) \in \Lambda_{\B} :
(\mu,\lambda) \text{ is of order type } \sigma \}.
\end{equation*}
It's easy to check that $\Lambda_{\B}(\sigma)$ is a sub-semigroup of
$\Lambda_{\B}$.  Therefore
\begin{equation*}
\label{M} \B_{\sigma} = \bigoplus_{(\mu,\lambda) \in
\Lambda_{\B}(\sigma)} W^{\lambda / \mu}
\end{equation*}
is a subalgebra of $\B$.  Moreover, $\B_{\sigma}$ is
$SL_{2}$-invariant.  We now have all the ingredients to state our
second theorem.

\begin{theorem}
\label{ExtThm2}There is a unique representation $(\Phi,\calli{B})$
of $L$ satisfying the following two properties:
\begin{enumerate}
\item for all $(\mu,\lambda) \in \Lambda_{\B}$, $W^{\lambda / \mu}$ is an irreducible $L$-invariant subspace of $\calli{B}$ isomorphic to $A^{\lambda / \mu}$, and
\item for all $\sigma \in \Sigma$, $L$ acts as algebra automorphisms on $\calli{B}_{\sigma}$.
\end{enumerate}
Moreover, $Res_{SL_{2}}^{L}(\Phi)$ recovers the natural action of
$SL_{2}$ on $\calli{B}$.
\end{theorem}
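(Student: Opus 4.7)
The plan is to transfer everything to the branching algebra $\B'$ via Theorem~\ref{TransferThm2}, build the $L$-action subalgebra-by-subalgebra (one per order type), and then glue. For each $\sigma\in\Sigma$ I would construct an isomorphism $\B'_{\sigma}\cong\calli{O}(V_{\sigma})$ for an explicit $L$-module $V_{\sigma}$, transport the obvious $L$-action on $\calli{O}(V_{\sigma})$ to $\B_{\sigma}$ via $\psi^{*}$, and check compatibility on overlaps.

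The combinatorial input is the semigroup $\Lambda_{\B'}(\sigma)$. Fixing $\sigma$ turns the double-interlacing inequalities into a single totally-ordered chain threading the entries of $\mu$ among those of $\lambda^{+}$; reading off successive gaps identifies $\Lambda_{\B'}(\sigma)$ with a free abelian monoid on an explicit set of generators $g_{1},\ldots,g_{N}$. For each generator $g$, Proposition~\ref{Prop1.2} describes $V^{g}$ as an irreducible representation of a single factor of $L$ (trivial on the others), so $V_{\sigma}=\bigoplus_{i} V^{g_{i}}$ has a natural $L$-module structure.

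The central step is to prove $\B'_{\sigma}\cong\calli{O}(V_{\sigma})$ as $\Lambda_{n-1,n}$-graded algebras. This reduces to showing that Cartan products of multiplicity spaces \emph{within} $\B'_{\sigma}$ are surjective, and that they factor through symmetric powers when the same generator is repeated; the latter is automatic from commutativity, while the former I would verify by comparing $\dim V^{\lambda^{+}/\mu}=\prod_{i}(r_{i}(\mu,\lambda^{+})+1)$ (Proposition~\ref{Prop1.2}) against the dimension of the appropriate graded piece of $\calli{O}(V_{\sigma})$. This is the main obstacle: within $\B'$ the Cartan product is \emph{not} surjective in general, and restriction to a fixed $\sigma$ is precisely what cures this.

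Once each $\B'_{\sigma}$ carries its canonical $L$-action by algebra automorphisms, I would glue. The intersection $\B'_{\sigma}\cap\B'_{\sigma'}$ is graded by $\Lambda_{\B'}(\sigma)\cap\Lambda_{\B'}(\sigma')$ and is generated by the $V^{g}$ for $g$ in a common generating set; since the $L$-module structure on each $V^{g}$ was defined intrinsically via Proposition~\ref{Prop1.2} and not through the choice of $\sigma$, the two actions agree on generators and hence on the entire intersection. Transporting via $\psi^{*}$ yields a global action $\Phi$ on $\B$. Property~(1) then follows because $W^{\lambda/\mu}$ realizes as a tensor product of symmetric powers of generator spaces, which is $L$-irreducible of type $A^{\lambda/\mu}$; property~(2) is built in. For the diagonal restriction, $\psi^{*}$ is $SL_{2}$-equivariant and on each $V^{g}$ the diagonal $SL_{2}\subset L$ acts through the one $SL_{2}$-factor carrying $V^{g}$, matching the natural $SL_{2}$-action of (\ref{Sp1}). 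For uniqueness, any $L$-action on $\B_{\sigma}$ by algebra automorphisms is determined by its restriction to $V_{\sigma}$; two restrictions satisfying (1) differ on each irreducible summand by a scalar by Schur, and compatibility across order types on overlaps forces these scalars to coincide with our choice.
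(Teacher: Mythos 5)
Your overall architecture is close to the paper's: reduce to the general linear setting via Theorem~\ref{TransferThm2}, use the fact that each $\Lambda_{\B}(\sigma)$ is a free semigroup (Lemma~\ref{semigroupisom}) together with Proposition~\ref{Prop1.2} to identify $\B_{\sigma}$ with a polynomial algebra $\calli{O}(V)$ on explicit generator spaces (Lemma~\ref{OVlemma}, Proposition~\ref{CorIsomMtoA}), transport the $L$-action, and glue over order types. But there is a real gap at the step you yourself flag as the main obstacle: you propose to prove that Cartan products of multiplicity spaces within a fixed order type are surjective ``by comparing dimensions.'' No dimension comparison can do this. Knowing $\dim V^{\lambda^{+}/\mu}=\prod_i(r_i(\mu,\lambda^{+})+1)$ and that the corresponding graded piece of $\calli{O}(V_{\sigma})$ has the same dimension tells you nothing about the image of the multiplication map; the dimension count only upgrades surjectivity to bijectivity once surjectivity is known. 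Surjectivity is the paper's central technical result (Proposition~\ref{SpnSurjectivityCorollary}, proved through Proposition~\ref{SurjectivityLemmaGL}), and its proof requires genuine work: the peeling lemma~\ref{peelinglemma} for the semigroup, the projection lemma~\ref{projection lemma} (comparing three graded algebras and using that $\calli{R}_{GL_{n}}$, $\calli{R}_{GL_{n+1}}$ are domains), and an induction over the dominance order on $GL_{n}$-isotypic components. Without an argument of this kind your isomorphism $\B_{\sigma}\cong\calli{O}(V_{\sigma})$, and hence the whole construction, is unsupported.

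The uniqueness sketch also does not work as stated. Two $L$-module structures on the same two-dimensional generator space $W^{g}$, each abstractly isomorphic to $A^{g}$, do not ``differ by a scalar'': Schur's lemma says the intertwiner between them is unique up to scalar, so they are conjugate, not equal, and conditions (1)--(2) applied to generators alone do not pin the action down the way you claim. (For the same reason, the canonicity of your generator actions in the gluing step should be argued via Schur with respect to the \emph{natural} $SL_{2}$-action, under which each two-dimensional $W^{g}$ is irreducible, rather than ``intrinsically via Proposition~\ref{Prop1.2},'' which only gives an isomorphism class.) To get uniqueness you must either bring the diagonal-restriction property into play to rigidify the action on the generator spaces, or argue as the paper does: given any $(\Phi,\B)$ satisfying (1) and (2), build a compatible family of isomorphisms adapted to $\Phi$ and run the inductive diagram chase of Proposition~\ref{BigProp} (using surjectivity at every stage, and Lemmas~\ref{peel} and~\ref{cubic} to handle decompositions and cross-order-type consistency), concluding $\Phi=\Phi_{\mathcal{F}}$ because the transported action is insensitive to the scalar ambiguities. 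Your gluing idea --- agreement on common generator spaces plus generation --- is a reasonable substitute for the paper's property (iv), but it, too, rests on the unproven surjectivity.
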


We will now give an overview of the proof of this theorem.  From the statement of Theorem \ref{ExtThm2} it's clear that the
subalgebras $\B_{\sigma}$ are intrinsic to the definition of the
representation $(\Phi,\B)$ of $L$.  Therefore it is not surprising that
the proof requires a thorough understanding of these subalgebras.

To investigate the subalgebras $\B_{\sigma}$ we define the
$L$-module $(\theta_{\sigma},\calli{A}_{\sigma})$, where
\begin{equation}
\label{defAsigma} \mathcal{A}_{\sigma
}=\dbigoplus\limits_{(\mu,\lambda) \in
\Lambda_{\B}(\sigma)}A^{\lambda / \mu}.
\end{equation}
The crucial observation is that since we are restricting to a fixed
order type there is a natural product on $\calli{A}_{\sigma}$.
The  multiplication
\begin{equation*}
A^{\lambda / \mu}\otimes A^{\lambda' / \mu'}\rightarrow A^{\lambda+
\lambda' / \mu + \mu'}
\end{equation*}
is given by Cartan product of irreducible $L$-modules, which induces
an algebra structure on $\mathcal{A}_{\sigma}$ (cf. Lemma
\ref{Lemma5.1}).  The algebra $\mathcal{A}_{\sigma}$ is a naturally
occurring $L$-algebra.  Indeed, in Lemma \ref{OVlemma} we show that
$\mathcal{A}_{\sigma}$ is isomorphic as a graded $L$-algebra to the
ring of polynomials $\mathcal{O}(V)$ on a certain $L$-module $V$.

%

A priori the algebras $\calli{B}_{\sigma}$ and $\calli{A}_{\sigma}$
seem to be quite different.  Indeed, the product maps $A^{\lambda /
\mu}\otimes A^{\lambda' / \mu'}\rightarrow A^{\lambda+ \lambda' /
\mu + \mu'}$ in $\calli{A}_{\sigma}$ are surjective, while the
Cartan product of multiplicity spaces need not be surjective.  For
example, consider the case $\lambda =\lambda ^{\prime }=(2,1,0)$,
$\mu =(2,0)$, and $\mu ^{\prime }=(0,0)$.  By Corollary
\ref{CorollaryWY}, $\dim W^{\lambda / \mu}=\dim W^{\lambda' /
\mu'}=2$, and $\dim W^{\lambda +\lambda ^{\prime } / \mu +\mu
^{\prime }}=9$. \ Therefore the product map cannot be surjective in
this case.

Notice that in the above example $(\mu ,\lambda )$ and $(\mu
^{\prime },\lambda ^{\prime })$ do not satisfy a common order type.
\ An important result for us is that if the multiplicity spaces do
satisfy a common order type, then their product is surjective. This
will be our most crucial application of Theorem \ref{TransferThm2}.
\begin{proposition}
\label{SpnSurjectivityCorollary}Let $\sigma \in \Sigma $ and let
$(\mu ,\lambda ),(\mu ^{\prime },\lambda ^{\prime })\in
\Lambda_{\B}(\sigma)$. \ Then the
map%
\begin{equation*}
W^{\lambda / \mu}\otimes W^{\lambda' / \mu'}\overset{\pi _{\lambda ,\lambda ^{\prime }}}{\rightarrow }%
W^{\lambda+\lambda' / \mu+\mu'}
\end{equation*}%
is surjective.
\end{proposition}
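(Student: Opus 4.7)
The plan is to transfer the statement via Theorem~\ref{TransferThm2} to the branching algebra $\B'$, where the factorization of $GL_{n-1}\subset GL_{n+1}$ through $GL_n$ supplies a canonical basis of each multiplicity space and reduces the problem to a combinatorial analysis plus a leading-term computation.

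By Theorem~\ref{TransferThm2}, $\psi^{*}\colon \B' \to \B$ intertwines the Cartan products, so it suffices to establish the analogue in $\B'$: for $(\mu,\lambda),(\mu',\lambda') \in \Lambda_\B(\sigma)$ the map
\begin{equation*}
V^{\lambda^+/\mu} \otimes V^{(\lambda')^+/\mu'} \xrightarrow{\;\pi_{\lambda^+,(\lambda')^+}\;} V^{(\lambda+\lambda')^+/(\mu+\mu')}
\end{equation*}
is surjective. Since each step of $GL_{n-1}\subset GL_n\subset GL_{n+1}$ has multiplicity-free branching, the space $V^{\lambda^+/\mu}$ has a canonical (up to scalars) basis $\{e_\gamma\}_{\gamma\in I(\mu,\lambda)}$ indexed by the set of intermediate weights $I(\mu,\lambda)=\{\gamma\in\D{n} : \mu<\gamma<\lambda^+\}$, where $e_\gamma$ is obtained by composing the unique up-to-scalar embeddings $V^\mu\hookrightarrow V^\gamma\hookrightarrow V^{\lambda^+}$ and applying them to $v_\mu$.

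The order-type hypothesis is used to simplify the combinatorics of $I(\mu,\lambda)$. With appropriate boundary conventions, the general bounds $\gamma_i \in [\max(\mu_i,\lambda_{i+1}^+),\,\min(\mu_{i-1},\lambda_i^+)]$ have their maxima and minima resolved by $\sigma$ into single entries, so each $\gamma_i$ varies independently over an interval $[\alpha_i,\beta_i]$. Consequently $I(\mu,\lambda)=\prod_{i=1}^n [\alpha_i,\beta_i]$ is a box, and for two pairs in $\Lambda_\B(\sigma)$ these bounds add, giving the Minkowski-sum identity
\begin{equation*}
I(\mu+\mu',\lambda+\lambda') = I(\mu,\lambda) + I(\mu',\lambda').
\end{equation*}
Every label $\gamma''$ of a basis vector in the target therefore admits a decomposition $\gamma''=\gamma+\gamma'$ with $\gamma\in I(\mu,\lambda)$ and $\gamma'\in I(\mu',\lambda')$.

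The remaining step is to upgrade this combinatorial decomposition to linear algebra: show that with respect to some ordering $\prec$ on $\D{n}$ (lexicographic should suffice), the Cartan product is unitriangular,
\begin{equation*}
e_\gamma\cdot e_{\gamma'} = c_{\gamma,\gamma'}\, e_{\gamma+\gamma'} + \sum_{\gamma''\prec \gamma+\gamma'} c^{\gamma''}_{\gamma,\gamma'}\, e_{\gamma''}, \qquad c_{\gamma,\gamma'}\neq 0.
\end{equation*}
Combined with the Minkowski identity this forces the products $e_\gamma\cdot e_{\gamma'}$ to span $V^{(\lambda+\lambda')^+/(\mu+\mu')}$, giving surjectivity. \textbf{The main obstacle} is verifying the nonvanishing of the leading coefficient $c_{\gamma,\gamma'}$. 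I would address this by realizing $e_\gamma$ concretely, for example via Plücker-type expressions in $\calli{O}(M_{n,n+1})$ under the isomorphism $\psi^{*}$, and then invoking the classical surjectivity of the Cartan product $V^\gamma\otimes V^{\gamma'}\to V^{\gamma+\gamma'}$ at the $GL_n$-level to propagate the nonvanishing through the embedding $GL_n\hookrightarrow GL_{n+1}$.
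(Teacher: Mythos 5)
Your overall strategy is the same as the paper's: reduce to the general linear case via Theorem \ref{TransferThm2} (the paper's Proposition \ref{SurjectivityLemmaGL}), decompose each multiplicity space into one-dimensional pieces indexed by intermediate weights $\gamma$ with $\mu<\gamma<\lambda^{+}$, use the common order type to show that every label of the target splits as $\gamma''=\gamma+\gamma'$ (your Minkowski-sum identity is exactly the content of Lemmas \ref{simplelemma}, \ref{semigroupisom} and \ref{peelinglemma}), and then conclude by a triangularity argument with nonzero leading term (the paper runs this as an induction on $\left\Vert \gamma \right\Vert$ with respect to the dominance order). So the skeleton is sound and faithful to the paper.

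The genuine gap is the step you yourself flag as the main obstacle: the nonvanishing of the leading coefficient $c_{\gamma,\gamma'}$. This is precisely the paper's Lemma \ref{projection lemma} (that $p^{\lambda+\lambda'}_{\nu+\nu'}(\pi_{\lambda,\lambda'}(x\otimes x'))\neq 0$ for nonzero $x\in V^{\lambda}[\nu]$, $x'\in V^{\lambda'}[\nu']$), and the route you sketch does not deliver it. The classical surjectivity of the Cartan product $V^{\gamma}\otimes V^{\gamma'}\rightarrow V^{\gamma+\gamma'}$ is a statement about the $GL_{n}$-level multiplication of whole modules, whereas what you need is that one specific vector, $\pi_{\lambda^{+},(\lambda')^{+}}(e_{\gamma}\otimes e_{\gamma'})$ formed with the $GL_{n+1}$-level Cartan product, has nonzero projection onto the $(\gamma+\gamma')$-isotypic component of $V^{(\lambda+\lambda')^{+}}$ restricted to $GL_{n}$; surjectivity at the $GL_{n}$ level says nothing about whether this particular vector survives that projection, since a priori the product could live entirely in lower isotypic components. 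The paper closes this by a genuinely nontrivial argument: it transports the question through product-preserving identifications $V^{\lambda}[\gamma]\cong V^{\gamma}\otimes Hom_{GL_{n}}(V^{\gamma},V^{\lambda})\cong V^{\gamma}\otimes (V^{\lambda})^{U_{n}}(\gamma)$, where the product becomes a product of simple tensors computed inside $\mathcal{R}_{GL_{n}}$ and $\mathcal{R}_{GL_{n+1}}$, and then uses that these rings have no zero divisors (alternatively this follows from Vinberg's theorem \cite{Vin}). Note also that this leading-term nonvanishing holds for arbitrary pairs, with no order-type hypothesis; the order type is needed only for the combinatorial Minkowski-sum step. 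So all the analytic difficulty of the proposition is concentrated exactly in the step you have deferred, and until it is carried out (your proposed Pl\"ucker computation would have to be done explicitly, or replaced by an argument of the above type), the proof is incomplete.
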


By Proposition \ref{CorollaryWY}, $\calli{B}_{\sigma}$ and
$\calli{A}_{\sigma}$ are isomorphic as $SL_{2}$ modules. The above
proposition shows, moreover, that their products behave similarly.
In fact, we have the following theorem:

\begin{proposition}
\label{CorIsomMtoA} Let $\sigma \in \Sigma$. Then, $ $
\begin{enumerate}
\item There is an isomorphism of graded $SL_{2}$-algebras, $\phi_{\sigma}:\calli{B}_{\sigma} \rightarrow \calli{A}_{\sigma}$, which
is unique up to scalars.
\item By part (1) we can transfer the action of
$L$ on $\calli{A}_{\sigma}$ to $\calli{B}_{\sigma}$, and the
resulting representation, $(\Phi_{\sigma},\B_{\sigma})$, of $L$ is
canonical, i.e. independent of the choice of scalars.
\end{enumerate}
\end{proposition}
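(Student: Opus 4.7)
The plan is to realize both $\B_{\sigma}$ and $\A_{\sigma}$ as polynomial (free commutative) algebras on matching sets of generators, after which the isomorphism $\phi_{\sigma}$ can be built by choosing an $SL_{2}$-equivariant map on generators and extending multiplicatively. By Lemma \ref{OVlemma}, $\A_{\sigma} \cong \calli{O}(V)$ as a graded $L$-algebra, so $\A_{\sigma}$ is already manifestly free commutative on the degree-one piece $V^{*}$. I would first identify a finite set $P(\sigma) \subset \Lambda_{\B}(\sigma)$ of semigroup-primitive indices so that $V^{*} \cong \bigoplus_{\tau \in P(\sigma)} A^{\lambda_{\tau}/\mu_{\tau}}$, with each primitive summand irreducible as an $L$-module and, by the combinatorial analysis of order types, also irreducible as an $SL_{2}$-module at primitive indices.

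The central step is to prove the analogous freeness for $\B_{\sigma}$. Proposition \ref{SpnSurjectivityCorollary} provides the crucial input: for any $(\mu,\lambda), (\mu',\lambda') \in \Lambda_{\B}(\sigma)$, the Cartan product $W^{\lambda/\mu} \otimes W^{\lambda'/\mu'} \to W^{\lambda+\lambda'/\mu+\mu'}$ is surjective. Iterating, $\B_{\sigma}$ is generated as a $\bbC$-algebra by its primitive pieces $\bigoplus_{\tau \in P(\sigma)} W^{\lambda_{\tau}/\mu_{\tau}}$, so the natural graded algebra map from the symmetric algebra on these primitive pieces onto $\B_{\sigma}$ is surjective. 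By Corollary \ref{CorollaryWY}, $\dim W^{\lambda/\mu} = \dim A^{\lambda/\mu}$ for every $(\mu,\lambda) \in \Lambda_{\B}(\sigma)$, so $\B_{\sigma}$ and $\A_{\sigma}$ have identical $\Lambda_{n-1,n}$-graded Hilbert series. This forces the surjection to be an isomorphism, and hence $\B_{\sigma}$ is itself a polynomial algebra on $\bigoplus_{\tau \in P(\sigma)} W^{\lambda_{\tau}/\mu_{\tau}}$. This Hilbert-series/freeness argument, which hinges on Proposition \ref{SpnSurjectivityCorollary} and hence on Theorem \ref{TransferThm2}, is the main obstacle.

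For part (1), with both algebras presented as polynomial rings on matching $SL_{2}$-modules, I would choose for each $\tau \in P(\sigma)$ an $SL_{2}$-equivariant linear isomorphism $\phi_{\tau} : W^{\lambda_{\tau}/\mu_{\tau}} \to A^{\lambda_{\tau}/\mu_{\tau}}$; by Schur's lemma applied to the irreducible primitive pieces, $\phi_{\tau}$ is unique up to a scalar in $\bbC^{*}$. The universal property of the polynomial algebra $\B_{\sigma}$ then extends $\{\phi_{\tau}\}_{\tau \in P(\sigma)}$ uniquely to a graded $SL_{2}$-equivariant algebra isomorphism $\phi_{\sigma} : \B_{\sigma} \to \A_{\sigma}$, with total ambiguity the torus $(\bbC^{*})^{P(\sigma)}$ of scalars on primitive generators. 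For part (2), any two such choices $\phi_{\sigma}, \phi_{\sigma}'$ differ by rescaling each irreducible $L$-summand $A^{\lambda_{\tau}/\mu_{\tau}} \subset V^{*}$ by a scalar $c_{\tau} \in \bbC^{*}$. Each such rescaling is $L$-equivariant and so extends to an $L$-equivariant algebra automorphism of $\calli{O}(V) = \A_{\sigma}$; consequently, pulling back the $L$-action on $\A_{\sigma}$ through $\phi_{\sigma}$ or through $\phi_{\sigma}'$ yields the same representation of $L$ on $\B_{\sigma}$, so $(\Phi_{\sigma}, \B_{\sigma})$ is canonical.
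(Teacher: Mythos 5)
Your proposal is correct, but it takes a genuinely different route from the paper. The paper never presents $\B_{\sigma}$ as a polynomial ring; instead it constructs, by induction on the filtration $\Lambda_{\B}(m)=\{p:p_{\max}\leq m\}$, a \emph{compatible family} of $SL_{2}$-isomorphisms $\phi_{p}:W^{p}\rightarrow A^{p}$ for \emph{all} $p\in\Lambda_{\B}$ at once (Proposition \ref{BigProp}), using Lemmas \ref{peel}, \ref{megapeel}, \ref{kerlemma} and \ref{cubic} together with diagram chases, and then defines $\phi_{\sigma}$ componentwise. You instead exploit the freeness of the semigroup $\Lambda_{\B}(\sigma)$ (this is exactly what Lemma \ref{Lemma5.1} supplies: $h_{\sigma}$ identifies it with the free semigroup on $\varpi_{1},\dots,\varpi_{2n-1}$, so your set $P(\sigma)$ and the splitting of every non-primitive index do exist), prove generation by the primitive components via iterated use of Proposition \ref{SpnSurjectivityCorollary}, and kill the kernel of $\mathrm{Sym}\bigl(\bigoplus_{\tau}W^{p_{\tau}}\bigr)\twoheadrightarrow\B_{\sigma}$ by a graded dimension count; here the count you actually need is between $\mathrm{Sym}$ of the primitives and $\B_{\sigma}$, which works because $\A_{\sigma}\cong\calli{O}(V)$ is itself free on its primitives (Lemma \ref{OVlemma}) and Corollary \ref{CorollaryWY} matches all graded dimensions — worth one explicit sentence, but not a gap. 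After that, Schur's lemma on the (irreducible, two- or one-dimensional) primitive $SL_{2}$-pieces and the universal property give existence and uniqueness up to scalars, and your observation that the ambiguity is an $L$-equivariant rescaling automorphism of $\calli{O}(V)$ gives part (2), matching the paper's scalar computation. What your argument buys is a cleaner, more structural proof of Proposition \ref{CorIsomMtoA}, essentially showing directly that $\B_{\sigma}$ is a polynomial algebra; what the paper's heavier inductive construction buys is that the family $\{\phi_{p}\}$ is compatible \emph{simultaneously for all order types}, which is precisely what is needed later to glue the representations $\Phi_{\sigma}$ on the intersections $\B_{\sigma_{1}}\cap\B_{\sigma_{2}}$ in the proof of Theorem \ref{ExtThm2}; with your per-$\sigma$ isomorphisms that consistency (e.g. via Lemma \ref{peel}(2)) would still have to be established separately.
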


We now have a family of $L$-algebras $\{
(\Phi_{\sigma},\B_{\sigma})\}_{\sigma \in \Sigma}$.  By showing that
the action of $L$ is well-defined on the intersection of these
subalgebras, we obtain the representation $(\Phi,\B)$ of $L$ and
prove Theorem \ref{ExtThm2}.

We conclude this section by describing how to use our results to
resolve the multiplicities that occur in branching of symplectic
groups. Recall that generically the branching of the symplectic
groups is not multiplicity free.  It's a fundamental problem in
classical invariant theory to resolve these multiplicities.  Theorem
\ref{ExtThm2} provides a solution to this problem that is rooted in
classical invariant theoretic techniques.  Another solution to this
problem using the theory of quantum groups, in particular Yangians,
appears in \cite{M2}.

Now, it is well known that irreducible $L$-modules have one
dimensional weight spaces. Therefore, by Theorem \ref{ExtThm2} we
obtain a canonical decomposition of the multiplicity spaces
$W^{\lambda / \mu}$ into one-dimensional spaces.  A priori, it seems
that this decomposition depends on a choice of torus of L. We will
show that in fact this choice is induced by the torus of $Sp_{2n}$
that is fixed throughout.  More precisely, we have:
%
%
%
%
%

\begin{corollary}
\label{ExtCor} Let $(\mu,\lambda) \in \Lambda$  There is a canonical
decomposition of $W^{\lambda / \mu}$ into one dimensional spaces
\begin{equation*}
W^{\lambda / \mu}=\bigoplus_{\substack{ \gamma \in \Lambda_{n} \\
\mu < \gamma < \lambda^{+}} } W^{\lambda / \gamma / \mu}.
\end{equation*}%
In particular, $\B$ has a basis which is unique up to scalar.
\end{corollary}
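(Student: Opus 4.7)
The plan is to obtain the decomposition as the $T_L$-weight-space decomposition of $W^{\lambda / \mu}$, where $T_L$ is a canonical maximal torus of $L$. By Theorem \ref{ExtThm2}, $W^{\lambda / \mu}$ is an irreducible $L$-module isomorphic to $A^{\lambda / \mu}=\bigotimes_{i=1}^n F^{r_i(\mu,\lambda^+)}$. Taking $T_L=\prod_{i=1}^{n} T_i$, where $T_i \subset SL_2$ is the diagonal torus arising from the realization $F^k=\calli{O}^k(\bbC^2)$, each factor $F^{r_i}$ splits into one-dimensional $T_i$-weight spaces with weights $-r_i, -r_i+2, \ldots, r_i$. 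Hence $W^{\lambda / \mu}$ decomposes as a direct sum of one-dimensional $T_L$-weight spaces indexed by tuples $(m_1,\ldots,m_n)$ with $m_i \in \{-r_i,-r_i+2,\ldots,r_i\}$.

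I would then establish a combinatorial bijection between these weight tuples and the set $\{\gamma \in \Lambda_n : \mu < \gamma < \lambda^+\}$. With $(x_1 \geq y_1 \geq \cdots \geq x_n \geq y_n)$ the non-increasing rearrangement defining $r_i = x_i - y_i$, send $(m_1,\ldots,m_n) \mapsto \gamma$ via $\gamma_i = \tfrac{1}{2}(x_i+y_i+m_i)$, so that $\gamma_i$ ranges over the integers $y_i, y_i+1, \ldots, x_i$. A direct check—using the identities $x_i = \min(\mu_{i-1},\lambda_i^+)$ and $y_i = \max(\mu_i, \lambda_{i+1}^+)$ (with the natural endpoint conventions $\mu_0=+\infty$, $\mu_n=0$, $\lambda_{n+1}^+=0$)—shows that these bounds on $\gamma_i$ are exactly those imposed by the interlacings $\mu < \gamma < \lambda^+$, so the map is a bijection. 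Defining $W^{\lambda / \gamma / \mu}$ to be the $T_L$-weight space corresponding to $\gamma$ gives the claimed one-dimensional decomposition, and choosing a nonzero vector in each such space produces a basis of $\B$, unique up to an overall scalar per component.

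The remaining subtlety—and what I expect to be the main obstacle—is the canonicity of the torus $T_L$, since a priori the decomposition could depend on an arbitrary choice of torus inside $L$. The diagonal $SL_2 \subset L$ acts on $\B$ via $Res_{SL_2}^L \Phi$ as the natural $SL_2$-action coming from the subgroup (\ref{Sp1}), whose maximal torus is induced by $T_{C_n}$ and is precisely the diagonal of $T_L$. To pin down each factor torus $T_i$ individually, I would trace through the construction of $\Phi$ via the isomorphisms $\phi_\sigma: \B_\sigma \to \calli{A}_\sigma$ of Proposition \ref{CorIsomMtoA}; the polynomial realization $\calli{A}_\sigma \cong \calli{O}(V)$ forces each $T_i$ canonically from the grading structure, so the resulting decomposition depends only on $T_{C_n}$, as asserted.
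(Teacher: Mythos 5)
Your proposal is correct and follows essentially the same route as the paper: invoke Theorem \ref{ExtThm2} to identify $W^{\lambda/\mu}$ with the irreducible $L$-module $\bigotimes_i F^{r_i}$, decompose into one-dimensional $T_L$-weight spaces, and index them by $\{\gamma\in\Lambda_n:\mu<\gamma<\lambda^+\}$ via $\gamma_i=\tfrac12(x_i+y_i+m_i)$, which is exactly the paper's Lemma \ref{basislemma}. For canonicity you do not need to trace through the isomorphisms $\phi_\sigma$ of Proposition \ref{CorIsomMtoA}: since $T_L$ is the unique maximal torus of $L$ containing the diagonally embedded torus $SL_2\cap T_{C_n}$, and $\Phi$ is unique, the decomposition depends only on $T_{C_n}$ --- which is precisely the paper's (shorter) argument.
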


Properties of the basis of $\B$ appearing in Corollary \ref{ExtCor}
are studied in [KY]; in particular we show that it is a standard
monomial basis, i.e. it satisfies a straightening law.  We then use
that to describe an explicit toric deformation of $Spec(\B)$.

\section{Proof of Theorem \ref{TransferThm2}}

\subsection{\label{SectionIndSyst}Some results of Zhelobenko}
Let $G$ be a connected classical group. \ We use freely the notation
from Section \ref{section2.1}.  Let $\lambda \in \Lambda_{G}$.  Then
$F^{\lambda}_{G} \subset \calli{R}_{G}$ embeds linearly in
$\mathcal{O}(U_{G})$ via $res:f{\mapsto }f|_{U_{G}}.$  Set
$Z_{\lambda }(U_{G})=res_{U_{G}}(F^{\lambda }_{G})$.  If there is no
cause for confusion, we write simply $Z_{\lambda }=Z_{\lambda
}(U_{G})$.

We define a representation of $G$ on $Z_{\lambda }$ as follows. \
Let $e^{\lambda }:T_{G}\rightarrow \mathbb{C}$ be the character of
$T_{G}$ given by $t\mapsto t^{\lambda }$. \ We
extend this character to $\overline{U}_{G}T_{G}U_{G}$ by defining $e^{\lambda }(\overline{u}%
tu)=t^{\lambda }$. \ Then by continuity $e^{\lambda }$ is defined on all of $%
G$. \ Now let $u\in U_{G}$, $g\in u^{-1}\overline{U}_{G}T_{G}U_{G}$,
and $f\in Z_{\lambda }$. \ Write $ug=\overline{u}_{1}t_{1}u_{1}\in
\overline{U}_{G}T_{G}U_{G}$. \ Then define
\begin{equation}
\label{Zheloaction} g.f(u)=e^{\lambda }(t_{1})f(u_{1})\text{.}
\end{equation}%
Since $u^{-1}\overline{U}_{G}T_{G}U_{G}$ is dense, we extend this
action to all of $G$. \ Note that the constant
function $z_{\lambda }:u\mapsto 1$ is a canonical highest weight vector in $%
Z_{\lambda }$ of weight $\lambda$, and $res: F_{G}^{\lambda}
\rightarrow Z_{\lambda}$ is an isomorphism of $G$-modules.

Let $\{\alpha _{1},...,\alpha _{n}\}$ be a set of simple roots
relative to the positive roots $\Phi ^{+}$. \ For each $\alpha _{i}$
choose a nonzero root vector $X_{i}\in \mathfrak{g}_{\alpha _{i}}$.
\ Let $D_{i}$ be the differential operator on $\mathcal{O}(U_{G})$
corresponding to the infinitesimal action of $X_{i}$ acting on
$\mathcal{O}(U_{G})$ by left translation. \ Finally, let $\{\varpi
_{1},...,\varpi _{n}\}$ be the fundamental weights and suppose
$\lambda =m_{1}\varpi _{1}+\cdots +m_{n}\varpi _{n}\in \Lambda_{G}$.

\begin{proposition}[{Theorem 1, \S65, Chapter X, \cite{Z}}]
\label{ZheloProp} The space $Z_{\lambda} \subset \calli{O}(U_{G})$
is the solutions to the system of differential equations
$\{D_{i}^{m_{i}+1}=0 : i=1,...,n \}$.   In other words, $$Z_{\lambda
}=\{f\in \mathcal{O}(U_{G}):D_{i}^{m_{i}+1}f=0 \text{ for }
i=1,...,n\}.$$
\end{proposition}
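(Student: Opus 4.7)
The plan is to prove each inclusion in turn, using different tools.

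For the inclusion $Z_{\lambda} \subseteq \{f : D_i^{m_i+1} f = 0\}$, I would realize each $f \in F^{\lambda}_G$ as a matrix coefficient $f(g) = \langle v^*, g v\rangle$, where $v^* \in (F^{\lambda}_G)^*$ is the lowest weight covector of weight $-\lambda$. Since $-\lambda - \alpha$ is not a weight of the contragredient for any positive root $\alpha$, $v^*$ is annihilated by every $Y \in \mathfrak{u}_-$. Using $\langle v^*, X \cdot w\rangle = -\langle X \cdot v^*, w\rangle$ and expanding the exponential,
\[
f(\exp(tX_i) u) = \sum_{k \geq 0} \frac{(-t)^k}{k!}\, \langle X_i^k \cdot v^*, u \cdot v\rangle.
\]
Since $v^*$ has $H_i$-weight $-m_i$ and is killed by $Y_i$, the standard $\mathfrak{sl}_2$ argument (any vector of weight $-m$ annihilated by $Y$ in a finite-dimensional $\mathfrak{sl}_2$-module is killed by $X^{m+1}$) shows $X_i^{m_i+1} v^* = 0$. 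Hence the expansion in $t$ terminates at degree $m_i$, giving $D_i^{m_i+1}(f|_{U_G}) = 0$.

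For the reverse inclusion, set $M = \{f \in \mathcal{O}(U_G) : D_i^{m_i+1} f = 0 \text{ for all } i\}$. By Part~1, $Z_{\lambda} \subseteq M$, so it suffices to prove $\dim M \leq \dim F^{\lambda}_G$. The plan is to set up a duality between $\mathcal{O}(U_G)$ and $U(\mathfrak{u}_-)$ under which $D_i$ corresponds to left multiplication by $Y_i$. Concretely, composing the inversion $U_G \to \overline{U}_G$ (or a Chevalley involution $\mathfrak{u} \cong \mathfrak{u}_-$) with the exponential $\overline{U}_G \cong \mathfrak{u}_-$ identifies $\mathcal{O}(U_G)$ with the graded dual of $U(\mathfrak{u}_-)$ via the pairing $\langle f, X\rangle := (X \cdot f)(e)$, where $X \in U(\mathfrak{u}_-)$ acts as a left-invariant differential operator. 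Under this identification, $M$ corresponds to the annihilator of the left ideal $I = \sum_i U(\mathfrak{u}_-)\, Y_i^{m_i+1}$. Each vector $Y_i^{m_i+1} v_{\lambda} \in M(\lambda)$ is singular (one checks $X_j Y_i^{m_i+1} v_{\lambda} = 0$ for all simple $j$ using $\mathfrak{sl}_2$-commutation relations), so these vectors generate the maximal proper submodule $K(\lambda)$; hence $U(\mathfrak{u}_-)/I \cong M(\lambda)/K(\lambda) \cong F^{\lambda}_G$. Thus $\dim M = \dim F^{\lambda}_G = \dim Z_{\lambda}$, yielding equality.

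The main obstacle is establishing the duality in Part~2: one must carefully track conventions (inversion, Chevalley involution, signs in the contragredient action, PBW commutation corrections) to verify that $D_i$ — defined as left translation by the \emph{raising} operator $X_i \in \mathfrak{u}$ — corresponds precisely to left multiplication by the \emph{lowering} operator $Y_i$ on higher-order iterates. The Verma module presentation $U(\mathfrak{u}_-)/I \cong F^{\lambda}_G$ itself is standard once the singular vector computation above is in hand.
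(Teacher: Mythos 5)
The paper itself contains no proof of this proposition: it is imported verbatim from Zhelobenko (Theorem 1, \S 65, Chapter X of \cite{Z}), whose own treatment analyzes the indicator system directly in coordinates. Your route is therefore genuinely different, and in outline it is correct. For $Z_{\lambda}\subseteq M$ you use that every $f\in F^{\lambda}_{G}\subset\calli{R}_{G}$ is a matrix coefficient against the one-dimensional space $((F^{\lambda}_{G})^{*})^{\overline{U}_{G}}$, spanned by the lowest weight covector $v^{*}$ of weight $-\lambda$, together with the $\mathfrak{sl}_{2}$ termination $X_{i}^{m_{i}+1}v^{*}=0$; this is consistent with the realization of $\calli{R}_{G}$ used in the paper, and since $res:F^{\lambda}_{G}\rightarrow Z_{\lambda}$ is injective the equality $\dim Z_{\lambda}=\dim F^{\lambda}_{G}$ that you need at the end is available. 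For the reverse inclusion, your reduction to $\dim M\leq\dim F^{\lambda}_{G}$ needs only the injectivity of the Taylor pairing $f\mapsto\bigl(X\mapsto (Xf)(e)\bigr)$, which holds because $U_{G}$ is unipotent (exp is an isomorphism of varieties, so a regular function all of whose derivatives at $e$ vanish is zero), plus the identification of $M$ with the functionals killing a one-sided ideal of finite codimension equal to $\dim F^{\lambda}_{G}$. This gives a clean, self-contained algebraic proof, where Zhelobenko's argument is more computational; the price is that you must invoke the Verma-module presentation of $F^{\lambda}_{G}$.

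Two points to tighten. First, inversion carries $U_{G}$ to itself, not to $\overline{U}_{G}$; what you actually need is that $D_{i}$ (left translation by $X_{i}$) is dual, under the pairing of $\mathcal{O}(U_{G})$ with $U(\mathfrak{u}_{G})$, to multiplication by $X_{i}$ on one side, so that $M$ is the annihilator of the right ideal $\sum_{i}X_{i}^{m_{i}+1}U(\mathfrak{u}_{G})$; the antipode then converts this to the left ideal $\sum_{i}U(\mathfrak{u}_{G})X_{i}^{m_{i}+1}$, and a Chevalley involution carries it to $\sum_{i}U(\overline{\mathfrak{u}}_{G})Y_{i}^{m_{i}+1}$, with the same codimension at every step. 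You flagged this bookkeeping yourself, and it does work out. Second, the word ``so'' in ``each $Y_{i}^{m_{i}+1}v_{\lambda}$ is singular, so these vectors generate the maximal proper submodule'' overstates the logic: singularity only yields a proper submodule, and the fact that for dominant integral $\lambda$ the quotient $M(\lambda)/\sum_{i}U(\mathfrak{g})Y_{i}^{m_{i}+1}v_{\lambda}$ is finite dimensional and irreducible, hence isomorphic to $F^{\lambda}_{G}$, is the standard but nontrivial theorem you should cite explicitly (it is proved independently of \cite{Z}, so there is no circularity). With those repairs the argument is complete.
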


In \cite{Z} the system of differential equations $\{D_{i}^{m_{i}+1}=0:i=1,...,n%
\} $ is termed the ``indicator system''.  Notice that by the Leibniz rule $Z_{\lambda }Z_{\lambda^{\prime }}=Z_{\lambda
+\lambda ^{\prime }}$.

\medskip
Consider the ring $\mathcal{O}(U_{G} \times \mathbb{C}^{n})$. \ Let $%
t_{1},...,t_{n}$ be the standard coordinates on $\mathbb{C}^{n}$. \ Then $%
\mathcal{O}(U_{G} \times
\mathbb{C}^{n})=\tbigoplus\nolimits_{\overline{m}\in
\mathbb{N}^{n}}\mathcal{O}(U_{G})\otimes t_{1}^{m_{1}}\cdots
t_{n}^{m_{n}}$. \ Set $\overline{m}_{\lambda}=(m_{1},...,m_{n})$
where $\lambda =m_{1}\varpi _{1}+\cdots +m_{n}\varpi _{n}$. \ We
form the subring
\begin{equation}
\label{ZheloFlagAlg}
\mathcal{Z}_{G}=\bigoplus_{\lambda \in \Lambda_{G}} Z_{\lambda}\otimes
t^{\overline{m}_{\lambda}}
\end{equation}
of $\mathcal{O}(U_{G} \times \mathbb{C}^{n})$.  This is a $G$-ring,
with $G$ acting on the left factor. Define a map
\begin{equation}
res: \mathcal{R}_{G} \rightarrow \mathcal{Z}_{G}  \label{ZheloIsom}
\end{equation}%
by $$f \in F^{\lambda}_{G} \mapsto f|_{U_{G}} \otimes
t^{\overline{m}_{\lambda}} \in Z_{\lambda}\otimes
t^{\overline{m}_{\lambda}}.$$It's easy to see that:
\begin{proposition}
\label{ZheloRemark}The map $res: \mathcal{R}_{G} \rightarrow \mathcal{Z}_{G}$ is an isomorphism of $G$-rings.
\end{proposition}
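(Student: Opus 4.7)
The map $res$ is defined piecewise on the $\Lambda_G$-graded decomposition $\mathcal{R}_G = \bigoplus_\lambda F^\lambda_G$, so the plan is to verify the four properties (well-defined, bijective, ring homomorphism, $G$-equivariant) using the decompositions on both sides and the two key facts already in hand from the preceding paragraphs: (i) the restriction map $F^\lambda_G \to Z_\lambda$ is a $G$-module isomorphism (which is the content of the statement that $res:F^\lambda_G\to Z_\lambda$ intertwines the natural right-translation action of $G$ on $F^\lambda_G$ with the action on $Z_\lambda$ defined via formula (\ref{Zheloaction})), and (ii) $F^\lambda_G\,F^{\lambda'}_G = F^{\lambda+\lambda'}_G$ inside $\mathcal{R}_G$, with $f^\lambda_G f^{\lambda'}_G = f^{\lambda+\lambda'}_G$.

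First I would note that by definition $Z_\lambda = res_{U_G}(F^\lambda_G)$, so tensoring with the monomial $t^{\overline{m}_\lambda}\in\mathcal{O}(\mathbb{C}^n)$ lands $F^\lambda_G$ inside the summand $Z_\lambda\otimes t^{\overline{m}_\lambda}$ of $\mathcal{Z}_G$; extending by linearity over $\lambda$ gives a well-defined map $res:\mathcal{R}_G\to\mathcal{Z}_G$. Bijectivity then follows componentwise: on each $\lambda$ the restriction $F^\lambda_G\to Z_\lambda$ is an isomorphism by (i), the tensor with the nonzero monomial $t^{\overline{m}_\lambda}$ is harmless, and the assignment $\lambda\mapsto \overline{m}_\lambda$ (the coefficient vector in the basis of fundamental weights) is injective, so the summands $Z_\lambda\otimes t^{\overline{m}_\lambda}$ are genuinely disjoint in $\mathcal{O}(U_G\times\mathbb{C}^n)$. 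Summing the isomorphisms in each grade yields a bijection $\mathcal{R}_G\to\mathcal{Z}_G$.

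To show $res$ is a ring map, I take $f\in F^\lambda_G$ and $f'\in F^{\lambda'}_G$ and compute both sides: in $\mathcal{Z}_G$, the product inside $\mathcal{O}(U_G\times\mathbb{C}^n)$ is
\begin{equation*}
\bigl(f|_{U_G}\otimes t^{\overline{m}_\lambda}\bigr)\bigl(f'|_{U_G}\otimes t^{\overline{m}_{\lambda'}}\bigr)=(ff')|_{U_G}\otimes t^{\overline{m}_\lambda+\overline{m}_{\lambda'}},
\end{equation*}
and by fact (ii) we have $ff'\in F^{\lambda+\lambda'}_G$, while the linearity $\lambda\mapsto\overline{m}_\lambda$ gives $\overline{m}_\lambda+\overline{m}_{\lambda'}=\overline{m}_{\lambda+\lambda'}$, so the right-hand side is precisely $res(ff')$. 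Extending bilinearly handles general elements. Finally, $G$-equivariance is built in: $G$ acts on $\mathcal{Z}_G$ only through the left factor $\mathcal{O}(U_G)$ (the polynomial ring $\mathbb{C}[t_1,\dots,t_n]$ is fixed), while $res$ on each $F^\lambda_G$ intertwines right translation with the Zhelobenko action on $Z_\lambda$ by construction of the latter.

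There is no real obstacle here; the statement is essentially a repackaging, and the only point that deserves care is confirming that the fundamental-weight coordinates $\overline{m}_\lambda$ assemble additively so that the grading of $\mathcal{Z}_G$ by monomials in $t$ matches the $\Lambda_G$-grading of $\mathcal{R}_G$ under multiplication. Once that bookkeeping is in place, the isomorphism drops out from the componentwise isomorphism $F^\lambda_G\cong Z_\lambda$ already supplied by Proposition \ref{ZheloProp} and its setup.
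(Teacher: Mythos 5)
Your verification is correct and is exactly the routine check the paper leaves implicit (the paper simply asserts ``It's easy to see that\ldots''), using the same ingredients it has already set up: the componentwise $G$-isomorphism $res:F^{\lambda}_{G}\to Z_{\lambda}$, the Cartan multiplication $F^{\lambda}_{G}F^{\lambda'}_{G}=F^{\lambda+\lambda'}_{G}$, and the additivity and injectivity of $\lambda\mapsto\overline{m}_{\lambda}$. Nothing to add.
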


\subsection{Preparatory lemmas}
We now specialize the results of the previous section to our
setting.

For $\lambda \in \Lambda_{n}$ we consider Zhelobenko's realization
of $V^{\lambda^{+}}$, which is denoted $Z_{\lambda^{+}}(U_{n+1})$.
So $Z_{\lambda^{+}}(U_{n+1})$ is an irreducible $GL_{n+1}$-module.
Let $x_{ij}$ be the standard coordinates on $U_{n+1}$.

Zhelobenko's realization of the
irreducible $Sp_{2n}$-module of highest weight $\lambda$ is denoted
$Z_{\lambda}(U_{C_{n}})$.  For the affine space $U_{C_{n}}$, the
following can be taken as coordinates:
\begin{equation}
\begin{array}{cccccccc}
1 & u_{12} &  &  & \cdots &  & u_{1n-1} & u_{1n} \\
& 1 & u_{23} &  & \cdots &  & u_{2n-1} &  \\
&  & \ddots & \ddots &  & .^{{\Large .}^{{\Huge .}}} &  &  \\
&  &  & 1 & u_{n,n+1} &  &  &
\end{array}
\label{vars}
\end{equation}%
(The one's are retained here in order to preserve the symmetry of the
entries.) \ The other entries of $U_{C_{n}}$ are polynomials in these coordinates.

The group $U_{n-1}$ acts on $\mathcal{O}(U_{n+1})$ by right
translation, and a straight-forward calculation shows that $f\in
\mathcal{O}(U_{n+1})^{U_{n-1}}$ if, and only if, it's a polynomial
in the variables
\begin{equation*}
\{x_{i,j} : i=1,...,n \text{ }j=n,n+1 \text{ and }i<j\}.
\label{vars2}
\end{equation*}%
Similarly, $\mathcal{O}(U_{C_{n}})^{U_{C_{n-1}}}$ is the polynomial
ring in the variables
\begin{equation*}
\{u_{i,j} : i=1,...,n \text{ }j=n,n+1 \text{ and }i<j\}.
\label{vars3}
\end{equation*}
Hence both $\mathcal{O}(U_{n+1})^{U_{n-1}}$ and $\mathcal{O}(U_{C_{n}})^{U_{C_{n-1}}}$ are polynomial rings in $%
2n-1$ variables.

Let $\psi_{a,b}:M_{m,n} \rightarrow M_{a,b}$ be the map assigning a
matrix its principal $a \times b$ submatrix, and set
$\psi_{a}=\psi_{a,a}$. Let
$\psi_{Z}=\psi_{n+1}|_{U_{C_{n}}}:U_{C_{n}} \rightarrow U_{n+1}$.
The induced map on functions $\psi_{Z}^{*}:\calli{O}(U_{n+1})
\rightarrow \calli{O}(U_{C_{n}})$ satisfies
$\psi_{Z}^{*}(x_{ij})=u_{ij}$.  By our
descriptions of the rings $\mathcal{O}(U_{C_{n}})^{U_{C_{n-1}}}$ and $%
\mathcal{O}(U_{n+1})^{U_{n-1}}$,
$\psi_{Z}^{*}:\mathcal{O}(U_{n+1})^{U_{n-1}} \rightarrow
\mathcal{O}(U_{C_{n}})^{U_{C_{n-1}}} $ is a ring isomorphism.
%
%
%
%
%
%


\begin{lemma}
\label{TransferLemma}Let $\lambda \in \Lambda_{n}$. \ The map
$\psi_{Z}^{*}$ restricts to a linear isomorphism: $$\psi_{Z}^{*}
:Z_{\lambda ^{+}}(U_{n+1})^{U_{n-1}}\overset{\simeq }{\rightarrow
}Z_{\lambda }(U_{C_{n}})^{U_{C_{n-1}}}$$
\end{lemma}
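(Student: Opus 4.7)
The plan is to characterize both $Z_{\lambda^+}(U_{n+1})^{U_{n-1}}$ and $Z_\lambda(U_{C_n})^{U_{C_{n-1}}}$ as simultaneous kernels of explicit differential operators via Proposition~\ref{ZheloProp}, and then verify that the ring isomorphism $\psi_Z^*$ intertwines these two systems of equations.

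First I would note that the fundamental-weight decompositions of $\lambda^+ \in \Lambda_{n+1}$ (for $GL_{n+1}$) and $\lambda \in \Lambda_n$ (for $Sp_{2n}$) yield identical coefficients $m_i = \lambda_i - \lambda_{i+1}$ for $i<n$ and $m_n = \lambda_n$. By Proposition~\ref{ZheloProp}, $Z_{\lambda^+}(U_{n+1})^{U_{n-1}}$ therefore consists of the $U_{n-1}$-invariants annihilated by $D_i^{m_i+1}$ for $i=1,\ldots,n$, where $D_i$ is left translation by the root vector $E_{i,i+1}$, and $Z_\lambda(U_{C_n})^{U_{C_{n-1}}}$ consists of the $U_{C_{n-1}}$-invariants annihilated by $\tilde D_i^{m_i+1}$, where $\tilde D_i$ is left translation by the symplectic simple root vector $X_i$ (equal to $E_{i,i+1} - E_{2n-i,\,2n+1-i}$ for $i<n$, and to $E_{n,n+1}$ for $i=n$).

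Next I would compute the restriction of these operators to the respective invariant subrings. A direct coordinate computation gives $D_i = \partial_{x_{i,i+1}} + \sum_{k > i+1} x_{i+1,k}\, \partial_{x_{i,k}}$, which when restricted to $\mathcal{O}(U_{n+1})^{U_{n-1}}$ retains only the terms in $\partial_{x_{i,n}}, \partial_{x_{i,n+1}}$ (plus the leading $\partial_{x_{i,i+1}}$ when $i+1 \in \{n,n+1\}$). The analogous computation for $\tilde D_i$ rests on the key observation that the summand $-E_{2n-i,\,2n+1-i}$ of $X_i$ (for $i<n$) perturbs only row $2n-i$ of $u \in U_{C_n}$, whose entries are dependent coordinates determined by the symplectic relation $u^t J_n u = J_n$ and hence do not appear among the generators of $\mathcal{O}(U_{C_n})^{U_{C_{n-1}}}$. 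Consequently this summand contributes nothing to the induced operator on independent coordinates. Combined with the facts that $u_{i+1,n}$ and $u_{i+1,n+1}$ are independent generators for $i\leq n-1$ and that $u_{n,n}=u_{n+1,n+1}=1$, one finds that $\tilde D_i$ on $\mathcal{O}(U_{C_n})^{U_{C_{n-1}}}$ takes the same formal expression as $D_i$ on $\mathcal{O}(U_{n+1})^{U_{n-1}}$ under $x_{j,k} \mapsto u_{j,k}$.

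Since $\psi_Z^*$ is already known to be a ring isomorphism between the invariant subrings and now intertwines $\{D_i^{m_i+1}\}$ with $\{\tilde D_i^{m_i+1}\}$, it bijects their simultaneous kernels, which gives the desired linear isomorphism. The main obstacle I would anticipate is the explicit computation of $\tilde D_i$ in the symplectic coordinates, complicated by the presence of dependent entries of $U_{C_n}$; but this is precisely what makes the argument work, since the dependent entries in row $2n-i$ do not appear in the $U_{C_{n-1}}$-invariant subring, so the second summand of $X_i$ becomes invisible there and the symplectic calculation reduces cleanly to its $GL_{n+1}$ analogue.
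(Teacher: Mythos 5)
Your proposal is correct and follows essentially the same route as the paper: both arguments identify $Z_{\lambda^{+}}(U_{n+1})^{U_{n-1}}$ and $Z_{\lambda}(U_{C_{n}})^{U_{C_{n-1}}}$ as the solution sets, inside the respective invariant polynomial rings in the variables $x_{i,n},x_{i,n+1}$ and $u_{i,n},u_{i,n+1}$, of the identical indicator systems $(x_{i+1,n}\partial_{x_{i,n}}+x_{i+1,n+1}\partial_{x_{i,n+1}})^{m_{i}+1}=0$, and then conclude from $\psi_{Z}^{*}(x_{ij})=u_{ij}$. The only difference is that the paper cites the proof of Theorem 4, \S 114 of Zhelobenko for these restricted indicator systems, whereas you rederive them from Proposition \ref{ZheloProp} by computing the operators directly (using that the extra summand $-E_{2n-i,2n+1-i}$ only moves row $2n-i$, which contains no independent coordinates), which is a correct and self-contained substitute.
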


\begin{proof}
Let $\overline{m}_{\lambda}=(m_{1},...,m_{n})$.  In the proof of
Theorem 4, \S 114 in \cite{Z}, Zhelobenko shows that $Z_{\lambda
}(U_{C_{n}})^{U_{C_{n-1}}}$ equals%
\begin{equation*}
\left\{ f\in \mathcal{O}(U_{C_{n}})^{U_{C_{n-1}}}:(u_{i+1,n}\frac{\partial }{%
\partial u_{i,n}}+u_{i+1,n+1}\frac{\partial }{\partial u_{i,n+1}}%
)^{m_{i}+1}(f)=0\text{ for }i=1,...,n\right\} \label{SympIndi}
\end{equation*}
while
$Z_{\lambda ^{+}}(U_{n+1})^{U_{n-1}}$ equals%
\begin{equation*}
\left\{ f\in \mathcal{O}(U_{n+1})^{U_{n-1}}:(x_{i+1,n}\frac{\partial }{%
\partial x_{i,n}}+x_{i+1,n+1}\frac{\partial }{\partial x_{i,n+1}}%
)^{m_{i}+1}(f)=0\text{ for }i=1,...,n\right\} \text{.}  \label{GLIndi}
\end{equation*}%
With these descriptions in hand, and the fact that
$\psi_{Z}^{*}(x_{ij})=u_{ij}$, it follows that $$\psi_{Z}^{*}
(Z_{\lambda ^{+}}(U_{n+1})^{U_{n-1}})=Z_{\lambda
}(U_{C_{n}})^{U_{C_{n-1}}}.$$
\end{proof}
%
%

Now, $T_{C_{n-1}}$ acts on $Z_{\lambda }(U_{C_{n}})^{U_{C_{n-1}}}$, while $%
T_{n-1}$ acts on $Z_{\lambda ^{+}}(U_{n+1})^{U_{n-1}}$. \ In the
next lemma, these tori are both identified with $(\mathbb{C}^{\times
})^{n-1}$.

\begin{lemma}
\label{MainIndiLemma}Let $\lambda \in \Lambda_{n}$. $\ $Then
$\psi_{Z}^{*} :Z_{\lambda ^{+}}(U_{n+1})^{U_{n-1}}\rightarrow
Z_{\lambda }(U_{C_{n}})^{U_{C_{n-1}}}$ is a $(\mathbb{C}^{\times
})^{n-1}$-isomorphism.
\end{lemma}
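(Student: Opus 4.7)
\medskip

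\noindent\textbf{Proof proposal.} By Lemma \ref{TransferLemma} we already know $\psi_Z^*$ is a linear isomorphism, so the task is purely to check torus-equivariance after identifying both $T_{n-1}$ and $T_{C_{n-1}}$ with $(\mathbb{C}^\times)^{n-1}$. I will do this by directly comparing Zhelobenko's action formula on the two sides, using that the coordinates $x_{ij}$ on $U_{n+1}$ and $u_{ij}$ on $U_{C_n}$ that generate the respective invariant rings have matching indices (with $\psi_Z^*(x_{ij})=u_{ij}$) and that the ambient tori restrict to the same conjugation action on these coordinates.

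\medskip

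\noindent\textbf{Step 1: identify the tori.} The subgroup $T_{n-1}\subset T_{n+1}$ consists of diagonal matrices $\mathrm{diag}(t_1,\dots,t_{n-1},1,1)$, while $T_{C_{n-1}}\subset T_{C_n}$ consists of diagonal matrices $\mathrm{diag}(t_1,\dots,t_{n-1},1,1,t_{n-1}^{-1},\dots,t_1^{-1})$ in the basis ordering $e_1,\dots,e_n,e_{-n},\dots,e_{-1}$. In both cases the first $n+1$ diagonal entries are $(t_1,\dots,t_{n-1},1,1)$, and we identify $T_{n-1}\cong T_{C_{n-1}}\cong (\mathbb{C}^\times)^{n-1}$ by recording $(t_1,\dots,t_{n-1})$.

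\medskip

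\noindent\textbf{Step 2: unwind Zhelobenko's action.} For $t$ in the torus of either $GL_{n+1}$ or $Sp_{2n}$ and for $u$ in the corresponding maximal unipotent, we have $ut=1\cdot t\cdot (t^{-1}ut)\in\overline{U}TU$, so \eqref{Zheloaction} specializes to
\[
(t.f)(u)=t^{\lambda}\,f(t^{-1}ut).
\]
On the $GL_{n+1}$ side we use $\lambda^+=(\lambda_1,\dots,\lambda_{n-1},\lambda_n,0)$ and for $t\in T_{n-1}$ we get $t^{\lambda^+}=t_1^{\lambda_1}\cdots t_{n-1}^{\lambda_{n-1}}$ since $\lambda^+_n=\lambda^+_{n+1}=0$. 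On the $Sp_{2n}$ side we use $\lambda=(\lambda_1,\dots,\lambda_n)$ and for $t\in T_{C_{n-1}}$ the character $t^\lambda$ also equals $t_1^{\lambda_1}\cdots t_{n-1}^{\lambda_{n-1}}$, since the $n$-th slot is $1$. Thus the overall scalar $t^\lambda$ matches under the identification of Step 1.

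\medskip

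\noindent\textbf{Step 3: compare the conjugation action on coordinates.} The coordinate $x_{ij}$ (resp.\ $u_{ij}$) is the $(i,j)$-entry function, so conjugating by a diagonal $t$ sends it to $t_i^{-1}t_j\, x_{ij}$ (resp.\ $t_i^{-1}t_j\, u_{ij}$), where $t_k$ denotes the $k$-th diagonal entry of $t$. For the generating coordinates of the invariant rings, we only ever see indices with $i\le n$, $j\in\{n,n+1\}$; since $t_n=t_{n+1}=1$ in both tori, in both cases the conjugation action sends the generator with indices $(i,j)$ to $t_i^{-1}$ times itself. Consequently, on a monomial $m=\prod x_{ij}^{a_{ij}}\in Z_{\lambda^+}(U_{n+1})^{U_{n-1}}$,
\[
(t.m)(u)=t_1^{\lambda_1}\cdots t_{n-1}^{\lambda_{n-1}}\cdot\prod_{k=1}^{n-1}t_k^{-\sum_j a_{kj}}\cdot m(u),
\]
and on $\psi_Z^*(m)=\prod u_{ij}^{a_{ij}}\in Z_\lambda(U_{C_n})^{U_{C_{n-1}}}$ the identical formula holds. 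Hence $\psi_Z^*$ sends a $(\mathbb{C}^\times)^{n-1}$-weight vector of weight $\mu=(\lambda_k-\sum_j a_{kj})_{k=1}^{n-1}$ to a weight vector of the same weight, proving torus-equivariance.

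\medskip

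\noindent The only genuinely substantive point is the bookkeeping in Step 2 and Step 3: one must notice that the two different ambient tori restrict to identical data on the relevant coordinates precisely because the $n$-th and $(n+1)$-th diagonal entries of $T_{n-1}$ and $T_{C_{n-1}}$ are both trivial, and that passing from $\lambda$ to $\lambda^+$ (appending a zero) is exactly what keeps the Zhelobenko scalar $t^{\lambda}$ invariant under the restriction. This matching of conventions is what makes the two invariant theories of the indicator system line up.
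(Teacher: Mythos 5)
Your proof is correct and follows essentially the same route as the paper: invoke Lemma \ref{TransferLemma} for the linear isomorphism, then check torus-equivariance by specializing Zhelobenko's formula (\ref{Zheloaction}) to torus elements, where the action becomes the character times conjugation, and note that under $\psi_{Z}^{*}(x_{ij})=u_{ij}$ both sides transform identically because the $n$-th and $(n+1)$-th diagonal entries of both embedded tori equal $1$. One small slip in your Step 2: $\lambda^{+}_{n}=\lambda_{n}$ need not vanish (only $\lambda^{+}_{n+1}=0$); the factor $t_{n}^{\lambda_{n}}$ drops because $t_{n}=1$ for $t\in T_{n-1}$, which is exactly your Step 1, so the conclusion stands.
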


\begin{proof}
By Lemma \ref{TransferLemma}, it remains to show only that
$\psi_{Z}^{*}$ intertwines the $(\mathbb{C}^{\times
})^{n-1}$-action.  Let $f \in Z_{\lambda
^{+}}(U_{n+1})^{U_{n-1}}$, $t \in (\mathbb{C}^{\times})^{n-1}$,
and $u \in U_{C_{n}}$.  By definition of the action of the tori (cf.
(\ref{Zheloaction})),
$$
\psi_{Z}^{*}(t.f)(u)=e^{\lambda}(t)f(t^{-1}\psi_{Z}(u)t),
$$
while
$$
t.\psi_{Z}^{*}(f)(u)=e^{\lambda}(t)f(\psi_{Z}(t^{-1}ut)).
$$
It's easy to check now that
$\psi_{Z}^{*}(t.f)(u)=t.\psi_{Z}^{*}(f)(u)$.
\end{proof}
%
%
%
%

For $\mu \in \Lambda _{n-1}$ let $Z_{\lambda / \mu }(U_{C_{n}})$
(resp. $Z_{\lambda^{+} / \mu }(U_{n+1})$) denote the $\mu$ weight
spaces of $Z_{\lambda }(U_{C_{n}})^{U_{C_{n-1}}}$ (resp. $Z_{\lambda
^{+}}(U_{n+1})^{U_{n-1}}$).  Lemma \ref{MainIndiLemma} implies
that
$$
\psi_{Z}^{*} :Z_{\lambda^{+} / \mu }(U_{n+1})\overset{\simeq }{%
\rightarrow }Z_{\lambda / \mu }(U_{C_{n}})
$$
is a linear isomorphism. \ These spaces are isomorphic to the
multiplicity spaces $V^{\lambda^{+} / \mu}$ and $W^{\lambda / \mu}$.
Recall that $SL_{2}$ acts on these multiplicity spaces.

\begin{lemma}
\label{OtherIndiLemma}Let $\lambda \in \Lambda_{n}$ and $\mu \in
\Lambda _{n-1}$. \ Then $$ \psi_{Z}^{*} :Z_{\lambda^{+} / \mu
}(U_{n+1}) \rightarrow Z_{\lambda / \mu }(U_{C_{n}})
$$ is an $SL_{2}$-isomorphism.
\end{lemma}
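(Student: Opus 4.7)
The plan is to check $SL_2$-equivariance directly from the Zhelobenko action formula \eqref{Zheloaction}. Lemma \ref{MainIndiLemma} already gives a $(\mathbb{C}^{\times})^{n-1}$-equivariant linear isomorphism, so the content to add is the interaction between $\psi_Z^{*}$ and the $SL_2 \subset Sp_{2n}$ of \eqref{Sp1} on one side, and the "bottom-right" $SL_2 \subset GL_{n+1}$ (the one commuting with $GL_{n-1}$) on the other. I would first write $h \in SL_2 \subset Sp_{2n}$ in block form with block sizes $(n-1,1,1,n-1)$, note that $\tilde h := \psi_{n+1}(h)$ is exactly the embedded $SL_2 \subset GL_{n+1}$, and verify by a direct matrix calculation that $\psi_{n+1}(gh) = \psi_{n+1}(g)\tilde h$ for every $g \in Sp_{2n}$: right multiplication by $h$ only modifies the $n$-th and $(n{+}1)$-st columns, both of which are retained in the principal $(n{+}1)\times(n{+}1)$ submatrix.

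The next step is to establish two compatibility statements needed to port the Zhelobenko formula across $\psi_{n+1}$. (i) \emph{Bruhat compatibility}: for $\bar u \in \bar U_{C_n}$, $t \in T_{C_n}$, $u \in U_{C_n}$, one has $\psi_{n+1}(\bar u\, t\, u) = \psi_{n+1}(\bar u)\,\psi_{n+1}(t)\,\psi_{n+1}(u)$, with factors respectively in $\bar U_{n+1}$, $T_{n+1}$, $U_{n+1}$. This follows from the triangular structure inside $GL_{2n}$: for lower-triangular $\bar u$ the entries $\bar u_{ik}$ vanish whenever $i \leq n+1 < k$, which kills precisely the "tail" terms that would otherwise prevent the submatrix of a product from equaling the product of submatrices, and symmetrically for upper-triangular $u$. (ii) \emph{Weight compatibility}: $t = \mathrm{diag}(t_1,\ldots,t_n,t_n^{-1},\ldots,t_1^{-1}) \in T_{C_n}$ maps to $\psi_{n+1}(t) = \mathrm{diag}(t_1,\ldots,t_n,t_n^{-1}) \in T_{n+1}$, and since the last entry of $\lambda^+$ is $0$, we get $e^{\lambda^+}(\psi_{n+1}(t)) = t_1^{\lambda_1}\cdots t_n^{\lambda_n} = e^\lambda(t)$.

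Combining these, fix $h \in SL_2 \subset Sp_{2n}$ and $u \in U_{C_n}$ in the open subset where $uh$ lies in the big cell $\bar U_{C_n} T_{C_n} U_{C_n}$; write $uh = \bar u_1 s_1 u_1$. By the first step and (i), $\psi_Z(u)\tilde h = \psi_{n+1}(\bar u_1)\,\psi_{n+1}(s_1)\,\psi_{n+1}(u_1)$ is the Bruhat decomposition of $\psi_Z(u)\tilde h$ in $GL_{n+1}$. For $f \in Z_{\lambda^+ / \mu}(U_{n+1})$, the Zhelobenko formula then gives
\begin{equation*}
\psi_Z^{*}(\tilde h \cdot f)(u) = (\tilde h \cdot f)(\psi_Z(u)) = e^{\lambda^+}(\psi_{n+1}(s_1))\, f(\psi_{n+1}(u_1)),
\end{equation*}
while
\begin{equation*}
(h \cdot \psi_Z^{*}(f))(u) = e^{\lambda}(s_1)\, f(\psi_Z(u_1)) = e^{\lambda}(s_1)\, f(\psi_{n+1}(u_1)),
\end{equation*}
and these agree by (ii). By density of the big cell the equality extends everywhere, proving $SL_2$-equivariance. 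The main obstacle is item (i): showing that the principal-submatrix map is multiplicative on the particular triple of factors appearing in the Bruhat decomposition, and that those factors land in the correct Borel decomposition of $GL_{n+1}$. Lemma \ref{MainIndiLemma} handled a restricted version of this (torus factors alone); promoting to the full $SL_2$ action requires controlling the unipotent factors, which is exactly where the triangularity of $\bar U_{C_n}$ and $U_{C_n}$ inside $GL_{2n}$ is essential.
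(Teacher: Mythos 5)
Your proposal is correct and follows essentially the same route as the paper: starting from the linear isomorphism of Lemma \ref{MainIndiLemma}, you verify the intertwining pointwise via the Zhelobenko formula \eqref{Zheloaction}, using that $\psi_{n+1}$ carries $\beta(x)$ to $\alpha(x)$, is multiplicative on the Gauss factors $\overline{u}_{1}t_{1}u_{1}$, and satisfies $e^{\lambda^{+}}(\psi_{n+1}(t_{1}))=e^{\lambda}(t_{1})$. The only difference is that you spell out the triangularity and density arguments that the paper leaves implicit.
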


\begin{proof}
As mentioned above, the map $ \psi_{Z}^{*} :Z_{\lambda^{+} / \mu
}(U_{n+1}) \rightarrow Z_{\lambda / \mu }(U_{C_{n}})
$ is a linear isomorphism, so we just need to show it intertwines the $SL_{2}$ action.  Now, the action of of $SL_{2}$ on the multiplicity spaces is defined via its embeddings in $GL_{n+1}$ and $Sp_{2n}$ as
explained in Section \ref{section2.2}.  For the sake of clarity, let us denote these embeddings by
$\alpha:SL_{2} \hookrightarrow GL_{n+1}$ and $\beta:SL_{2} \hookrightarrow Sp_{2n}$.  Note that for $x \in SL_{2}$, we have $\psi_{n+1}(\beta(x))=\alpha(x)$.

Let $f \in Z_{\lambda^{+}}(U_{n+1})^{U_{n-1}}$, $x \in SL_{2}$, and $u \in U_{C_{n}}$.  We want to show that
\begin{equation}
\label{SL2intertwiner}
\psi_{n+1}^{*}(\alpha(x).f)(u)=(\beta(x).(\psi_{n+1}^{*}(f)))(u).
\end{equation}
By definition of the action of $Sp_{2n}$ on $Z_{\lambda}(U_{C_{n}})$
(see (\ref{Zheloaction})), we have
$$
(\beta(x).(\psi_{n+1}^{*}(f)))(u)=e^{\lambda}(t_{1})f(\psi_{n+1}(u_{1})),
$$
where
$$
u\beta(x)=\overline{u}_{1}t_{1}u_{1} \in \overline{U}_{C_{n}}T_{C_{n}}U_{C_{n}}.
$$

To describe the left hand side of (\ref{SL2intertwiner}), we need to first decompose $\psi_{n+1}(u)\alpha(x)$ into
a product compatible with $\overline{U}_{n+1}T_{n+1}U_{n+1}$.  To wit,
\begin{eqnarray*}
\psi_{n+1}(u)\alpha(x) &=& \psi_{n+1}(u)\psi_{n+1}(\beta(x)) \\ &=& \psi_{n+1}(u\beta(x)) \\ &=& \psi_{n+1}(\overline{u}_{1}t_{1}u_{1})
\\ &=& \psi_{n+1}(\overline{u}_{1})\psi_{n+1}(t_{1})\psi_{n+1}(u_{1}).
\end{eqnarray*}
Therefore
$$
\psi_{n+1}^{*}(\alpha(x).f)(u)=e^{\lambda^{+}}(\psi_{n+1}(t_{1}))f(\psi_{n+1}(u_{1})).
$$
Since clearly, $e^{\lambda}(t_{1})=e^{\lambda^{+}}(\psi_{n+1}(t_{1}))$, (\ref{SL2intertwiner}) holds.
\end{proof}

\subsection{\label{section5.5} Proof of Theorem \ref{TransferThm2}}

Recall the homomorphism $\psi^{*}:\calli{O}(M_{n,n+1}) \rightarrow
\calli{O}(Sp_{2n})$ (see (\ref{psistar})), which is induced from the
map $\psi$ taking an element of $Sp_{2n}$ to its principal $n \times
(n+1)$ submatrix.

\begin{lemma}
\label{psistarlemma} We have $\psi^{*}(\B') \subset \B$.  Moreover,
$\psi^{*}:\B' \rightarrow \B$ is an $\Lambda_{n-1,n}$-graded map of
$SL_{2}$ algebras.
\end{lemma}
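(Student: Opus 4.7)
The plan is to verify each assertion by short matrix calculations using the explicit form of the relevant subgroups, assembling information already implicit in the preparatory lemmas of this section. Since $\psi$ is a morphism of affine varieties, $\psi^{*}$ is automatically a ring homomorphism, so the content lies in checking compatibility with the various subgroup actions.

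For the bi-invariance, I would observe that (i) for $\bar u \in \overline{U}_{C_{n}}$, which is lower-triangular unipotent in $Sp_{2n}$, left multiplication $g \mapsto \bar u g$ modifies the $i$-th row of $g$ only by adding linear combinations of rows $1,\ldots,i-1$; hence the first $n$ rows of $\bar u g$ depend only on the first $n$ rows of $g$, and in fact $\psi(\bar u g) = \bar u_{0}\,\psi(g)$ where $\bar u_{0} \in \overline{U}_{n}$ is the principal $n \times n$ submatrix of $\bar u$. (ii) Since $U_{C_{n-1}}$ fixes $e_{\pm n}$ by the embedding of $Sp_{2n-2}$ in $Sp_{2n}$, for $u \in U_{C_{n-1}}$ right multiplication $g \mapsto gu$ leaves the $n$-th and $(n{+}1)$-st columns of $g$ untouched while modifying only columns $1,\ldots,n-1$; this gives $\psi(gu) = \psi(g)\,u_{0}$ with $u_{0} \in U_{n-1}$ embedded as the upper-left block of $U_{n+1}$. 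These two identities imply $\psi^{*}(\calli{B}') \subset \calli{B}$.

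The grading compatibility follows from the same pattern with diagonal tori in place of unipotent subgroups: $\psi$ intertwines the left $T_{C_{n}}$-action on $Sp_{2n}$ with the left $T_{n}$-action on $M_{n,n+1}$ via the natural isomorphism $T_{C_{n}} \overset{\simeq}{\rightarrow} T_{n}$ sending $\mathrm{diag}(t_{1},\ldots,t_{n},t_{n}^{-1},\ldots,t_{1}^{-1})$ to $\mathrm{diag}(t_{1},\ldots,t_{n})$, and similarly intertwines the right $T_{C_{n-1}}$-action with the right $T_{n-1}$-action via the embedding $T_{n-1} \hookrightarrow T_{n+1}$ into the upper-left block. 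Since $V^{\lambda^{+}/\mu}$ and $W^{\lambda/\mu}$ are defined as joint weight spaces for these torus actions at matching weights, this yields $\psi^{*}(V^{\lambda^{+}/\mu}) \subset W^{\lambda/\mu}$, so $\psi^{*}$ is a map of $\Lambda_{n-1,n}$-graded algebras.

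For $SL_{2}$-equivariance, I would verify the identity $\psi(g\,\beta(x)) = \psi(g)\,\alpha(x)$ for $x \in SL_{2}$, which is essentially the calculation carried out in the proof of Lemma \ref{OtherIndiLemma}: since $\beta(x)$ acts as the identity outside the rows/columns labeled $n,n+1$, the columns $1,\ldots,n-1$ of $g\beta(x)$ equal those of $g$, while columns $n,n+1$ become the prescribed linear combinations of columns $n,n+1$ of $g$ --- exactly the prescription for $\psi(g)\alpha(x)$. This gives $\psi^{*}(x.f)=x.\psi^{*}(f)$ for $f \in \calli{O}(M_{n,n+1})$ and $x \in SL_{2}$. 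No substantive obstacle is anticipated; the lemma is a bookkeeping statement packaging several routine matrix identities, and the main point is the compatibility between the principal submatrix map and the particular embeddings $Sp_{2n-2} \subset Sp_{2n}$ and $GL_{n-1} \subset GL_{n+1}$ that define the two branching algebras.
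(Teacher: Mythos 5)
Your proposal is correct and follows essentially the same route as the paper: the bi-invariance via the block identity $\psi(\overline{u}gu)=\psi_{n}(\overline{u})\psi(g)\psi_{n+1}(u)$, the grading via the compatible torus actions (matching weights $-\lambda$ on the left and $\mu$ on the right), and the $SL_{2}$-equivariance via $\psi(g\beta(x))=\psi(g)\alpha(x)$, i.e.\ $\psi_{n+1}(\beta(x))=\alpha(x)$, exactly as in the paper's proof. Your write-up merely makes the row/column bookkeeping behind the paper's ``straight-forward computation with block matrices'' explicit.
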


\begin{proof}
Let $f \in \B', \overline{u} \in \overline{U}_{C_{n}}, g \in
Sp_{2n},$ and $u \in U_{C_{n-1}}$.  We must show that
$f(\psi(\overline{u}gu))=f(\psi(g))$.  Indeed, a straight-forward
computation using block matrices shows that
$$\psi(\overline{u}gu)=\psi_{n}(\overline{u})\psi(g)\psi_{n+1}(u).$$
Since clearly $\psi_{n}(\overline{u}) \in \overline{U}_{n}$ and
$\psi_{n+1}(u) \in U_{n-1}$, the first statement follows.

Now suppose $f \in V^{\lambda^{+} / \mu}$.  Let $t \in T_{C_{n}}$,
$s \in T_{C_{n-1}}$, and $g \in Sp_{2n}$.  Then
\begin{eqnarray*}
(t,s).\psi^{*}(f)(g) &=& \psi^{*}(f)(t^{-1}gs) \\&=&
f(\psi(t^{-1}gs)) \\&=& f(\psi_{n}(t^{-1})\psi(g)\psi_{n-1}(s))
\\&=& \psi_{n}(t)^{-\lambda}\psi_{n-1}(s)^{\mu}f(\psi(g)) \\&=&
t^{-\lambda}s^{\mu}f(\psi(g)) \\&=&
t^{-\lambda}s^{\mu}.\psi^{*}(f)(g).
\end{eqnarray*}
Therefore $\psi^{*}(f) \in W^{\lambda / \mu}$, and hence $\psi^{*}$
is graded.

Finally, we must show $\psi^{*}$ intertwines the $SL_{2}$-action.
Indeed, let $f \in \B'$, $x \in SL_{2}$, and $g \in Sp_{2n}$.  Then
another computation with block matrices shows that
$f(\psi(g)x)=f(\psi(gx))$. (Here one has to be careful to use the
correct embeddings of $SL_{2}$ in $GL_{n+1}$ and $Sp_{2n}$ that
define the corresponding actions.) Therefore,
$x.\psi^{*}(f)=\psi^{*}(x.f)$.
\end{proof}

By the above lemma,$\psi^{*}:\B' \rightarrow \B$ is a morphism of $\Lambda_{n-1,n}$-graded algebras.  To complete the proof of Theoreme \ref{TransferThm2} we must it is an isomorphism.

Let $U_{n,n+1}=\psi_{n,n+1}(U_{n+1})$.  We identify the affine
spaces $U_{n+1}$ and $U_{n,n+1}$ in the obvious way.  For $\lambda
\in \Lambda_{n}$, consider the embedding $Z_{\lambda^{+}}(U_{n+1})
\hookrightarrow \calli{O}(U_{n,n+1})$.  Denote the image of this
embedding by $Z_{\lambda^{+}}(U_{n,n+1})$.  (Similarly, denote by
$Z_{\lambda^{+} / \mu}(U_{n,n+1})$ the image of $Z_{\lambda^{+} /
\mu}(U_{n+1})$ under the embedding.) By transfer of structure,
$Z_{\lambda^{+}}(U_{n,n+1})$ is an irreducible $GL_{n+1}$-module of
highest weight $\lambda^{+}$, i.e. we decree that
$$
\psi_{n,n+1}^{*}:Z_{\lambda^{+}}(U_{n,n+1}) \rightarrow Z_{\lambda^{+}}(U_{n+1})
$$
is an isomorphism of $GL_{n+1}$-modules.  Of course, the inverse of this isomorphism is the map induced by the embedding
$\phi: U_{n,n+1} \rightarrow U_{n+1}$.
Combining this isomorphism with Lemma \ref{OtherIndiLemma} we obtain that
\begin{equation}
\label{SL2isom}
\psi_{n,n+1}^{*}:Z_{\lambda^{+} / \mu}(U_{n,n+1}) \rightarrow Z_{\lambda / \mu}(U_{C_{n}})
\end{equation}
is an isomorphism of $SL_{2}$-modules.

Consider now
$$
\B_{Z}'=\bigoplus_{(\mu,\lambda) \in \Lambda_{n-1,n}} Z_{\lambda^{+} / \mu}(U_{n,n+1}) \otimes t^{\overline{m}_{\lambda^{+}}}
$$
and
$$
\B_{Z}=\bigoplus_{(\mu,\lambda) \in \Lambda_{n-1,n}} Z_{\lambda / \mu}(U_{C_{n}}) \otimes t^{\overline{m}_{\lambda}}.
$$
These are subalgebras of $\mathcal{Z}_{GL_{n+1}}$ and $\mathcal{Z}_{Sp_{2n}}$, respectively (see (\ref{ZheloFlagAlg})).

We define a map
$
 \B_{Z}' \rightarrow \B_{Z}
$
which on graded components is simply given by
$$
\psi_{n,n+1}^{*}\otimes 1:Z_{\lambda^{+} / \mu}(U_{n,n+1}) \otimes t^{\overline{m}_{\lambda^{+}}} \rightarrow Z_{\lambda / \mu}(U_{C_{n}}) \otimes t^{\overline{m}_{\lambda}}.
$$
Let us denote the total map by $\psi_{n,n+1}^{*}\otimes 1 : \B_{Z}' \rightarrow \B_{Z}$ also.

By (\ref{SL2isom}) $\psi_{n,n+1}^{*}\otimes 1 : \B_{Z}' \rightarrow \B_{Z}$ is a isomorphism of $SL_{2}$-modules.  But clearly the map is a morphism of graded algebras, so in fact it is an isomorphism of $\Lambda_{n-1,n}$-graded $SL_{2}$-algebras.  Now, by Proposition \ref{ZheloRemark}, the restriction of $res : \mathcal{R}_{GL_{n+1}} \rightarrow \mathcal{Z}_{GL_{n+1}}$ to $\B'$ gives an isomorphism
$res: \B' \rightarrow \B_{Z}'$ of $\Lambda_{n-1,n}$-graded $SL_{2}$-algebras.  Similarly, we have the isomorphism $res: \B \rightarrow \B_{Z}$ of $\Lambda_{n-1,n}$-graded $SL_{2}$-algebras.

We now have a diagram:
\begin{eqnarray*}{}
  \B' & \longrightarrow & \B \\
  \downarrow &  & \downarrow \\
  \B_{Z}' & \longrightarrow & \B_{Z}
\end{eqnarray*}
where the vertical arrows are given by $res$, the bottom arrow is $\psi_{n,n+1}^{*}\otimes 1$, and the top arrow is $\psi^{*}$.  Clearly, this diagram commutes.  Indeed, this follows from the simple fact that for $f \in \calli{O}(M_{n,n+1})$,
$$
(f \circ \psi)|_{U_{C_{n}}}=f|_{U_{n,n+1}}\circ \psi_{n,n+1}
$$
as elements of $\calli{O}(U_{C_{n}})$.
Moreover, by the previous paragraph the bottom three maps are isomorphisms of $\Lambda_{n-1,n}$-graded $SL_{2}$-algebras by.  We conclude that $\psi^{*}:\B' \rightarrow \B$ is an isomorphism of $\Lambda_{n-1,n}$-graded $SL_{2}$-algebras, thus completing the proof of Theorem \ref{TransferThm2}.


\section{Proof of Proposition \ref{SpnSurjectivityCorollary} \label{SectionDIP}}
In this section we prove various structural results about the
multiplicity spaces $V_{\lambda / \mu}$ where $(\mu,\lambda) \in
\Lambda_{n-1,n+1}$.  Notice that these are multiplicity spaces that
occur in branching from $GL_{n+1}$ to $GL_{n-1}$.  By virtue of
Theorem \ref{TransferThm2}, these results have analogous in the
setting of branching of symplectic groups, and it is for this reason
that these results are important for us.

We will work in slightly greater generality than strictly necessary, and consider the semigroup
\begin{eqnarray*}
\Omega &=& \{ (\mu,\lambda) \in \Lambda_{n-1,n+1} : V_{\lambda /
\mu} \neq \{0\} \} \\ &=& \{ (\mu,\lambda) \in \Lambda_{n-1,n+1} :
\mu \ll \lambda \}.
\end{eqnarray*}
The second equality follows from (\ref{Eq1.2}).

\subsection{\label{section4.0}The rearrangement function}
We begin by introducing the \textbf{rearrangement function} on $\Omega$. $\ $%
Define $f:\Omega\rightarrow \Lambda_{2n}$ by:
\begin{equation*}
(\mu ,\lambda )\overset{f}{\mapsto }(x_{1},y_{1},...,x_{n},y_{n})
\end{equation*}%
where $\{x_{1}\geq y_{1}\geq \cdots \geq x_{n}\geq y_{n}\}$ is the
non-increasing rearrangement of $ (\mu _{1},...,\mu _{n-1},\lambda
_{1},...,\lambda _{n+1}). $
Notice that $f(\mu ,\lambda )$ equals%
\begin{equation*}
(\lambda _{1}\geq \max (\mu _{1},\lambda _{2})\geq \min (\mu _{1},\lambda
_{2})\geq \cdots \geq \max (\mu _{n-1},\lambda _{n})\geq \min (\mu
_{n-1},\lambda _{n})\geq \lambda _{n+1})\text{.}
\end{equation*}%
This easily implies:
\begin{lemma}
\label{simplelemma}Let $(\mu ,\lambda )\in \Omega$. \ Suppose $f(\mu
,\lambda )=(x_{1},y_{1},...,x_{n},y_{n})$ and $\gamma \in \Lambda_{n}$.
\ Then $\mu <\gamma <\lambda $ if, and only if, $y_{i}\leq \gamma _{i}\leq
x_{i}$ for $i=1,...,n$, where $\gamma=(\gamma_{1},...,\gamma_{n})$.
\end{lemma}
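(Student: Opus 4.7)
The plan is to translate the two interlacing conditions $\mu < \gamma$ and $\gamma < \lambda$ into simultaneous upper and lower bounds on each coordinate $\gamma_i$, and observe that these bounds coincide exactly with the $x_i$ and $y_i$ read off from the explicit form of $f(\mu,\lambda)$ stated in the excerpt.

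First I would expand the interlacing conditions. By definition, $\gamma < \lambda$ gives $\lambda_i \geq \gamma_i \geq \lambda_{i+1}$ for $i = 1,\ldots,n$, while $\mu < \gamma$ gives $\gamma_i \geq \mu_i \geq \gamma_{i+1}$ for $i = 1,\ldots,n-1$. Re-indexing the second family yields $\mu_{i-1} \geq \gamma_i$ for $2 \leq i \leq n$ and $\gamma_i \geq \mu_i$ for $1 \leq i \leq n-1$. Combining, for each $i$ in the range $2 \leq i \leq n-1$ one obtains $\min(\mu_{i-1},\lambda_i) \geq \gamma_i \geq \max(\mu_i,\lambda_{i+1})$; at the endpoints the condition degenerates to $\lambda_1 \geq \gamma_1 \geq \max(\mu_1,\lambda_2)$ and $\min(\mu_{n-1},\lambda_n) \geq \gamma_n \geq \lambda_{n+1}$.

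Now I would match these bounds against the explicit formula for $f(\mu,\lambda)$ given just before the lemma, namely
\begin{equation*}
(\lambda_1, \max(\mu_1,\lambda_2), \min(\mu_1,\lambda_2), \ldots, \max(\mu_{n-1},\lambda_n), \min(\mu_{n-1},\lambda_n), \lambda_{n+1}).
\end{equation*}
Reading off pairs $(x_i,y_i)$ in order, one sees $x_1 = \lambda_1$, $y_n = \lambda_{n+1}$, and for the middle indices $x_i = \min(\mu_{i-1},\lambda_i)$ and $y_i = \max(\mu_i,\lambda_{i+1})$. These are precisely the upper and lower bounds derived in the previous step, so the simultaneous inequalities $y_i \leq \gamma_i \leq x_i$ for $i=1,\ldots,n$ are literally equivalent to $\mu < \gamma < \lambda$. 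This direction of the argument is a pure bookkeeping matching.

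The only nontrivial point embedded here is that the displayed sequence really is the non-increasing rearrangement, i.e.\ is weakly decreasing; this is where the standing hypothesis $(\mu,\lambda) \in \Omega$, meaning $\mu \ll \lambda$, enters. Using the double-interlacing inequalities $\lambda_i \geq \mu_i \geq \lambda_{i+2}$ (together with the monotonicity of $\mu$ and $\lambda$ individually), one checks $\min(\mu_{i-1},\lambda_i) \geq \max(\mu_i,\lambda_{i+1})$ by comparing the four pairs of coordinates, and similarly at the endpoints. Since the paper already implicitly uses this when writing down the explicit form of $f(\mu,\lambda)$, the proof reduces to the matching above, and there is no real obstacle.
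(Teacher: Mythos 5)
Your proof is correct and follows the same route as the paper, which simply records the explicit form $f(\mu,\lambda)=(\lambda_1,\max(\mu_1,\lambda_2),\min(\mu_1,\lambda_2),\ldots,\max(\mu_{n-1},\lambda_n),\min(\mu_{n-1},\lambda_n),\lambda_{n+1})$ and states that the lemma "easily" follows. You have merely written out the coordinatewise matching and the monotonicity check (via $\mu\ll\lambda$) that the paper leaves implicit, so there is nothing to add.
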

For $\sigma \in \Sigma $ let $\Omega(\sigma)$ be the sub-semigroup of $\Omega$ consisting of the pairs of order type $\sigma$.  Let $f_{\sigma }$ denote the restriction of $f$ to $\Omega(\sigma)$.

\begin{lemma}
\label{semigroupisom}Let $\sigma \in \Sigma $. Then $f_{\sigma }:\Omega(\sigma)\rightarrow \Lambda_{2n}$
is a semigroup isomorphism.
\end{lemma}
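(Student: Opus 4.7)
The plan is to exhibit an explicit inverse to $f_\sigma$ and verify the semigroup homomorphism property, the latter relying on the key observation that restricting to a fixed order type makes $\max$ and $\min$ compatible with addition.

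First, from the explicit formula for $f$ stated just before the lemma, the $2n$-tuple $f(\mu,\lambda)$ consists of $\lambda_1$, $\lambda_{n+1}$, and the unordered pairs $\{\mu_i,\lambda_{i+1}\}$ packaged as $(\max(\mu_i,\lambda_{i+1}),\min(\mu_i,\lambda_{i+1}))$ in positions $2i$ and $2i+1$ for $i=1,\ldots,n-1$. Knowing the order type $\sigma$ tells us which of $\mu_i,\lambda_{i+1}$ is the max and which is the min, so I would write down an explicit candidate inverse $g_\sigma:\Lambda_{2n}\to\Lambda_{n-1,n+1}$: given $(x_1,y_1,\ldots,x_n,y_n)\in\Lambda_{2n}$, set $\lambda_1=x_1$, $\lambda_{n+1}=y_n$, and for each $i=1,\ldots,n-1$ let $(\mu_i,\lambda_{i+1})=(y_i,x_{i+1})$ if $\sigma_i=\text{``}\geq\text{''}$ and $(\mu_i,\lambda_{i+1})=(x_{i+1},y_i)$ otherwise.

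I would then verify that $g_\sigma$ lands inside $\Omega(\sigma)$, which follows quickly from the decreasing chain $x_1\geq y_1\geq x_2\geq\cdots\geq x_n\geq y_n\geq 0$: monotonicity of $\mu$ and $\lambda$, the order-type relation between $\mu_i$ and $\lambda_{i+1}$, and the double interlacing $\lambda_i\geq\mu_i\geq\lambda_{i+2}$ all reduce to comparing consecutive terms of this chain. A direct check then gives $f_\sigma\circ g_\sigma=\operatorname{id}$ and $g_\sigma\circ f_\sigma=\operatorname{id}$, so $f_\sigma$ is a bijection.

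For the semigroup homomorphism property, first note that $\Omega(\sigma)$ is closed under coordinate-wise addition: if $\mu_i\geq\lambda_{i+1}$ and $\mu_i'\geq\lambda_{i+1}'$ then $\mu_i+\mu_i'\geq\lambda_{i+1}+\lambda_{i+1}'$, and similarly in the $\leq$ case. The crucial additivity is then that when $(\mu,\lambda)$ and $(\mu',\lambda')$ share the order type $\sigma$, the functions $\max$ and $\min$ applied to $(\mu_i,\lambda_{i+1})$ and to $(\mu_i',\lambda_{i+1}')$ pick out the same-named coordinate in both pairs, so $\max(\mu_i+\mu_i',\lambda_{i+1}+\lambda_{i+1}')=\max(\mu_i,\lambda_{i+1})+\max(\mu_i',\lambda_{i+1}')$ and likewise for $\min$. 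Together with the trivial additivity on $\lambda_1$ and $\lambda_{n+1}$, this yields $f_\sigma((\mu,\lambda)+(\mu',\lambda'))=f_\sigma(\mu,\lambda)+f_\sigma(\mu',\lambda')$. The only conceptual point is this compatibility of $\max$ and $\min$ with addition under a common order type, which is precisely why fixing $\sigma$ is essential: without it the rearrangement function on all of $\Omega$ is not additive, and the rest is bookkeeping with interlacing inequalities.
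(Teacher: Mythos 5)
Your proof is correct and follows essentially the same route as the paper: both exploit that a fixed order type $\sigma$ turns the rearrangement into a fixed positional relabelling of the coordinates (the paper encodes this with indexing functions $a,b$, you phrase it as additivity of $\max$ and $\min$ under a common order type), and both exhibit the same explicit inverse and check it lands in $\Omega(\sigma)$ using the weakly decreasing chain.
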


\begin{proof}
For $(\mu ,\lambda )\in \Omega(\sigma)$ let $f_{\sigma }(\mu
,\lambda )=(f_{\sigma }(\mu ,\lambda )_{1},...,f_{\sigma }(\mu ,\lambda
)_{2n})$. \ Define functions
\begin{eqnarray*}
a:\{1,...,n-1\}\rightarrow\{1,...,2n\} \\
b:\{1,...,n+1\}\rightarrow\{1,...,2n\}
\end{eqnarray*}
by $b(1)=1$ and $b(n+1)=2n$, and for $i=1,...,n-1$
\begin{eqnarray*}
(\sigma _{i}\text{ is} &\geq &)\Longrightarrow a(i)=2i\text{ and }b(i+1)=2i+1
\\
(\sigma _{i}\text{ is} &\leq &)\Longrightarrow a(i)=2i+1\text{ and }b(i+1)=2i.
\end{eqnarray*}%
Then for all $%
(\mu ,\lambda )\in \Omega(\sigma)$%
\begin{eqnarray*}
\mu _{i} &=&f_{\sigma }(\mu ,\lambda )_{a(i)}\text{ for }i=1,...,n-1 \\
\lambda _{j} &=&f_{\sigma }(\mu ,\lambda )_{b(j)}\text{ for }j=1,...,n+1.
\end{eqnarray*}
This implies that $f_{\sigma }$ is an injective semigroup homomorphism.

Now suppose $(z_{1},...,z_{2n})\in \Lambda_{2n}$ is given. \ Define $\mu $ and $\lambda $ by the formulas
\begin{eqnarray*}
\mu _{i} &=&z_{a(i)}\text{ for }i=1,...,n-1 \\
\lambda _{j} &=&z_{b(j)}\text{ for }j=1,...,n+1\text{.}
\end{eqnarray*}%
Since $a(1)<a(2)<\cdots <a(n-1)$, it follows that $\mu \in \Lambda_{n-1}$. \ Similarly, $%
\lambda \in \Lambda _{n+1}^{+}$. \ Since $b(i)<a(i)<b(i+2)$, we get that $%
\mu \ll \lambda $. \ Finally, suppose $\sigma _{i}$ is $\geq $. \ Then $%
a(i)<b(i+1)$, and so $\mu _{i}\geq \lambda _{i+1}$. \ Similarly, if $\sigma
_{i}$ is $\leq $ then $\mu _{i}\leq \lambda _{i+1}$. \ Therefore $(\mu
,\lambda )\in \Omega(\sigma)$. \ Since $f_{\sigma }(\mu ,\lambda
)=(z_{1},...,z_{2n})$ we conclude that $f_{\sigma }$ is surjective.
\end{proof}

\begin{lemma}
\label{peelinglemma}Let $\sigma \in \Sigma $ and let $(\mu ,\lambda ),(\mu
^{\prime },\lambda ^{\prime })\in \Omega(\sigma)$. \ Suppose that $%
\gamma \in \Lambda_{n}$ satisfies%
\begin{equation*}
\mu +\mu ^{\prime }<\gamma <\lambda +\lambda ^{\prime }\text{.}
\end{equation*}%
Then there exist $\nu $,$\nu ^{\prime }\in \Lambda_{n}$ such that $%
\gamma =\nu +\nu ^{\prime }$, $\mu <\nu <\lambda $, and $\mu ^{\prime }<\nu
^{\prime }<\lambda ^{\prime }$.
\end{lemma}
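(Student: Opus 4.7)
The plan is to reduce the entire problem to a coordinatewise claim by applying the rearrangement function $f_\sigma$, which by Lemma~\ref{semigroupisom} is a semigroup isomorphism on $\Omega(\sigma)$. Write $f_\sigma(\mu,\lambda) = (x_1,y_1,\ldots,x_n,y_n)$ and $f_\sigma(\mu',\lambda') = (x_1',y_1',\ldots,x_n',y_n')$; then $f_\sigma(\mu+\mu',\lambda+\lambda') = (x_1+x_1',y_1+y_1',\ldots,x_n+x_n',y_n+y_n')$. By Lemma~\ref{simplelemma} applied to $(\mu+\mu',\lambda+\lambda')$, the hypothesis $\mu+\mu' < \gamma < \lambda+\lambda'$ translates precisely to
\begin{equation*}
y_i + y_i' \;\leq\; \gamma_i \;\leq\; x_i + x_i' \qquad (i=1,\ldots,n),
\end{equation*}
and the desired conclusions $\mu < \nu < \lambda$ and $\mu' < \nu' < \lambda'$ translate to $y_i \leq \nu_i \leq x_i$ and $y_i' \leq \nu_i' \leq x_i'$ for every $i$.

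Next I would argue that the weak-monotonicity requirements $\nu \in \Lambda_n$ and $\nu' \in \Lambda_n$ come for free from these bounds. Since $f_\sigma(\mu,\lambda)$ is weakly decreasing, one has in particular $y_i \geq x_{i+1}$, hence $\nu_i \geq y_i \geq x_{i+1} \geq \nu_{i+1}$, and likewise $\nu_i' \geq \nu_{i+1}'$. Thus the decomposition problem decouples into $n$ independent one-variable problems.

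For each fixed $i$, I would need to find $\nu_i \in [y_i,x_i]$ with $\gamma_i - \nu_i \in [y_i', x_i']$, that is, $\nu_i \in [\max(y_i,\gamma_i-x_i'),\ \min(x_i,\gamma_i-y_i')]$. Nonemptiness of this interval reduces to four inequalities, each immediate from the setup: $y_i \leq x_i$ and $y_i' \leq x_i'$ (rearrangement), together with $y_i + y_i' \leq \gamma_i$ and $\gamma_i \leq x_i + x_i'$ (the translated hypothesis). Choosing any $\nu_i$ in this interval and setting $\nu_i' = \gamma_i - \nu_i$ produces the desired $\nu,\nu'$.

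There is no real obstacle here; the only conceptual point is recognizing that both the hypothesis on $\gamma$ and the desired conclusions become pure coordinatewise inequalities under $f_\sigma$, so that the common order type $\sigma$ is precisely what allows the sum to be ``peeled'' term by term. Without a fixed order type, one could not align the two rearrangements $f(\mu,\lambda)$ and $f(\mu',\lambda')$ with $f(\mu+\mu',\lambda+\lambda')$, and the coordinatewise reduction would fail.
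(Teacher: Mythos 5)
Your proposal is correct and follows essentially the same route as the paper: apply $f_\sigma$ (Lemma \ref{semigroupisom}) to reduce the hypothesis and conclusions, via Lemma \ref{simplelemma}, to coordinatewise inequalities, and then split $\gamma_i = \nu_i + \nu_i'$ term by term. The only difference is that you spell out the interval-nonemptiness and weak-decrease checks that the paper dismisses with ``now choose'' and ``clearly,'' which is a harmless (indeed welcome) elaboration.
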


\begin{proof}
Set $f_{\sigma }(\mu ,\lambda )=(x_{1},y_{1},...,x_{n},y_{n})$ and $%
f_{\sigma }(\mu ^{\prime },\lambda ^{\prime })=(x_{1}^{\prime
},y_{1}^{\prime },...,x_{n}^{\prime },y_{n}^{\prime })$. \ By Lemma \ref%
{semigroupisom},
\begin{equation*}
f_{\sigma }(\mu +\mu ^{\prime },\lambda +\lambda ^{\prime
})=(x_{1}+x_{1}^{\prime },y_{1}+y_{1}^{\prime },...,x_{n}+x_{n}^{\prime
},y_{n}+y_{n}^{\prime })\text{.}
\end{equation*}%
Therefore by Lemma \ref{simplelemma}, $y_{i}+y_{i}^{\prime }\leq \gamma
_{i}\leq x_{i}+x_{i}^{\prime }$. \ Now choose $\nu _{i},\nu _{i}^{\prime }$ such that $\gamma _{i}=\nu _{i}+\nu
_{i}^{\prime }$, $y_{i}\leq \nu _{i}\leq x_{i}$, and $y_{i}^{\prime }\leq
\nu _{i}^{\prime }\leq x_{i}^{\prime }$.  \ Set $\nu =(\nu _{1},...,\nu _{n})$
and $\nu ^{\prime }=(\nu _{1}^{\prime },...,\nu _{n}^{\prime })$. \ Clearly $%
\nu ,\nu ^{\prime }\in \Lambda_{n}$ and $\gamma =\nu +\nu ^{\prime }$.
\ Moreover, by Lemma \ref{simplelemma}, $\mu <\nu <\lambda $, and $\mu
^{\prime }<\nu ^{\prime }<\lambda ^{\prime }$.
\end{proof}

\subsection{\label{section4.1}Proof of Proposition \ref{Prop1.2}}
For $(\mu,\lambda )\in \Omega$ recall that $r_{i}(\mu ,\lambda
)=x_{i}-y_{i}$, where $f(\mu ,\lambda
)=(x_{1},y_{1},...,x_{n},y_{n})$.  Let $(\mu,\lambda) \in \Lambda
_{n-1,n+1}$.  Then as a $GL_{1}\times GL_{1}$-module,
\begin{equation*}
V^{\lambda / \mu}\cong \bigoplus_{\substack{ \gamma \in \Lambda_{n}
\\ \mu <\gamma <\lambda }}V^{\gamma / \mu}\otimes V^{\lambda / \gamma}%
\text{.}
\end{equation*}%
Now, $d(q)=
\left( \begin{array}{cc}
q & 0  \\
0 & q^{-1} \end{array} \right)$ acts on $V^{\gamma / \mu}\otimes
V^{\lambda / \gamma}$ by the scalar $q^{2\left\vert \gamma
\right\vert -\left\vert \lambda \right\vert -\left\vert \mu
\right\vert }$. \ Moreover, by (\ref{Eq1.1}) and (\ref{Eq1.2}),
$\dim V^{\gamma / \mu}\otimes V^{\lambda / \gamma}=1$ for $\gamma $
such that $\mu <\gamma <\lambda $. \ Therefore the character equals
\begin{equation*}
ch(V^{\lambda / \mu})=\sum_{\substack{ \gamma \in \Lambda_{n}  \\
\mu <\gamma <\lambda }}q^{2\left\vert \gamma \right\vert -\left\vert
\lambda \right\vert -\left\vert \mu \right\vert }\text{.}
\end{equation*}%
Set $f(\mu ,\lambda )=(x_{1},y_{1},...,x_{n},y_{n})$ and $%
r_{i}=r_{i}(\mu ,\lambda )$.\ \ By Lemma \ref{simplelemma},%
\begin{eqnarray*}
\sum_{\substack{ \gamma \in \Lambda_{n}  \\ \mu <\gamma <\lambda }}%
q^{2\left\vert \gamma \right\vert -\left\vert \lambda \right\vert
-\left\vert \mu \right\vert } &=&\sum_{0\leq j_{i}\leq r_{i}}q^{2(y_{1}+\cdots +y_{n}+j_{1}+\cdots
+j_{n})-(x_{1}+\cdots +x_{n}+y_{1}+\cdots +y_{n})} \\
&=&\sum_{0\leq j_{i}\leq r_{i}}q^{(-r_{1}+2j_{1})+\cdots +(-r_{n}+2j_{n})} \\
&=&\prod_{i=1}^{n}\sum_{j=0}^{r_{i}}q^{-r_{i}+2j} \\
&=&\prod_{i=1}^{n}ch(F^{r_{i}})
\end{eqnarray*}%
Therefore $ch(V^{\lambda / \mu})=ch\left(
\bigotimes_{i=1}^{n}F^{r_{i}(\mu ,\lambda )}\right) $.  This proves
Proposition \ref{Prop1.2}.

\subsection{\label{section4.2}A technical lemma}

Suppose $(\gamma,\lambda) \in \Lambda _{n,n+1}$. \ We may view
$V^{\lambda }$ as a $GL_{n}$-module by
restriction, and, as such, define $V^{\lambda }[\gamma ]$ to be the $\gamma $%
-isotypic component of $V^{\lambda }$. \ Let $p_{\gamma }^{\lambda
}:V^{\lambda }\rightarrow V^{\lambda }[\gamma ]$ be the
corresponding projection.

Before stating and proving the lemma we first make a simple observation.  Suppose $\Theta$ is a semigroup and $\mathcal{V}$,$\mathcal{W}$ are $%
\Theta$-graded vector spaces:%
\begin{equation*}
\mathcal{V}=\bigoplus_{i\in \Theta}V_{i} \text{, } \mathcal{W}=\bigoplus_{i\in \Theta}W_{i}.
\end{equation*}%
Suppose there are linear maps $\pi _{i,j}:V_{i}\otimes V_{j}\rightarrow
V_{i+j}$ and $\tau _{i,j}:W_{i}\otimes W_{j}\rightarrow W_{i+j}$ for every $%
i,j\in \Theta$. \ We refer to these maps as ``products'' on the
vector spaces. \ Finally, suppose also there is an $\Theta$-graded
isomorphism $T:\mathcal{V}\rightarrow \mathcal{W}$ that preserves the products on $\mathcal{V}$ and $\mathcal{W}$ in
the following sense: for all $i,j\in \Theta$ the following diagram
commutes:
\begin{equation*}
\begin{array}{ccc}
V_{i}\otimes V_{j} & \overset{\pi _{i,j}}{\longrightarrow } & V_{i+j} \\
\downarrow T &  & \downarrow T \\
W_{i}\otimes W_{j} & \overset{\tau _{i,j}}{\longrightarrow } & W_{i+j}%
\end{array}%
\end{equation*}%
Then if $x\in V_{i}$ and $y\in V_{j}$ and $\tau _{i,j}(T(x)\otimes
T(y))\not=0$, then $\pi _{i,j}(x\otimes y)\not=0$. \

For the purposes of the following lemma we will use the branching semigroup
\begin{equation*}
\Theta=\{(\gamma ,\lambda )\in
\Lambda_{n,n+1}:\gamma <\lambda \}.
\end{equation*}
We introduce three $\Theta$-graded vector spaces, each of which is equipped
with product maps.

The first space is $\mathcal{V}_{1}=\dbigoplus\limits_{(\gamma ,\lambda )\in \Theta%
}V^{\lambda }[\gamma ]$.  The product is defined as follows: for
$x\in V^{\lambda }[\gamma ]$ and $x^{\prime }\in V^{\lambda ^{\prime
}}[\gamma ^{\prime }]$, define $$xx^{\prime }=p_{\nu +\nu ^{\prime
}}^{\lambda +\lambda ^{\prime }}(\pi _{\lambda ,\lambda ^{\prime
}}(x\otimes x^{\prime }))\text{.}$$

The second space is $\mathcal{V}_{2}=\dbigoplus\limits_{(\gamma
,\lambda )\in \Theta}V^{\gamma }\otimes Hom_{GL_{n}}(V^{\gamma
},V^{\lambda })$.  The product is defined as follows: for $v\otimes
f\in V^{\gamma }\otimes Hom_{GL_{n}}(V^{\gamma },V^{\lambda })$ and
$v^{\prime }\otimes f^{\prime }\in V^{\gamma ^{\prime }}\otimes
Hom_{GL_{n}}(V^{\gamma ^{\prime }},V^{\lambda ^{\prime }})$, define
\begin{equation*}
(v\otimes f)(v^{\prime }\otimes f^{\prime })=\pi _{\gamma ,\gamma ^{\prime
}}(v\otimes v^{\prime })\otimes (\pi _{\lambda ,\lambda ^{\prime }}\circ
(f\otimes f^{\prime })\circ j_{\gamma ,\gamma ^{\prime }})\text{.}
\end{equation*}%

The third space is $\mathcal{V}_{3}=\dbigoplus\limits_{(\gamma
,\lambda )\in \Theta}V^{\gamma }\otimes (V^{\lambda
})^{U_{n}}(\gamma )$.  The product is defined as follows: for
$v\otimes w\in V^{\gamma }\otimes (V^{\lambda })^{U_{n}}(\gamma )$
and $v^{\prime }\otimes w^{\prime }\in V^{\gamma ^{\prime }}\otimes
(V^{\lambda ^{\prime }})^{U_{n}}(\gamma ^{\prime })$, define
\begin{equation*}
(v\otimes w)(v^{\prime }\otimes w^{\prime })=\pi _{\gamma ,\gamma ^{\prime
}}(v\otimes v^{\prime })\otimes \pi _{\lambda ,\lambda ^{\prime }}(w\otimes
w^{\prime })\text{.}
\end{equation*}

Finally, we can state and prove the lemma.  We note that it can also be obtained as a consequence of Theorem 1 in \cite{Vin}.  We include an elementary proof for the sake of completeness.

\begin{lemma}
\label{projection lemma}Let $(\nu,\lambda),(\nu',\lambda') \in \Lambda_{n,n+1}$. \ Suppose that $%
0\not=x\in V^{\lambda }[\nu ]$ and $0\not=x^{\prime }\in V^{\lambda
^{\prime }}[\nu ^{\prime }]$. \ Then $p_{\nu +\nu ^{\prime
}}^{\lambda +\lambda ^{\prime }}(\pi _{\lambda ,\lambda ^{\prime
}}(x\otimes x^{\prime }))\not=0$.
\end{lemma}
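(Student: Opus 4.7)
The strategy, guided by the author's setup of the three graded spaces $\mathcal{V}_1, \mathcal{V}_2, \mathcal{V}_3$, is to exhibit product-preserving graded isomorphisms $\mathcal{V}_1 \cong \mathcal{V}_2 \cong \mathcal{V}_3$ and then verify that in $\mathcal{V}_3$ the product of the elements corresponding to $x$ and $x'$ is nonzero. The observation preceding the lemma then transports the nonvanishing back to $\mathcal{V}_1$, yielding $p_{\nu+\nu'}^{\lambda+\lambda'}(\pi_{\lambda,\lambda'}(x \otimes x')) \neq 0$.

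For the isomorphism $\mathcal{V}_2 \to \mathcal{V}_1$ I would use $v \otimes f \mapsto f(v)$; this is a well-defined graded linear isomorphism because $\dim \mathrm{Hom}_{GL_n}(V^\gamma, V^\lambda) = 1$ whenever $\gamma < \lambda$ (multiplicity-free branching $GL_{n+1} \downarrow GL_n$), and because a nonzero map $f: V^\gamma \to V^\lambda$ is automatically injective. For $\mathcal{V}_2 \to \mathcal{V}_3$ I would use evaluation at the canonical highest weight vector, $v \otimes f \mapsto v \otimes f(v_\gamma)$; this is an isomorphism since $(V^\lambda)^{U_n}(\gamma)$ is one-dimensional and spanned by $\phi(v_\gamma)$ for any nonzero $\phi \in \mathrm{Hom}_{GL_n}(V^\gamma, V^\lambda)$.

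The main technical obstacle is verifying that these isomorphisms intertwine the three product structures. The map $\mathcal{V}_2 \to \mathcal{V}_3$ is readily checked to be product-preserving using the defining property $j_{\nu,\nu'}(v_{\nu+\nu'}) = v_\nu \otimes v_{\nu'}$ of the Cartan embedding. For $\mathcal{V}_2 \to \mathcal{V}_1$, with $x = f(v)$ and $x' = f'(v')$, the required identity is
\begin{equation*}
p_{\nu+\nu'}^{\lambda+\lambda'}\bigl(\pi_{\lambda,\lambda'}((f\otimes f')(v \otimes v'))\bigr) = \pi_{\lambda,\lambda'}\bigl((f\otimes f')(j_{\nu,\nu'}\pi_{\nu,\nu'}(v \otimes v'))\bigr).
\end{equation*}
Writing $v \otimes v' = j_{\nu,\nu'}\pi_{\nu,\nu'}(v \otimes v') + k$ with $k$ in the sum of $GL_n$-summands of $V^\nu \otimes V^{\nu'}$ of type $\tau \neq \nu+\nu'$, the $GL_n$-equivariance of $(f \otimes f')$ and of $\pi_{\lambda,\lambda'}$ ensures that $\pi_{\lambda,\lambda'}((f \otimes f')(k))$ lies in the sum of the corresponding non-$(\nu+\nu')$ $GL_n$-isotypes of $V^{\lambda+\lambda'}$, where it is killed by $p_{\nu+\nu'}^{\lambda+\lambda'}$.

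The nonvanishing in $\mathcal{V}_3$ is then essentially automatic. A nonzero $x \in V^\lambda[\nu]$ corresponds under the composite isomorphism to $v \otimes w$ with $v \in V^\nu$ and $w \in (V^\lambda)^{U_n}(\nu)$ both nonzero (once the one-dimensional multiplicity space is trivialized), and similarly for $x'$. The $\mathcal{V}_3$-product $\pi_{\nu,\nu'}(v \otimes v') \otimes \pi_{\lambda,\lambda'}(w \otimes w')$ equals $(vv') \otimes (ww')$, where the Cartan products are literal multiplication in $\mathcal{R}_{GL_n}$ and $\mathcal{R}_{GL_{n+1}}$, each of which is an integral domain. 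Hence neither factor vanishes, completing the argument.
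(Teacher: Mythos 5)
Your proposal is correct and follows essentially the same route as the paper's own proof: the same maps $v\otimes f\mapsto f(v)$ and $v\otimes f\mapsto v\otimes f(v_{\gamma})$ between $\mathcal{V}_{1}$, $\mathcal{V}_{2}$, $\mathcal{V}_{3}$, the same isotypic-decomposition argument (your $j_{\nu,\nu'}\pi_{\nu,\nu'}(v\otimes v')+k$ is the paper's $z_{0}+z_{1}$) to verify product-preservation, and the same final appeal to $\mathcal{R}_{GL_{n}}$ and $\mathcal{R}_{GL_{n+1}}$ having no zero divisors before transporting the nonvanishing back to $\mathcal{V}_{1}$ via the preliminary observation.
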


\begin{proof}
Define $T:\mathcal{V}_{2}\rightarrow \mathcal{V}_{1}$ by $T(v\otimes f)=f(v)$. \ This is
clearly a linear isomorphism. \ Let $v\otimes f$,$v^{\prime }\otimes
f^{\prime }$ be chosen as in the definition of $\mathcal{V}_{2}$. \ Then
\begin{equation*}
T((v\otimes f)(v^{\prime }\otimes f^{\prime }))=(\pi _{\lambda ,\lambda
^{\prime }}\circ (f\otimes f^{\prime })\circ j_{\gamma ,\gamma ^{\prime
}})(\pi _{\gamma ,\gamma ^{\prime }}(v\otimes v^{\prime }))
\end{equation*}%
while
\begin{equation*}
T(v\otimes f)T(v^{\prime }\otimes f^{\prime })=p_{\gamma +\gamma ^{\prime
}}^{\lambda +\lambda ^{\prime }}(\pi _{\lambda ,\lambda ^{\prime
}}(f(v)\otimes f(v^{\prime }))).
\end{equation*}%
Let $z=v\otimes v^{\prime }$. \ Since $%
z\in V^{\gamma }\otimes V^{\gamma ^{\prime }}$ we can write
$z=z_{0}+z_{1}$, where $z_{0}\in (V^{\gamma }\otimes V^{\gamma
^{\prime }})[\gamma +\gamma ^{\prime }]$ and $z_{1}\in \sum_{\tau
\not=\gamma +\gamma ^{\prime }}(V^{\gamma }\otimes V^{\gamma
^{\prime }})[\tau ]$. \ By the definition of $\pi _{\gamma ,\gamma
^{\prime }}$ and $j_{\gamma ,\gamma ^{\prime }}$, the composition
$j_{\gamma ,\gamma ^{\prime }}\circ \pi _{\gamma ,\gamma ^{\prime
}}$ is the projection of $V^{\gamma }\otimes V^{\gamma ^{\prime }}$
onto its isotypic component $(V^{\gamma }\otimes V^{\gamma ^{\prime
}})[\gamma +\gamma ^{\prime }]$. \ Therefore,%
\begin{equation*}
(\pi _{\lambda ,\lambda ^{\prime }}\circ (f\otimes f^{\prime })\circ
j_{\gamma ,\gamma ^{\prime }})(\pi _{\gamma ,\gamma ^{\prime }}(z))=(\pi
_{\lambda ,\lambda ^{\prime }}\circ (f\otimes f^{\prime }))(z_{0})\text{.}
\end{equation*}%
On the other hand,
\begin{eqnarray*}
p_{\gamma +\gamma ^{\prime }}^{\lambda +\lambda ^{\prime }}(\pi _{\lambda
,\lambda ^{\prime }}(f(v)\otimes f(v^{\prime }))) &=&p_{\gamma +\gamma ^{\prime }}^{\lambda +\lambda ^{\prime }}(\pi _{\lambda
,\lambda ^{\prime }}((f\otimes f^{\prime })(z_{0})))+p_{\gamma +\gamma
^{\prime }}^{\lambda +\lambda ^{\prime }}(\pi _{\lambda ,\lambda ^{\prime
}}((f\otimes f^{\prime })(z_{1}))) \\
&=&\pi _{\lambda ,\lambda ^{\prime }}((f\otimes f^{\prime })(z_{0}))\text{.}
\end{eqnarray*}%
Therefore $T$ preserves the products on $\mathcal{V}_{1}$ and $\mathcal{V}_{2}$.

Next define $S:\mathcal{V}_{2}\rightarrow \mathcal{V}_{3}$ by $%
S(v\otimes f)=v\otimes f(v_{\gamma })$ (recall that $v_{\gamma }$ is
the canonical highest weight vector in $V^{\gamma }$). \ This is
clearly a linear isomorphism. \ We show $S$ preserves the product
maps. \ Let $v\otimes f$,$v^{\prime }\otimes f^{\prime }$ be chosen
as in the definition of $\mathcal{V}_{2}$. \ Then
\begin{eqnarray*}
S((v\otimes f)(v^{\prime }\otimes f^{\prime }))
&=&S(\pi _{\gamma ,\gamma
^{\prime }}(v\otimes v^{\prime })\otimes (\pi _{\lambda ,\lambda ^{\prime
}}\circ (f\otimes f^{\prime })\circ j_{\gamma ,\gamma ^{\prime }})) \\
&=&\pi_{\gamma ,\gamma ^{\prime }}(v\otimes v^{\prime })\otimes \pi _{\lambda
,\lambda ^{\prime }}(f(v_{\gamma })\otimes f(v_{\gamma ^{\prime }})) \\
&=& S(v\otimes f)S(v^{\prime }\otimes f^{\prime }).
\end{eqnarray*}%
Therefore $S$ preserves the products on $\mathcal{V}_{2}$ and $\mathcal{V}_{3}$.

Now $S\circ T^{-1}$ is a graded isomorphism of $\mathcal{V}_{1}$ and
$\mathcal{V}_{3}$ that respects products. \ Consider $0\not=x\in
V^{\lambda }[\nu ]$. \ Under the isomorphism $S\circ T^{-1}$, $x$ is
mapped to a simple tensor $v\otimes w$. \ Indeed, by (\ref{Eq1.1})
$\dim (V^{\lambda })^{U_{n}}(\nu )=1$, and $x$ is mapped to the
summand $V^{\nu
}\otimes (V^{\lambda })^{U_{n}}(\nu )$. \ Similarly, $%
0\not=x^{\prime }\in V^{\lambda ^{\prime }}[\nu ^{\prime }]$ is
mapped to a simple tensor $v^{\prime }\otimes w^{\prime }$. \ By the
definition of multiplication in $\calli{V}_{3}$, we have that
$(v\otimes w)(v^{\prime }\otimes w^{\prime })=\pi _{\nu ,\nu
^{\prime }}(v\otimes v^{\prime })\otimes \pi _{\lambda ,\lambda
^{\prime }}(w\otimes w^{\prime })$. \ Now, $\pi _{\nu ,\nu ^{\prime
}}(v\otimes v^{\prime })$ (resp. $\pi _{\lambda ,\lambda
^{\prime }}(w\otimes w^{\prime })$) is simply the product of $v$ and $%
v^{\prime }$ (resp. $w$ and $w^{\prime }$) in $\mathcal{R}_{GL_{n}}$ (resp. $%
\mathcal{R}_{GL_{n+1}}$). \ Since $\mathcal{R}_{GL_{n}}$ (resp. $\mathcal{R}_{GL_{n+1}}$) has no zero divisors, it follows that $%
(v\otimes w)(v^{\prime }\otimes w^{\prime })\not=0$. \ By the observation
above we conclude that $xx^{\prime }\not=0$ in $\mathcal{V}_{1}$, i.e. $p_{\gamma
+\gamma ^{\prime }}^{\lambda +\lambda ^{\prime }}(\pi _{\lambda ,\lambda
^{\prime }}(x\otimes x^{\prime }))\not=0$.
\end{proof}

\subsection{Proof of Proposition \ref{SpnSurjectivityCorollary}}
Let $(%
\mathfrak{t}_{n}^{\ast })_{\mathbb{R}}$ be the real form of $\mathfrak{t}%
_{n}^{\ast }$ spanned by $\{\varepsilon _{i}:i=1,...,n\}$, where
$\varepsilon_{i}$ is the functional mapping a diagonal matrix to its
$i^{th}$ entry. \ Let $(\cdot ,\cdot )$ be the inner product on
$(\mathfrak{t}_{n}^{\ast })_{\mathbb{R}}$ defined by $(\varepsilon
_{i},\varepsilon _{j})=\delta _{ij}$, and let $\left\Vert \gamma
\right\Vert ^{2}=(\gamma ,\gamma )$ define the associated norm. \
Denote by $\preceq $ the positive root ordering on
$\mathfrak{t}_{n}^{\ast }$, defined relative to the set of positive
roots: $\{\varepsilon _{i}-\varepsilon _{j}:i<j\}$. \ In other
words, $\alpha \preceq \beta $ means $\beta -\alpha $ is a
nonnegative integer combination of positive roots. \ Recall that for
$\nu ,\nu ^{\prime },\gamma \in \Lambda_{n}$,
$Hom_{GL_{n}}(V^{\gamma },V^{\nu }\otimes V^{\nu ^{\prime
}})\not=\{0\}$ implies $\gamma \preceq \nu +\nu ^{\prime }$.

By Theorem \ref{TransferThm2}, Proposition \ref{SpnSurjectivityCorollary} is an immediate corollary of the following result:

\begin{proposition}
\label{SurjectivityLemmaGL}Let $\sigma \in \Sigma $ and let $(\mu ,\lambda
),(\mu ^{\prime },\lambda ^{\prime })\in \Omega(\sigma)$. \ Then the
map%
\begin{equation*}
V^{\lambda / \mu}\otimes V^{\lambda' / \mu'}\overset{%
\pi _{\lambda ,\lambda ^{\prime }}}{\rightarrow }V^{\lambda+\lambda'
/ \mu+\mu'}
\end{equation*}%
is surjective.
\end{proposition}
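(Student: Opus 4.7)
The plan is to factor the branching from $GL_{n+1}$ to $GL_{n-1}$ through the intermediate subgroup $GL_{n}$, where everything is multiplicity-free, and then exploit a triangularity furnished by Lemmas \ref{peelinglemma} and \ref{projection lemma}. Because the classical branching $GL_n \subset GL_{n+1}$ and $GL_{n-1} \subset GL_n$ are each multiplicity-free, for every $\gamma \in \Lambda_n$ with $\mu < \gamma < \lambda$ the intersection of $V^\lambda[\gamma]$ with the $(U_{n-1},\mu)$-isotypic subspace of $V^\lambda$ is one-dimensional. This yields a canonical basis $\{e_\gamma : \mu < \gamma < \lambda\}$ of $V^{\lambda / \mu}$, with $e_\gamma \in V^\lambda[\gamma]$, and analogously for $V^{\lambda' / \mu'}$ and $V^{\lambda+\lambda' / \mu+\mu'}$. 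Surjectivity of $\pi_{\lambda,\lambda'}$ thus reduces to showing that the vectors $\pi_{\lambda,\lambda'}(e_\nu \otimes e_{\nu'})$ span the target.

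Fix $\gamma$ in the index set of the target, so that $\mu+\mu' < \gamma < \lambda+\lambda'$. Because $(\mu,\lambda)$ and $(\mu',\lambda')$ share the order type $\sigma$, Lemma \ref{peelinglemma} produces a splitting $\gamma = \nu + \nu'$ with $\mu < \nu < \lambda$ and $\mu' < \nu' < \lambda'$. Now analyze $\pi_{\lambda,\lambda'}(e_\nu \otimes e_{\nu'})$. As a $GL_n$-module, $V^\lambda[\nu] \otimes V^{\lambda'}[\nu'] \cong V^\nu \otimes V^{\nu'}$, whose irreducible constituents $V^\tau$ all satisfy $\tau \preceq \nu + \nu'$. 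Consequently, expanding
\begin{equation*}
\pi_{\lambda,\lambda'}(e_\nu \otimes e_{\nu'}) = \sum_{\tau} c_\tau e_\tau
\end{equation*}
in the basis above, one obtains $c_\tau = 0$ whenever $\tau \not\preceq \nu + \nu'$. On the other hand, Lemma \ref{projection lemma} applied to $e_\nu$ and $e_{\nu'}$ gives $p_{\nu+\nu'}^{\lambda+\lambda'}(\pi_{\lambda,\lambda'}(e_\nu \otimes e_{\nu'})) \neq 0$. Since this projection sits in $V^{\lambda+\lambda'}[\nu+\nu'] \cong V^{\nu+\nu'}$ and is $U_{n-1}$-invariant of weight $\mu+\mu'$, it lies in a one-dimensional space spanned by $e_{\nu+\nu'}$; hence $c_{\nu+\nu'} \neq 0$.

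Finally, extend $\preceq$ to a total order on the finite index set $\{\gamma : \mu+\mu' < \gamma < \lambda+\lambda'\}$. For each such $\gamma$ pick one splitting $\gamma = \nu(\gamma) + \nu'(\gamma)$ via Lemma \ref{peelinglemma}, and set $f_\gamma = \pi_{\lambda,\lambda'}(e_{\nu(\gamma)} \otimes e_{\nu'(\gamma)})$. The previous paragraph shows $f_\gamma = c_{\gamma,\gamma} e_\gamma + \sum_{\tau \prec \gamma} c_{\gamma,\tau} e_\tau$ with $c_{\gamma,\gamma} \neq 0$, so the $f_\gamma$ form a triangular system in the basis $\{e_\gamma\}$ with nonvanishing diagonal. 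They therefore span $V^{\lambda+\lambda' / \mu+\mu'}$, proving surjectivity. The main obstacle in this plan is the combinatorial splitting step in Lemma \ref{peelinglemma}, which is precisely where the shared order type hypothesis is indispensable; once the splitting is available, the triangularity with respect to $\preceq$ and the explicit non-vanishing of the leading coefficient via Lemma \ref{projection lemma} make surjectivity immediate.
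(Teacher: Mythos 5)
Your proof is correct and follows essentially the same route as the paper's: both decompose the multiplicity spaces into one-dimensional $GL_{n}$-isotypic pieces, invoke Lemma \ref{peelinglemma} (the only place the common order type is used) to split each target index $\gamma=\nu+\nu'$, and invoke Lemma \ref{projection lemma} to get a nonzero leading component with respect to the dominance order $\preceq$. The only difference is packaging: the paper organizes the triangularity as an induction on $\left\Vert \gamma \right\Vert$ (handling the minimal $\gamma$ via the absence of zero divisors), whereas you phrase it as an invertible triangular change of basis along a linear extension of $\preceq$.
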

\begin{proof}
To ease notation let $X=V^{\lambda / \mu}$, $X^{\prime }=V^{\lambda'
/ \mu'}$, $Y=V^{\lambda+\lambda' / \mu+\mu'}$, and $\pi =\pi
_{\lambda ,\lambda ^{\prime }}$. \ For $\gamma \in \Lambda_{n}$ set
$Y[\gamma ]=p_{\gamma }^{\lambda +\lambda ^{\prime }}(Y)$, $X[\gamma
]=p_{\gamma }^{\lambda }(X)$, and $X^{\prime }[\gamma ]=p_{\gamma
}^{\lambda ^{\prime }}(X^{\prime })$.

Note that $Y=\bigoplus_{\gamma \in \Lambda_{n}}Y[\gamma ]$, and $\dim Y[\gamma ]$ is zero or one. \
Moreover, $Y[\gamma ]\not=\{0\}$ if, and only if, $\mu +\mu ^{\prime
}<\gamma <\lambda +\lambda ^{\prime }$. \ We will prove by induction on $%
\left\Vert \gamma \right\Vert $ that $Y[\gamma ]$ is in the image of $\pi $.

Let $\gamma \in \Lambda_{n}$ be of minimal norm such that $Y[\gamma
]\not=\{0\}$. \ Our base case is to show that $Y[\gamma ]$ is in the image
of $\pi $. \ Since $(\mu ,\lambda ),(\mu ^{\prime },\lambda ^{\prime })\in
\Omega(\sigma)$ we can apply Lemma \ref{peelinglemma} to obtain $\nu
$,$\nu ^{\prime }\in \Lambda_{n}$ such that $\gamma =\nu +\nu ^{\prime
} $, $\mu <\nu <\lambda $, and $\mu ^{\prime }<\nu ^{\prime }<\lambda
^{\prime }$. \ Choose $0\not=x\in X[\nu ]$ and $0\not=x^{\prime }\in
X^{\prime }[\nu ^{\prime }]$, and let $z=x\otimes x^{\prime }$.

Now, $\pi (z)=\sum_{\tau \in \Lambda_{n}}p_{\tau }^{\lambda +\lambda
^{\prime }}(\pi (z))$ is a decomposition of $\pi (z)$ in $%
Y=\bigoplus_{\gamma \in \Lambda_{n}}Y[\gamma ]$.\ \ Since $p_{\tau }^{\lambda +\lambda ^{\prime }}(\pi (z))=0$ for $%
\tau \succ \gamma $, $\pi (z)=\sum_{\tau \preceq \gamma }p_{\tau
}^{\lambda +\lambda ^{\prime }}(\pi (z))$. \ Now $\tau \prec \gamma $
implies $\left\Vert \tau \right\Vert <\left\Vert \gamma \right\Vert $, and
by hypothesis $\gamma $ is of minimal norm such that $Y[\gamma ]\not=\{0\}$.  Therefore
$p_{\tau }^{\lambda +\lambda ^{\prime }}(\pi (z))=0$ for $\tau \prec \gamma $, and hence $\pi (z)=p_{\gamma }^{\lambda +\lambda ^{\prime }}(\pi (z))\in
Y[\gamma ]$. \ By definition, $\pi (z)$ is the product of $x$ and $x^{\prime
}$ in $\mathcal{R}_{GL_{n+1}}$. \ Therefore, since $\mathcal{R}_{GL_{n+1}}$\ has no zero divisors, $\pi (z)\not=0$. \ Since $\dim Y[\gamma ]=1$, we conclude that
$Y[\gamma ]$ is in the image of $\pi $. \ This completes the base case.

Now fix $\gamma \in \Lambda_{n}$ such that $Y[\gamma ]\not=\{0\}$, and
suppose $Y[\tau ]$ is in the image of $\pi $ for all $\tau $ such that $%
\left\Vert \tau \right\Vert <\left\Vert \gamma \right\Vert $. \ Using Lemma %
\ref{peelinglemma} again, we choose $\nu $,$\nu ^{\prime }\in \Lambda_{n-1}$ such that $\gamma =\nu +\nu ^{\prime }$, $\mu <\nu <\lambda $,
and $\mu ^{\prime }<\nu ^{\prime }<\lambda ^{\prime }$. \ Also choose $%
0\not=y\in Y[\gamma ]$, $0\not=x\in X[\nu ]$, and $0\not=x^{\prime }\in
X^{\prime }[\nu ^{\prime }]$. \ By Lemma \ref{projection lemma}, $p_{\gamma
}^{\lambda +\lambda ^{\prime }}(\pi (x\otimes x^{\prime }))\not=0$. \
Therefore
\begin{equation*}
\pi (x\otimes x^{\prime })\in \mathbb{C}^{\times }y+\dsum\limits_{\tau \prec
\gamma }Y[\tau ]\text{.}
\end{equation*}%
Since $\tau \prec \gamma $ implies $\left\Vert \tau \right\Vert <\left\Vert
\gamma \right\Vert $, by the inductive hypothesis we obtain an element $\xi
\in X\otimes X^{\prime }$ such that $\pi (\xi )=y$. \ Since $\dim Y[\gamma
]=1$, this shows that $Y[\gamma ]$ is in the image of $\pi $. \ This
completes the induction.
\end{proof}

\section{Proof of Theorem \ref{ExtThm2}}
In this section it will be convenient for us to introduce the
following convention.  Elements of the branching semigroup
$\Lambda_{\B}$ will be thought of as ``skew shapes'', and so instead
of writing $(\mu,\lambda) \in \Lambda_{\B}$, we will write $\lambda
/ \mu \in \Lambda_{\B}$.  In this way, for $p=\lambda / \mu \in
\Lambda_{\B}$ we associate the spaces $W^{p}$, $A^{p}$, etc...

\subsection{\label{SectionAsigma}A filtration on the branching semigroup}
Let $h:\Lambda_{\B}\rightarrow \Lambda_{2n}$ be given by $h(\lambda
/ \mu )=f(\mu ,\lambda ^{+})$, where $f(\mu ,\lambda ^{+})$ is the
rearrangement function defined in Section \ref{section4.0}. \ The
image of $h$ is thus all sequences in $\Lambda_{2n}$ ending in zero.
\ As before, we define
the functions $r_{i}:\Lambda_{\B}\rightarrow \mathbb{Z}$ by $%
r_{i}(p)=x_{i}-y_{i}$, where $h(p)=(x_{1},y_{1},...,x_{n},y_{n})$.  Moreover, for $\sigma \in \Sigma$, let $h_{\sigma}$ denote the restriction of $h$ to $\Lambda_{\B}(\sigma)$.

The same argument as in Lemma \ref{semigroupisom} shows:
\begin{lemma}
\label{Lemma5.1}
Let $\sigma \in \Sigma$.  Then $h_{\sigma }:\Lambda_{\B}(\sigma) \rightarrow \Lambda_{2n}$ is
a semigroup embedding, with image the sequences in $\Lambda_{2n}$
ending in zero.  In particular, $h_{\sigma }^{-1}$ is defined on the set of
such sequences.
\end{lemma}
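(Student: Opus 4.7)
The plan is to reduce directly to Lemma \ref{semigroupisom}. Define an injection $\iota: \Lambda_{\B}(\sigma) \to \Omega(\sigma)$ by $\iota(\lambda/\mu) = (\mu,\lambda^+)$. This is a semigroup homomorphism because $(\lambda+\lambda')^+ = \lambda^+ + \lambda'^+$ (appending a zero is linear), and because the order type of $(\mu,\lambda^+)$ is the same as that of $(\mu,\lambda)$ when $\lambda^+$ is formed from $\lambda$ by adjoining a final zero. The image of $\iota$ is precisely $\{(\mu,\lambda') \in \Omega(\sigma) : \lambda'_{n+1}=0\}$. By construction $h_\sigma = f_\sigma \circ \iota$, and by Lemma \ref{semigroupisom} $f_\sigma$ is a semigroup isomorphism from $\Omega(\sigma)$ onto $\Lambda_{2n}$, so $h_\sigma$ is automatically an injective semigroup homomorphism.

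To identify the image, I would use the functions $a,b$ introduced in the proof of Lemma \ref{semigroupisom}. Under those functions $b(n+1)=2n$ in all cases (independent of $\sigma$), so for any $(\mu,\lambda') \in \Omega(\sigma)$ the last coordinate of $f_\sigma(\mu,\lambda')$ equals $\lambda'_{n+1}$. Hence the condition $\lambda'_{n+1}=0$ defining the image of $\iota$ is exactly the condition that the last entry of $f_\sigma(\mu,\lambda')$ is zero. This gives the desired description: $h_\sigma(\Lambda_{\B}(\sigma))$ equals the set of sequences in $\Lambda_{2n}$ ending in zero.

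Conversely, for surjectivity onto such sequences, I would take any $(z_1,\dots,z_{2n-1},0) \in \Lambda_{2n}$ and invoke the surjectivity half of Lemma \ref{semigroupisom}: there is a unique $(\mu,\lambda') \in \Omega(\sigma)$ with $f_\sigma(\mu,\lambda') = (z_1,\dots,z_{2n-1},0)$. Because $b(n+1)=2n$, we get $\lambda'_{n+1}=0$, so $\lambda' = \lambda^+$ for a unique $\lambda \in \Lambda_n$, and $(\mu,\lambda) \in \Lambda_{\B}$ with $\lambda/\mu$ of order type $\sigma$. Thus $h_\sigma(\lambda/\mu) = (z_1,\dots,z_{2n-1},0)$.

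The only mild subtlety — and in the author's phrasing, the reason they only say ``the same argument'' — is checking that the order type is unaffected by passing between $\lambda$ and $\lambda^+$: the interlacing inequalities $\sigma_i$ that define the order type involve only indices $i=1,\dots,n-1$ relating $\mu_i$ and $\lambda_{i+1}$, none of which touch the appended zero in position $n+1$. With that observation in hand, the reduction to Lemma \ref{semigroupisom} is routine, and the last-coordinate-zero characterization of the image is immediate from the explicit formula $b(n+1)=2n$.
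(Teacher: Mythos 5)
Your proposal is correct and matches the paper's intent: the paper proves this lemma simply by invoking ``the same argument as in Lemma \ref{semigroupisom}'', and your reduction via $\iota(\lambda/\mu)=(\mu,\lambda^{+})$, together with the observation that $b(n+1)=2n$ forces the last coordinate of $f_{\sigma}(\mu,\lambda')$ to be $\lambda'_{n+1}$, is just an explicit and clean way of carrying out that same argument. The one point needing care --- that appending the zero does not change the order type, since the inequalities only involve $\mu_{i}$ and $\lambda_{i+1}$ for $i\leq n-1$ --- is exactly the point you checked, so nothing is missing.
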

In this section we will only deal with sequences ending in zero, so
$h_{\sigma }^{-1}$ will always be well-defined.  By the above lemma
we endow the $L$-module $$\mathcal{A}_{\sigma
}=\dbigoplus\limits_{p\in \Lambda_{\B}(\sigma)}A^{p}$$with a product
given by Cartan product of irreducible $L$-modules: $A^{p} \otimes
A^{p'} \rightarrow A^{p+p}$.
We now show that $\mathcal{A}_{\sigma}$ is a very naturally occurring $L$-algebra.  Consider the $L$-module $V=U \times W$, where $U=\mathbb{C}^{2} \times \cdots \times \mathbb{C}^{2}$ ($n$ copies) and $W=\mathbb{C} \times \cdots \times \mathbb{C}$ ($n-1$ copies).  Here, $L$ acts on $U$ diagonally, on $W$ trivially, and on the ring of functions $\calli{O}(V)$ by right translation.  Let $t_{1},...,t_{n-1}$ be the standard coordinate functions on $\mathbb{C}%
^{n-1}$. \ Decompose $\mathcal{O}(V)$ into graded components:%
\begin{equation}
\mathcal{O}(V)\cong \dbigoplus\limits_{\substack{ r_{j}\geq 0  \\ j=1,...,n}}%
\dbigoplus\limits_{\substack{ s_{k}\geq 0  \\
k=1,...,n-1}}F^{r_{1}}\otimes \cdots \otimes F^{r_{n}}\otimes
t_{1}^{s_{1}}\cdots t_{n-1}^{s_{n-1}}\text{.} \label{IsomO(V)}
\end{equation}%
This is also a decomposition of $\mathcal{O}(V)$ into irreducible $L$%
-modules.

For $\sigma \in \Sigma$, we can consider $\calli{O}(V)$ as an $\Lambda_{\B}(\sigma)$-graded algebra as follows.
Set $s_{i}:\Lambda_{\B}(\sigma) \rightarrow \mathbb{Z}$ by $s_{i}(p)=y_{i}-x_{i+1}$, where, as usual, $h(p)=(x_{1},y_{1},...,x_{n},y_{n})$.  Then define the $p$-component of $\mathcal{O}(V)$ by:
$$
\mathcal{O}(V)^{p} = F^{r_{1}(p)} \otimes \cdots \otimes F^{r_{n}(p)} \otimes t_{1}^{s_{1}(p)} \cdots t_{n-1}^{s_{n-1}(p)}.
$$
Clearly, we have
$$
\mathcal{O}(V) = \bigoplus_{p \in \Lambda_{\B}(\sigma)} \mathcal{O}(V)^{p}.
$$
One easily proves the following lemma.
\begin{lemma}
\label{OVlemma}
Let $\sigma \in \Sigma$ and regard $\mathcal{O}(V)$ as an $\Lambda_{\B}(\sigma)$-graded $L$-algebra.  Then $\mathcal{A}_{\sigma}$ and $\mathcal{O}(V)$ are isomorphic as $\Lambda_{\B}(\sigma)$-graded $L$-algebras, and the isomorphism is unique up to scalars.
\end{lemma}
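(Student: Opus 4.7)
The plan is to construct an explicit $L$-equivariant, $\Lambda_{\B}(\sigma)$-graded algebra isomorphism $\phi:\mathcal{A}_{\sigma}\to\mathcal{O}(V)$ and then deduce uniqueness from freeness of the grading semigroup.

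First, I would upgrade Lemma \ref{Lemma5.1} to identify $\Lambda_{\B}(\sigma)$ with the free commutative semigroup $\mathbb{N}^{2n-1}$. Writing $h(p)=(x_{1},y_{1},\ldots,x_{n},y_{n})$, and using that $y_{n}=0$ because $\lambda^{+}$ ends in zero, the nonnegative numbers $r_{1}(p),s_{1}(p),r_{2}(p),\ldots,s_{n-1}(p),r_{n}(p)$ together with $y_{n}=0$ recover the entire weakly decreasing sequence by successive partial sums. Thus $p\mapsto(r_{1}(p),s_{1}(p),\ldots,s_{n-1}(p),r_{n}(p))$ is a semigroup isomorphism $\Lambda_{\B}(\sigma)\xrightarrow{\sim}\mathbb{N}^{2n-1}$, and the right-hand side is free on $2n-1$ generators corresponding to the standard basis vectors.

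Second, I would construct $\phi$ piece-wise. Decompose
\[
\mathcal{O}(V)\cong\bigotimes_{i=1}^{n}\mathcal{O}(\mathbb{C}^{2})_{(i)}\otimes\mathbb{C}[t_{1},\ldots,t_{n-1}],
\]
where the $i$-th factor of $L$ acts on the $i$-th tensorand $\mathcal{O}(\mathbb{C}^{2})=\bigoplus_{r\geq 0}F^{r}$ and trivially elsewhere. In degree $p$ the space $A^{p}=\bigotimes_{i=1}^{n}F^{r_{i}(p)}$ is precisely $\mathcal{O}(V)^{p}$ with the one-dimensional, $L$-trivial factor $\mathbb{C}\,t_{1}^{s_{1}(p)}\cdots t_{n-1}^{s_{n-1}(p)}$ stripped off, so I define $\phi$ on the $p$-component to be the $L$-equivariant linear isomorphism $v\mapsto v\otimes t_{1}^{s_{1}(p)}\cdots t_{n-1}^{s_{n-1}(p)}$. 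Multiplicativity is immediate: within the $F^{r_{i}}$ factors both products are the Cartan product of $SL_{2}$-modules, which is ordinary polynomial multiplication inside $\mathcal{O}(\mathbb{C}^{2})$; on the monomial side $s_{j}$ is additive, so $t^{s(p)}\cdot t^{s(p')}=t^{s(p+p')}$.

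Third, for uniqueness each $A^{p}$ is an irreducible $L$-module (a tensor product of irreducible $SL_{2}$-modules), so by Schur's lemma any $L$-equivariant graded isomorphism $\psi$ satisfies $\psi|_{A^{p}}=c_{p}\cdot\phi|_{A^{p}}$ for some $c_{p}\in\mathbb{C}^{\times}$. The algebra condition forces $c_{p+p'}=c_{p}c_{p'}$, so $c$ is a semigroup homomorphism $\Lambda_{\B}(\sigma)\to\mathbb{C}^{\times}$. Since $\Lambda_{\B}(\sigma)\cong\mathbb{N}^{2n-1}$ is free, such a $c$ is determined by arbitrary independent choices on the $2n-1$ generators; this $(\mathbb{C}^{\times})^{2n-1}$ of graded $L$-algebra automorphisms is the ``up to scalars'' freedom. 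The only real bookkeeping hurdle is verifying that the grading semigroup is free on the $2n-1$ coordinate generators; once that is in hand, the rest is a direct unwinding of definitions.
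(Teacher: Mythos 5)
Your proposal is correct; the paper gives no written proof of this lemma (it is dismissed as easy), and your argument is precisely the intended unwinding of its definitions: the identification of $\Lambda_{\B}(\sigma)$ with $\mathbb{N}^{2n-1}$ via $p\mapsto(r_{1}(p),s_{1}(p),\ldots,s_{n-1}(p),r_{n}(p))$ (using Lemma \ref{Lemma5.1} and $y_{n}=0$), the graded $L$-equivariant map $v\mapsto v\otimes t_{1}^{s_{1}(p)}\cdots t_{n-1}^{s_{n-1}(p)}$ matching $A^{p}$ with $\mathcal{O}(V)^{p}$, multiplicativity from additivity of the $s_{i}$ and the fact that the Cartan product of the $F^{r_{i}}$ is polynomial multiplication, and uniqueness up to scalars from Schur's lemma together with freeness of the grading semigroup.
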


The main step in proving Theorem \ref{ExtThm2} is showing that $\mathcal{A%
}_{\sigma }$ and $\B_{\sigma}$ are
isomorphic as $SL_{2}$-algebras, and the isomorphism is unique up to scalars.  We will prove this by induction on a certain filtration of $%
\Lambda_{\B}(\sigma)$, which we now describe.

For $p\in \Lambda_{\B}$ let $p_{\max}=\lambda_{1}$ where $p=\lambda / \mu $ and $\lambda =(\lambda _{1},...,\lambda _{n})$. \ For every $\sigma \in \Sigma $ we define the set
\begin{equation*}
\Lambda_{\B}(\sigma,m)=\{p\in \Lambda_{\B}(\sigma):p_{\max }\leq m\}\text{.}
\end{equation*}
Clearly $\Lambda_{\B}(\sigma,m)$ is finite, $\Lambda_{\B}(\sigma,m-1)\subset\Lambda_{\B}(\sigma,m)$, and $\bigcup_{m\geq0}\Lambda_{\B}(\sigma,m)=\Lambda_{\B}(\sigma).$

\begin{lemma}
\label{peel}Let $m>1$, $\sigma \in \Sigma $, and suppose $p\in
\Lambda_{\B}(\sigma,m)$ satisfies $p_{\max }=m$.
\begin{enumerate}
\item There exist $p^{\prime },p^{\prime \prime }\in \Lambda_{\B}(\sigma,m-1)$ such
that $p=p^{\prime }+p^{\prime \prime }$.
\item Suppose moreover that $\tau \in \Sigma$ and $p \in \Lambda_{\B}(\tau,m)$.  Then there exist $p^{\prime },p^{\prime \prime
}\in \Lambda_{\B}(\sigma,m-1) \cap \Lambda_{\B}(\tau,m-1)$ such that $p=p^{\prime }+p^{\prime \prime }$.
\end{enumerate}
\end{lemma}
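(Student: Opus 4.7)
The plan is to transfer the problem, via the semigroup isomorphism $h_\sigma$ of Lemma~\ref{Lemma5.1}, to an elementary decomposition statement for weakly decreasing non-negative integer sequences. Writing $z := h(p) = (x_1, y_1, \ldots, x_n, y_n)$, one has $b(1) = 1$ for every order type (from the construction in Lemma~\ref{semigroupisom}), so $z_1 = \lambda_1 = p_{\max} = m$. I would then take
\[
z'_j := \min(z_j, 1), \qquad z''_j := z_j - z'_j \qquad (j=1,\ldots,2n),
\]
and define $p' := h_\sigma^{-1}(z')$ and $p'' := h_\sigma^{-1}(z'')$.

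The verification consists of three routine observations, each reducing to a case split on whether $z_j$ and $z_{j+1}$ are positive or zero:
(i) $z'$ and $z''$ are weakly decreasing sequences in $\Lambda_{2n}$ ending in $0$, so $p'$ and $p''$ are well defined in $\Lambda_\B(\sigma)$ by Lemma~\ref{Lemma5.1};
(ii) $z'_1 = 1$ and $z''_1 = m-1$, both $\leq m-1$ since $m \geq 2$;
(iii) whenever $z_j = z_{j+1}$, the corresponding equalities $z'_j = z'_{j+1}$ and $z''_j = z''_{j+1}$ also hold.
Since $h_\sigma$ is a semigroup homomorphism, $p = p' + p''$; by (ii), $p', p'' \in \Lambda_\B(\sigma, m-1)$, which proves part (1).

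For part (2), the hypothesis $p \in \Lambda_\B(\sigma) \cap \Lambda_\B(\tau)$ forces $\mu_i = \lambda_{i+1}$ at every index $i$ where $\sigma_i \neq \tau_i$ (both inequalities $\mu_i \geq \lambda_{i+1}$ and $\mu_i \leq \lambda_{i+1}$ must hold), which in terms of $z$ reads $z_{2i} = z_{2i+1}$. By (iii), the same equalities hold for $z'$ and for $z''$. The reconstruction formulas for $h_\sigma^{-1}$ and $h_\tau^{-1}$ in Lemma~\ref{semigroupisom} differ only at the positions where $\sigma_i \neq \tau_i$, and there they simply swap the two values at indices $2i$ and $2i+1$; since those values agree for $z'$ and for $z''$, we get $h_\sigma^{-1}(z') = h_\tau^{-1}(z')$ and likewise for $z''$. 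Hence $p', p'' \in \Lambda_\B(\sigma, m-1) \cap \Lambda_\B(\tau, m-1)$, proving part (2).

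I expect no serious obstacle: the substance lies entirely in the ``min with one'' decomposition and its straightforward compatibility with monotonicity and consecutive equalities; everything else is bookkeeping through $h_\sigma$ and $h_\tau$.
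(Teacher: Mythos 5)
Your proposal is correct and is essentially the paper's own argument: the same decomposition $z'_j=\min(z_j,1)$, $z''_j=z_j-z'_j$, transferred through $h_\sigma$ via Lemma~\ref{Lemma5.1}, with the same observation that consecutive equalities in $z$ persist in $z'$ and $z''$ to handle the second order type. Your index bookkeeping ($\mu_i=\lambda_{i+1}$ corresponding to $z_{2i}=z_{2i+1}$) is in fact the correct one, whereas the paper's proof writes $z_{2i+1}=z_{2i+2}$, an apparent off-by-one that does not affect the argument.
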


\begin{proof}
Let $h_{\sigma }(p)=(z_{1},...,z_{2n})$. \ Define
\begin{equation*}
z_{i}^{\prime }=
\left\{
\begin{array}{rl}
1\text{ if }z_{i}\geq 1  \\
0\text{ if }z_{i}=0
\end{array} \right.
\end{equation*}
and $z_{i}^{\prime \prime }=z_{i}-z_{i}^{\prime }$. \ It's trivial to check
that $\xi ^{\prime }=(z_{1}^{\prime },...,z_{2n}^{\prime }),\xi ^{\prime
\prime }=(z_{1}^{\prime \prime },...,z_{2n}^{\prime \prime })\in \Lambda_{2n}$. \ Let $p^{\prime }=f_{\sigma }^{-1}(\xi ^{\prime })$ and $%
p^{\prime \prime }=f_{\sigma }^{-1}(\xi ^{\prime \prime })$. \ This is
well-defined by Lemma \ref{Lemma5.1}. \ Lemma \ref{Lemma5.1} also
shows that $p=p^{\prime }+p^{\prime \prime }$. \ Since $m>1$%
, $p^{\prime },p^{\prime \prime }\in \Lambda_{\B}(\sigma,m-1)$.  This proves (1).

Let $p^{\prime },p^{\prime \prime }\in \Lambda_{\B}(\sigma,m-1)$
be constructed as in the previous paragraph. \ We must show that $p^{\prime
},p^{\prime \prime }\in \Lambda_{\B}(\sigma)\cap
\Lambda_{\B}(\tau)$. \ If $\sigma =(\sigma _{1}\cdots\sigma _{n-1})$ and $\tau =(\tau
_{1}\cdots\tau _{n-1})$, then every $i$ such
that $\sigma _{i}\not=\tau _{i}$ forces the equality $\mu _{i}=\lambda
_{i+1} $ among the entries of $p$. \ Therefore, if $h_{\sigma
}(p)=(z_{1},...,z_{2n})$ and $\sigma _{i}\not=\tau _{i}$, then $%
z_{2i+1}=z_{2i+2}$. \ Now note that in the definition of $\xi ^{\prime },\xi
^{\prime \prime }$, if $z_{2i+1}=z_{2i+2}$ then $z_{2i+1}^{\prime
}=z_{2i+2}^{\prime }$ and $z_{2i+1}^{\prime \prime }=z_{2i+2}^{\prime \prime
}$. \ Hence the entries of $\xi ^{\prime },\xi ^{\prime \prime }$ satisfy
the same equalities that $h_{\sigma }(p)$ satisfies, which implies that $%
p^{\prime },p^{\prime \prime }\in \Lambda_{\B}(\sigma)\cap
\Lambda_{\B}(\tau)$.  This proves (2).
\end{proof}

\begin{lemma}
\label{megapeel}Let $m>1$, $\sigma \in \Sigma $, and suppose $p\in
\Lambda_{\B}(\sigma,m)$ satisfies $p_{\max }=m$. \ Then there exist $%
q_{1},...,q_{n}\in \Lambda_{\B}(\sigma,m-1)$ such that
\begin{enumerate}
\item $p=q_{1}+\cdots +q_{n}$
\item $A^{q_{i}}$ is an irreducible $SL_{2}$-module.
\item Either $A^{q_{1}}\otimes \cdots \otimes A^{q_{n}}\cong A^{p}$ as $SL_{2}$-modules, or $A^{p}$ is irreducible as an $SL_{2}$%
-module, and the multiplication map $A^{q_{1}}\otimes \cdots \otimes
A^{q_{n}}\rightarrow A^{p}$ is a projection onto the Cartan component $A^{p}$
of $A^{q_{1}}\otimes \cdots \otimes A^{q_{n}}$.
\end{enumerate}
\end{lemma}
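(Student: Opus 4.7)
The plan is to construct the $q_{i}$ explicitly through the semigroup isomorphism $h_{\sigma}:\Lambda_{\B}(\sigma)\to\Lambda_{2n}^{0}$ of Lemma \ref{Lemma5.1}, where $\Lambda_{2n}^{0}$ denotes sequences in $\Lambda_{2n}$ ending in zero. Writing $h_{\sigma}(p)=(x_{1},y_{1},\ldots,x_{n},y_{n})$ with $x_{1}=m$ and $y_{n}=0$, I set $r_{i}=x_{i}-y_{i}$ and $s_{i}=y_{i}-x_{i+1}$ (convention $x_{n+1}=0$), yielding the step decomposition
\[
h_{\sigma}(p)=\sum_{i=1}^{n}r_{i}\,e_{2i-1}+\sum_{i=1}^{n-1}s_{i}\,e_{2i},
\]
where $e_{k}=(1,\ldots,1,0,\ldots,0)\in\Lambda_{2n}^{0}$ has $k$ leading ones. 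Since $A^{p}=F^{r_{1}}\otimes\cdots\otimes F^{r_{n}}$ under diagonal $SL_{2}$-action, $A^{p}$ is $SL_{2}$-irreducible iff at most one $r_{j}$ is nonzero. The idea is to let each $q_{i}$ absorb only $r_{i}e_{2i-1}+s_{i}e_{2i}$, so that $A^{q_{i}}$ is a single irreducible $F^{r_{i}}$.

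In the main case, when $A^{p}$ is $SL_{2}$-reducible, I would set $q_{i}:=h_{\sigma}^{-1}(r_{i}e_{2i-1}+s_{i}e_{2i})$ for $i<n$ and $q_{n}:=h_{\sigma}^{-1}(r_{n}e_{2n-1})$. The identity $\sum q_{i}=p$ is immediate from $h_{\sigma}$ being a semigroup isomorphism, and a direct computation gives $r_{j}(q_{i})=\delta_{ij}r_{i}$, whence $A^{q_{i}}\cong F^{r_{i}}$ is $SL_{2}$-irreducible and $A^{q_{1}}\otimes\cdots\otimes A^{q_{n}}\cong A^{p}$ as $SL_{2}$-modules (both sides are $F^{r_{1}}\otimes\cdots\otimes F^{r_{n}}$ with diagonal action). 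This establishes the first alternative of conclusion (3). The main technical content is verifying $p_{\max}(q_{i})\leq m-1$: since $p_{\max}(q_{i})=r_{i}+s_{i}=x_{i}-x_{i+1}$, the failure $x_{i}-x_{i+1}=m$ would force $h_{\sigma}(p)$ to be flat-then-zero (i.e.\ $x_{k}=y_{k}=m$ for $k<i$ and $x_{k}=y_{k}=0$ for $k>i$), collapsing all $r_{k}$'s except $r_{i}$ to zero and making $A^{p}$ irreducible, contradicting the Case A hypothesis. This is the step I expect to be the main obstacle.

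When $A^{p}$ is irreducible, the same construction still works unless some $q_{j}$ would have $p_{\max}=m$. Tracing the constraints, this forces $h_{\sigma}(p)$ into the special form $ke_{2j-1}+(m-k)e_{2j}$ (with $k$ the unique nonzero $r_{j}$) or $me_{2i}$ (all $r_{j}=0$). I would handle this by splitting the oversized piece, for instance replacing it with $h_{\sigma}^{-1}(e_{2j-1})$ and $h_{\sigma}^{-1}((k-1)e_{2j-1}+(m-k)e_{2j})$, and padding the remaining slots with the zero element (which lies in every $\Lambda_{\B}(\sigma,m-1)$) to retain exactly $n$ pieces. The relevant tensor factor $F^{1}\otimes F^{k-1}$ has the irreducible $A^{p}=F^{k}$ as its Cartan component, and the Cartan product in $\mathcal{A}_{\sigma}$ is precisely this projection, giving the second alternative of conclusion (3) (and degenerating to the first when $k\leq 1$). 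The all-$s$ degenerate shape $me_{2i}$ is handled analogously by splitting into $e_{2i}$ and $(m-1)e_{2i}$, with $A^{p}=F^{0}$ trivially already an isomorphism target.
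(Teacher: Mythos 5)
Your proposal is correct and takes essentially the same route as the paper: your pieces $h_{\sigma }^{-1}(r_{i}e_{2i-1}+s_{i}e_{2i})$ are exactly the paper's $\xi _{i}$, and your key step --- that an oversized jump $x_{i}-x_{i+1}=m$ forces $h_{\sigma }(p)=(m,...,m,b,0,...,0)$ and hence $SL_{2}$-irreducibility of $A^{p}$ --- is precisely the paper's case criterion. The only immaterial difference is the explicit split in the exceptional case, where the paper peels off the $0/1$-indicator vector as in Lemma \ref{peel} rather than $e_{2j-1}$; both yield the required Cartan-component projection.
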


\begin{proof}
Let $h_{\sigma }(p)=(x_{1},y_{1},...,x_{n},y_{n})$. \ Define%
\begin{equation*}
\xi _{i}=(\underset{2i-1}{\underbrace{x_{i}-x_{i+1},...,x_{i}-x_{i+1}}}%
,y_{i}-x_{i+1},0,...,0)
\end{equation*}%
for $i=1,...,n-1$, and set $\xi _{n}=(x_{n},...,x_{n},0)$.
The argument breaks into cases.

Case 1: Suppose $h_{\sigma }^{-1}(\xi _{i})\not\in \Lambda
_{\B}(\sigma,m-1)$ for some $i\leq n$. \ Then $x_{i}-x_{i+1}=m$ and therefore
\begin{equation*}
h_{\sigma }(p)=(m,...,m,b,0,...,0)
\end{equation*}%
for some $b\leq m$ in the $(2i)^{th}$ entry. \ Therefore $A^{p}$ is
irreducible as an $SL_{2}$-module.

Now choose $\xi ^{\prime },\xi ^{\prime \prime }$ as in the proof of Lemma %
\ref{peel} and consider the associated $p^{\prime },p^{\prime \prime
}$. \ By the lemma $p^{\prime },p^{\prime \prime }\in \Lambda
_{\B}(\sigma,m-1)$. \ Moreover, by our construction of $\xi ^{\prime
},\xi ^{\prime \prime }$ from $\xi $, $\calli{A}^{p^{\prime
}},\calli{A}^{p^{\prime \prime }}$ are irreducible $SL_{2}$-modules.
\ Therefore the map $A^{p^{\prime }}\otimes A^{p^{\prime \prime
}}\rightarrow A^{p}$ is a projection onto the Cartan component of
$A^{p^{\prime }}\otimes A^{p^{\prime \prime }}$, and the
lemma is satisfied with $q_{1}=p^{\prime },q_{2}=p^{\prime \prime }$, and $%
q_{i}=0$ for $i>2$.


Case 2: Suppose that $h_{\sigma }^{-1}(\xi _{i})\in \Lambda
_{\B}(\sigma ,m-1)$ for $i=1,...,n$. \ Then set $q_{i}=h_{\sigma }^{-1}(\xi
_{i}) $. \ Since $\xi =\xi _{1}+\cdots +\xi _{n}$, by Lemma \ref%
{semigroupisom},{\small \ }$p=q_{1}+\cdots +q_{n}$. \ By the definition of $%
\xi _{i}$ we also have that%
\begin{equation*}
A^{q_{i}}=F^{0}\otimes \cdots \otimes \underset{i^{th}}{\underbrace{%
F^{x_{i}-y_{i}}}}\otimes \cdots \otimes F^{0}\text{.}
\end{equation*}%
Therefore $A^{q_{i}}$ is an irreducible $SL_{2}$-module, and $%
A^{q_{1}}\otimes \cdots \otimes A^{q_{n}}\cong A^{p}$.
\end{proof}

\begin{remark}
In the proof of Lemma \ref{megapeel} all we used was the $SL_{2}$%
-module structure of $A^{p}$. \ Therefore, by Corollary \ref%
{CorollaryWY}, the statement holds with $A^{p}$ replaced by $%
W^{p}$ and $A^{q_{i}}$ replaced by $W^{q_{i}}$.
\end{remark}

\begin{lemma}
\label{kerlemma}Let $m>1$, $\sigma \in \Sigma $, and suppose $p\in
\Lambda_{\B}(\sigma,m)$ satisfies $p_{\max }=m$. \ Let $%
q_{1},...,q_{n}\in \Lambda_{\B}(\sigma,m-1)$ be given as in
Lemma \ref{megapeel}. \ Suppose also we are given $SL_{2}$%
-isomorphisms $\phi _{i}:W^{q_{i}}\rightarrow A^{q_{i}}$ for $%
i=1,...,n$. \ Let%
\begin{eqnarray*}
K &=&\ker (W^{q_{1}}\otimes \cdots \otimes W^{q_{n}}%
\overset{\tau }{\rightarrow }W^{p}) \\
J &=&\ker (A^{q_{1}}\otimes \cdots \otimes A^{q_{n}}\overset{\kappa }{%
\rightarrow }A^{p})
\end{eqnarray*}%
be the kernels of the multiplication maps coming from the rings $\calli{B}%
_{\sigma }$ and $\calli{A}_{\sigma }$, which we denote here by $\tau$ and $\kappa$. \ Set $\phi =\phi _{1}\otimes \cdots \otimes
\phi _{n}$. \ Then $\phi (K)=J$. \ Consequently, there is an $SL_{2}$-isomorphism $\psi :W^{p}\rightarrow A^{p}$ making the following
diagram commute:
\begin{equation*}
\begin{array}{ccc}
W^{q_{1}}\otimes \cdots \otimes W^{q_{n}} &
\longrightarrow & W^{p} \\
\downarrow {\small \phi } &  & \downarrow {\small \psi } \\
A^{q_{1}}\otimes \cdots \otimes A^{q_{n}} & \longrightarrow & A^{p}%
\end{array}
\label{diag4}
\end{equation*}%
\end{lemma}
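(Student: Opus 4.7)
The plan is to leverage the structural dichotomy from Lemma \ref{megapeel}(3), which by the subsequent remark applies equally to the multiplication map $\tau: W^{q_1}\otimes\cdots\otimes W^{q_n}\to W^{p}$, in order to match the kernels $K$ and $J$ isotypic-component by isotypic-component via the $SL_2$-isomorphism $\phi$.

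First I would note that since $q_1,\ldots,q_n$ all lie in $\Lambda_{\B}(\sigma)$, iterated application of Proposition \ref{SpnSurjectivityCorollary} shows that $\tau$ is surjective; the corresponding fact for $\kappa$ in $\mathcal{A}_\sigma$ is built into the construction of $\mathcal{A}_\sigma$ and Lemma \ref{megapeel}. I would then split into the two cases supplied by part (3) of Lemma \ref{megapeel}.

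In the easy case, $\kappa$ is itself an $SL_2$-isomorphism, so $J=\{0\}$. The remark following Lemma \ref{megapeel} yields the analogous statement for $W^p$, so $\tau$ is also an isomorphism and $K=\{0\}$. Hence $\phi(K)=J$ trivially, and the diagonal map $\psi$ is simply $\kappa\circ\phi\circ\tau^{-1}$. In the remaining case, both $A^p$ and $W^p$ are irreducible $SL_2$-modules, and $\kappa$ and $\tau$ are each the projection onto the (multiplicity-one) Cartan component of the respective tensor product. Since $\phi=\phi_1\otimes\cdots\otimes\phi_n$ is an $SL_2$-equivariant isomorphism of tensor products of irreducible $SL_2$-modules, it carries the isotypic decomposition of $W^{q_1}\otimes\cdots\otimes W^{q_n}$ onto that of $A^{q_1}\otimes\cdots\otimes A^{q_n}$; in particular the unique Cartan summand on each side corresponds under $\phi$, and so do the sums of all the remaining isotypic components, which are precisely $K$ and $J$. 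This gives $\phi(K)=J$.

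Once $\phi(K)=J$ is in hand, the universal property of quotients produces a unique $SL_2$-equivariant isomorphism $\psi:W^p\to A^p$ making the square commute, using the identifications $W^p\cong(W^{q_1}\otimes\cdots\otimes W^{q_n})/K$ and $A^p\cong(A^{q_1}\otimes\cdots\otimes A^{q_n})/J$ coming from the surjections $\tau$ and $\kappa$. The main obstacle, modest in size, is justifying that $\tau$ in the non-trivial case really is the projection onto the Cartan component rather than some other surjection onto an irreducible of the same top weight; but this is exactly the content of the remark after Lemma \ref{megapeel}, so no new argument is needed.
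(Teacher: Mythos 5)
Your proposal is correct and follows essentially the same route as the paper: establish surjectivity of $\tau$ via Proposition \ref{SpnSurjectivityCorollary}, split into the two cases of Lemma \ref{megapeel}(3) together with the remark transferring it to the $W$'s, handle the isomorphism case trivially, and in the irreducible case identify both kernels with the sum of the non-Cartan $SL_{2}$-isotypic components so that the equivariant isomorphism $\phi$ carries $K$ onto $J$, after which $\psi$ is induced on the quotients. The only cosmetic difference is that the paper concludes $\phi(K)=J$ by combining $\phi(K)\subset J$ with a dimension count, while you match isotypic components directly; these arguments are interchangeable.
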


\begin{proof}
Clearly $\kappa $ is surjective. \ By Proposition \ref{SpnSurjectivityCorollary},{\small \ }$\tau
$ is surjective. \ Therefore we have the following diagram:
\begin{equation*}
\begin{array}{ccccccccc}
0 & \longrightarrow & K & \longrightarrow & W^{q_{1}}\otimes
\cdots \otimes W^{q_{n}} & \longrightarrow & W^{p} &
\longrightarrow & 0 \\
&  &  &  & \downarrow {\small \phi } &  &  &  &  \\
0 & \longrightarrow & J & \longrightarrow & A^{q_{1}}\otimes \cdots \otimes
A^{q_{n}} & \longrightarrow & A^{p} & \longrightarrow & 0%
\end{array}%
\end{equation*}%
According to Lemma \ref{megapeel} there are two possibilities. \ Either $%
A^{q_{1}}\otimes \cdots \otimes A^{q_{n}}\cong A^{p}$ as $SL_{2}$%
-modules, or $A^{p}$ is irreducible as a $SL_{2}$-module, and the
multiplication map $A^{q_{1}}\otimes \cdots \otimes A^{q_{n}}\rightarrow
A^{p}$ is a projection onto the Cartan component $A^{p}$ of $%
A^{q_{1}}\otimes \cdots \otimes A^{q_{n}}$. \ If $A^{q_{1}}\otimes \cdots
\otimes A^{q_{n}}\cong A^{p}$ then $J=\{0\}$, and by the above remark $%
W^{q_{1}}\otimes \cdots \otimes W^{q_{n}}\cong W^{p}$. \ Therefore $K=\{0\}$, and so clearly $\phi (K)=J$.

In the other case, $A^{p}\cong W^{p}$ are irreducible
$SL_{2}$-modules. \ Choose $k$ so that $A^{p}\cong W^{p}\cong
F^{k}$. \ Since the maps $\kappa ,\tau $ are both projections onto
the Cartan component $F^{k}$, their kernels are given as sums of
$SL_{2}$-isotypic components
\begin{eqnarray*}
K &=&\dsum\limits_{n<k}(W^{q_{1}}\otimes \cdots \otimes W^{q_{n}})[j] \\
J &=&\dsum\limits_{j<k}(A^{q_{1}}\otimes \cdots \otimes A^{q_{n}})[j]\text{.}
\end{eqnarray*}%
Since $\phi $ intertwines the $SL_{2}$-action, $\phi (K)\subset J$%
. \ Moreover, $\kappa $ and $\tau $ are both Cartan multiplications of the
same $SL_{2}$-modules, and so $\dim K=\dim J$. \ Therefore $\phi(K)=J$.
\end{proof}

\begin{lemma}
\label{cubic}Let $m>1$, $\sigma \in \Sigma $, and suppose we are given $p^{\prime },p^{\prime \prime
},q^{\prime },q^{\prime \prime }\in \Lambda_{\B}(\sigma,m)$
such that $p^{\prime }+p^{\prime \prime }=q^{\prime }+q^{\prime \prime }$.
\ Then there exist $t^{\prime },t^{\prime \prime },r^{\prime },r^{\prime
\prime }\in \Lambda_{\B}(\sigma,m)$ such that
\begin{eqnarray*}
t^{\prime }+r^{\prime } =p^{\prime },  t^{\prime \prime }+r^{\prime \prime } =p^{\prime \prime } \\
t^{\prime }+t^{\prime \prime } =q^{\prime },  r^{\prime }+r^{\prime \prime } =q^{\prime \prime }
\end{eqnarray*}%
\end{lemma}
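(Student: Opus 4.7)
The plan is to transport the problem along the semigroup isomorphism $h_{\sigma}:\Lambda_{\B}(\sigma)\to\Lambda_{2n}$ of Lemma \ref{Lemma5.1} and then solve it componentwise in the lattice of weakly decreasing non-negative integer sequences. Writing $P'=h_{\sigma}(p')$, and analogously $P'', Q', Q''$, all four images are weakly decreasing, non-negative, end in zero, and have first entry at most $m$; the equation $p'+p''=q'+q''$ becomes $P'+P''=Q'+Q''$. Finding $t',t'',r',r''\in\Lambda_{\B}(\sigma,m)$ satisfying the four additive conditions then reduces to finding $T',T'',R',R''$ of the same type in $\Lambda_{2n}$ with $T'+T''=Q'$, $T'+R'=P'$, $R'+R''=Q''$, $T''+R''=P''$.

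Once $T'$ is chosen, the other three sequences are forced by $R'=P'-T'$, $T''=Q'-T'$, $R''=Q''-R'$, and $T''+R''=P''$ follows automatically from $P'+P''=Q'+Q''$. The non-negativity of $T',T'',R',R''$ is then equivalent to the componentwise condition $\max(0, P'_i - Q''_i)\leq T'_i\leq \min(P'_i, Q'_i)$, while the ``ending in zero'' and ``max at most $m$'' conditions for the outputs are automatic from the corresponding facts about $P',P'',Q',Q''$, together with $T'_i\leq P'_i$ and $T'_i\leq Q'_i$.

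The substantive point is to arrange that all four sequences be weakly decreasing. Setting $\Delta T'_i=T'_i-T'_{i+1}$ (and analogously for the other sequences), the four monotonicity requirements translate into $\Delta T'_i\geq 0$, $\Delta T'_i\leq \Delta P'_i$, $\Delta T'_i\leq \Delta Q'_i$, and $\Delta T'_i\geq \Delta P'_i - \Delta Q''_i$; equivalently, $\Delta T'_i\in\bigl[\max(0, \Delta P'_i - \Delta Q''_i),\;\min(\Delta P'_i, \Delta Q'_i)\bigr]$. The crucial step is non-emptiness of this interval: the only non-obvious bound is $\Delta P'_i - \Delta Q''_i\leq \Delta Q'_i$, which holds because $P'+P''=Q'+Q''$ forces $\Delta P'_i - \Delta Q'_i - \Delta Q''_i = -\Delta P''_i\leq 0$.

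I will then construct $T'$ by descending recursion: set $T'_{2n}=0$, and for $i$ from $2n-1$ down to $1$ pick any integer $\Delta T'_i$ in the interval above, putting $T'_i=T'_{i+1}+\Delta T'_i$. A short induction, using the upper and lower difference bounds separately, shows that $T'_i$ remains in $\bigl[\max(0, P'_i - Q''_i),\;\min(P'_i, Q'_i)\bigr]$, so all four forced sequences are non-negative; by construction each is weakly decreasing, and the initial value gives ``ending in zero.'' Since $T'_1\leq P'_1\leq m$, and the analogous bounds show each of the other first entries is at most $m$, applying $h_{\sigma}^{-1}$ returns the desired elements of $\Lambda_{\B}(\sigma,m)$. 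The main obstacle, and the one place where the hypothesis $p'+p''=q'+q''$ is genuinely used, is the non-emptiness of the $\Delta T'_i$ interval.
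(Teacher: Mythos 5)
Your proposal is correct and follows essentially the same route as the paper: both transport the problem through the semigroup isomorphism $h_{\sigma}$ and solve the additive system componentwise in difference (equivalently, fundamental-weight) coordinates, with the hypothesis $p^{\prime}+p^{\prime\prime}=q^{\prime}+q^{\prime\prime}$ supplying the one non-obvious non-negativity. The only difference is cosmetic: the paper writes down an explicit solution using componentwise minima of the fundamental-weight coefficients, whereas you argue non-emptiness of the feasible interval and pick any point in it.
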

\begin{proof}
For some nonnegative
integers $n_{i}^{\prime },n_{i}^{\prime \prime },m_{i}^{\prime
},m_{i}^{\prime \prime }$ we have%
\begin{eqnarray*}
h_{\sigma }(p^{\prime }) &=&\tsum\nolimits_{i=1}^{2n}n_{i}^{\prime }\varpi
_{i}\text{, }h_{\sigma }(p^{\prime \prime
})=\tsum\nolimits_{i=1}^{2n}n_{i}^{\prime \prime }\varpi _{i} \\
h_{\sigma }(q^{\prime }) &=&\tsum\nolimits_{i=1}^{2n}m_{i}^{\prime }\varpi
_{i}\text{, }h_{\sigma }(q^{\prime \prime
})=\tsum\nolimits_{i=1}^{2n}m_{i}^{\prime \prime }\varpi _{i}
\end{eqnarray*}

Define:%
\begin{eqnarray*}
\tau ^{\prime } &=&\sum_{i=1}^{2n}(m_{i}^{\prime \prime }-\min
(n_{i}^{\prime \prime },m_{i}^{\prime \prime }))\varpi _{i} \\
\rho ^{\prime } &=&\sum_{i=1}^{2n}(n_{i}^{\prime }-m_{i}^{\prime \prime
}+\min (n_{i}^{\prime \prime },m_{i}^{\prime \prime }))\varpi _{i} \\
\tau ^{\prime \prime } &=&\sum_{i=1}^{2n}\min (n_{i}^{\prime \prime
},m_{i}^{\prime \prime })\varpi _{i} \\
\rho ^{\prime \prime } &=&\sum_{i=1}^{2n}(n_{i}^{\prime \prime }-\min
(n_{i}^{\prime \prime },m_{i}^{\prime \prime }))\varpi _{i}\text{.}
\end{eqnarray*}%
Clearly $\tau ^{\prime },\tau ^{\prime \prime },\rho ^{\prime \prime }\in \Lambda_{2n}$. Since $p^{\prime }+p^{\prime
\prime }=q^{\prime }+q^{\prime \prime }$, $\rho ^{\prime } \in \Lambda_{2n}$ as well. \
Now, note that
\begin{eqnarray*}
h_{\sigma }(p^{\prime }) =\tau ^{\prime }+\rho ^{\prime },
h_{\sigma }(p^{\prime \prime }) =\tau ^{\prime \prime }+\rho ^{\prime
\prime } \\
h_{\sigma }(q^{\prime }) =\rho ^{\prime }+\rho ^{\prime \prime },
h_{\sigma }(q^{\prime \prime }) =\tau ^{\prime }+\tau ^{\prime \prime }%
\text{.}
\end{eqnarray*}
Set
\begin{eqnarray*}
t^{\prime } &=&h_{\sigma }^{-1}(\tau ^{\prime })\text{, \ }t^{\prime \prime
}=h_{\sigma }^{-1}(\tau ^{\prime \prime }) \\
r^{\prime } &=&h_{\sigma }^{-1}(\rho ^{\prime })\text{, \ }r^{\prime
\prime }=h_{\sigma }^{-1}(\rho ^{\prime \prime })
\end{eqnarray*}%
Since $h_{\sigma }$ is a semigroup isomorphism and $p^{\prime },p^{\prime
\prime },q^{\prime },q^{\prime \prime }\in \Lambda_{\B}(\sigma)$, it follows that $t^{\prime },t^{\prime \prime },r^{\prime
},r^{\prime \prime }\in \Lambda_{\B}(\sigma)$ and they
satisfy the desired equations.
\end{proof}

\subsection{\label{section5.3}Proof of Proposition \ref{CorIsomMtoA}}

\begin{definition}
\label{compatible}Let $\mathcal{F}=\{\phi _{p}:W^{p}\rightarrow
A^{p}\}_{p\in \Lambda_{\B}}$ be a family of $SL_{2}$%
-isomorphisms indexed by $\Lambda_{\B}$. \ Then $\mathcal{F}$ is a
\textbf{compatible family} if for any $%
\sigma \in \Sigma $ and $p^{\prime },p^{\prime \prime }\in
\Lambda_{\B}(\sigma)$ the following diagram commutes:
\begin{equation}
\begin{array}{ccc}
W^{p^{\prime }}\otimes W^{p^{\prime \prime }} &
\longrightarrow & W^{p^{\prime }+p^{\prime \prime }} \\
\downarrow &  & \downarrow \\
A^{p^{\prime }}\otimes A^{p^{\prime \prime }} & \longrightarrow &
A^{p^{\prime }+p^{\prime \prime }}%
\end{array}
\label{DI}
\end{equation}
Here the vertical maps are given by $\phi _{p^{\prime }}\otimes \phi _{p^{\prime
\prime }}$ and $\phi _{p^{\prime }+p^{\prime \prime }}$, and the
horizontal maps are the product maps in the rings $\B%
_{\sigma }$ and $\mathcal{A}_{\sigma }$. \
\end{definition}


\begin{proposition}
\label{BigProp}There exists a compatible family $\mathcal{F}=\{\phi _{p}:%
W^{p}\rightarrow A^{p}\}_{p\in \Lambda_{\B}}$ of $SL_{2}$-isomorphisms. \ Moreover, each map $\phi _{p}\in \mathcal{F}$
is unique up to scalar.
\end{proposition}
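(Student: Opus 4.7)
I proceed by induction on $m := p_{\max}$, simultaneously constructing the family $\{\phi_p\}_{p_{\max} \leq m}$ and proving that (i) each $\phi_p$ is an $SL_2$-isomorphism unique up to scalar, and (ii) the compatibility diagram~(\ref{DI}) commutes for every $\sigma \in \Sigma$ and every $p', p'' \in \Lambda_\B(\sigma)$ with $(p' + p'')_{\max} \leq m$. The base case $m = 0$ is immediate, with $\phi_0$ the identity on $\mathbb{C}$. For $m = 1$, every decomposition $p = p' + p''$ has $p'_{\max} + p''_{\max} = 1$, so one summand must vanish; compatibility is therefore vacuous at this level, and I choose for each such $p$ any $SL_2$-isomorphism $\phi_p : W^p \to A^p$ (scalar rigidity of these choices will be pinned down by the constraints imposed at higher levels).

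\textbf{Inductive construction.} For $p$ with $p_{\max} = m > 1$, I pick any $\sigma \in \Sigma$ containing $p$ and apply Lemma~\ref{megapeel} to decompose $p = q_1 + \cdots + q_n$ with each $q_i \in \Lambda_\B(\sigma, m-1)$. The isomorphisms $\phi_{q_i}$ are already defined by induction, and Lemma~\ref{kerlemma} produces an $SL_2$-isomorphism $\phi_p : W^p \to A^p$ fitting into the commutative square for $(q_1, \ldots, q_n)$. The nontrivial work that remains is to show that $\phi_p$ is, up to scalar, independent of the choice of $\sigma$ and of the decomposition, and that the enlarged family still satisfies condition (ii) in full.

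\textbf{Diamond argument.} Well-definedness and compatibility both reduce to the following claim: any two decompositions $p = u' + u'' = v' + v''$ with $u', u'', v', v'' \in \Lambda_\B(\sigma, m-1)$ yield the same $\phi_p$ up to scalar. I apply Lemma~\ref{cubic} to obtain $t_{ij} \in \Lambda_\B(\sigma, m-1)$ with $u' = t_{11}+t_{12}$, $u'' = t_{21}+t_{22}$, $v' = t_{11}+t_{21}$, $v'' = t_{12}+t_{22}$. Using the inductive compatibility at level $m-1$ together with the surjectivity of the Cartan products (Proposition~\ref{SpnSurjectivityCorollary}), both $\phi_{u'} \otimes \phi_{u''}$ and $\phi_{v'} \otimes \phi_{v''}$ factor through the refined tensor $\bigotimes_{i,j} \phi_{t_{ij}}$, forcing the two candidate $\phi_p$'s to coincide up to scalar (the residual scalar being exactly the freedom in Lemma~\ref{kerlemma}). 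To extend (ii) across two distinct order types $\sigma, \tau$ both containing a given pair $(p', p'')$, I invoke Lemma~\ref{peel}(2), which supplies a decomposition of $p'+p''$ lying in $\Lambda_\B(\sigma) \cap \Lambda_\B(\tau)$ at level $m-1$; the single-order-type diamond then glues the $\sigma$- and $\tau$-compatibilities together.

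\textbf{Main obstacle.} The principal difficulty is that the multiplicity spaces $W^p$ are typically reducible as $SL_2$-modules, so $Hom_{SL_2}(W^p, A^p)$ is high-dimensional and uniqueness up to scalar is invisible at the atomic level. Rigidity emerges only from the simultaneous multiplicative coordination over every sub-semigroup $\Lambda_\B(\sigma)$, and orchestrating this coordination without forcing ourselves into a single, restrictive order type is the heart of the matter. Lemmas~\ref{peel}(2), \ref{megapeel}, \ref{kerlemma}, and \ref{cubic} are precisely the combinatorial tools that allow descent to common lower-level sub-semigroups $\Lambda_\B(\sigma) \cap \Lambda_\B(\tau)$ in which the induction can safely operate.
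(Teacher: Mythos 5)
Your existence construction follows the paper's route essentially step for step: induct on $p_{\max}$, manufacture $\phi_p$ from Lemma \ref{megapeel} and Lemma \ref{kerlemma}, and use Lemma \ref{cubic} together with the inductive compatibility and the surjectivity of Proposition \ref{SpnSurjectivityCorollary} to show independence of the decomposition, with Lemma \ref{peel}(2) gluing across order types. One correction there: Lemma \ref{kerlemma} leaves no ``residual scalar freedom'' --- once the lower-level isomorphisms $\phi_{q_i}$ are fixed, the induced map is the \emph{unique} map making the square commute, precisely because the horizontal multiplication maps are surjective. Accordingly, your diamond argument yields (and must yield) exact equality of the two candidate maps $\phi_p$, not merely equality up to scalar; if they agreed only up to a nontrivial scalar you could not define $\phi_p$ so that all diagrams (\ref{DI}) commute on the nose, and the existence of a compatible family would fail, not survive.

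The genuine gap is in the ``unique up to scalar'' half of the proposition. You choose arbitrary $SL_2$-isomorphisms at level $p_{\max}=1$ and assert that their scalar rigidity ``will be pinned down by the constraints imposed at higher levels,'' and your final paragraph claims $\mathrm{Hom}_{SL_2}(W^p,A^p)$ is high-dimensional at the atomic level. But for $p_{\max}\le 1$ the modules $W^p\cong A^p$ are \emph{irreducible} $SL_2$-modules (each $r_i(p)\in\{0,1\}$ and at most one of them equals $1$, so $A^p\cong F^0$ or $F^1$), and it is Schur's lemma at this base level --- not any higher-level constraint --- that anchors the scalar; indeed, rescaling the base-level maps produces another compatible family, so higher levels impose no rigidity downward. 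Rigidity propagates upward instead: given a second compatible family $\widetilde{\mathcal F}$, Schur's lemma gives $\phi_p=c_p\widetilde{\phi}_p$ for $p_{\max}=1$, and then for $p=p'+p''$ with $p',p''\in\Lambda_{\B}(\sigma,m-1)$ (Lemma \ref{peel}) the compatibility of both families together with the surjectivity of $W^{p'}\otimes W^{p''}\rightarrow W^{p}$ forces $\phi_p=c_{p'}c_{p''}\widetilde{\phi}_p$. As written, your proposal never supplies this argument, so the uniqueness assertion of the proposition is left unproved.
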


\begin{proof}
For a nonnegative integer $m$ set $\Lambda_{\B}(m)=\{p\in
\Lambda_{\B}:p_{\max }\leq m\}$. \ We first prove by induction on
$m$ that there is a family of $SL_{2}$ isomorphisms
\begin{equation*}
\mathcal{F}_{m}=\{\phi _{p}:W^{p}\rightarrow A^{p}\}_{p\in
\Lambda_{\B}(m)}
\end{equation*}%
such that for any $p\in \Lambda_{\B}(m)$, $\sigma \in \Sigma $,
and $p^{\prime },p^{\prime \prime }\in \Lambda_{\B}(\sigma )$
such that $p=p^{\prime }+p^{\prime \prime }$, diagram (\ref{DI}) commutes.

For the base case we construct $\mathcal{F}_{1}$. \ If $p_{\max }=0$ then $%
p=p_{0}=0/0$. \ We define $\phi _{p_{0}}:W^{p_{0}}\rightarrow
A^{p_{0}}$ by $1\in W^{p_{0}}\mapsto 1\otimes \cdots \otimes
1\in A^{p_{0}}$. \ Of course, if $p^{\prime }+p^{\prime \prime }=p_{0}$ then
$p^{\prime }=p^{\prime \prime }=p_{0}$ and (\ref{DI}) trivially commutes. \
Suppose now that $p_{\max }=1$. \ Then $p=\lambda / \mu $ with $\lambda $ a
fundamental weight, and $\mu $ either zero or a fundamental weight. \ In any
case, $A^{p}$ is an irreducible $SL_{2}$-module, and by Corollary %
\ref{CorollaryWY}, $W^{p}\cong A^{p}$ as $SL_{2}$-modules. \ We
choose an $SL_{2}$-isomorphism $\phi _{p}:W^{p}\rightarrow A^{p}$. \
By Schur's Lemma, $\phi _{p}$ is unique up to scalar. \ Now suppose
$\sigma \in \Sigma $, $p^{\prime },p^{\prime \prime }\in
\Lambda_{\B}(\sigma)$, and $p^{\prime }+p^{\prime \prime }=p$. \
Then either $p_{\max }^{\prime }=0$ or $p_{\max }^{\prime \prime
}=0$. \ Assume, without loss of generality, that $p_{\max }^{\prime
}=0$. \ Then $p^{\prime }=p_{0}$, and by our construction of $\phi
_{p_{0}}$, diagram (\ref{DI}) commutes. \ Set $\mathcal{F}_{1}=\{\phi _{p}:%
W^{p}\rightarrow A^{p}\}_{p\in \Lambda_{\B}(1)}$; this completes the
base case. \

Let $m>1$ and suppose that $\mathcal{F}_{m-1}$ exists and satisfies the
desired properties. \ We must construct $\mathcal{F}_{m}$. \ For $p\in \Lambda_{\B}(m)$
such that $p_{\max }<m$, there exists $\phi _{p}\in \mathcal{F}_{m-1}$ by
hypothesis. \ We include these $\phi _{p}$ in $\mathcal{F}_{m}$.  For such $p$ we have the following: if $%
\sigma \in \Sigma $, $p^{\prime },p^{\prime \prime }\in \Lambda_{\B}(\sigma )$, and $p=p^{\prime }+p^{\prime \prime }$, then diagram (\ref{DI}%
) commutes. \ Indeed, $p=p^{\prime }+p^{\prime \prime }$ implies that $%
p^{\prime },p^{\prime \prime }\in \Lambda_{\B}(m-1)$. \ Therefore
$\phi _{p^{\prime }}$ and $\phi _{p^{\prime \prime }}$ are also obtained
from $\mathcal{F}_{m-1}$, and diagram (\ref{DI}) commutes by hypothesis. \

Suppose $p\in \Lambda_{\B}(m)$ satisfies $p_{\max }=m$. \ Choose
an order type $\sigma \in \Sigma $ such that $p\in \Lambda%
_{B}(\sigma)$. \ Note that $\sigma $ may not be unique. \ Choose $%
q_{1},...,q_{n}\in \Lambda_{\B}(\sigma,m-1)$ by Lemma \ref%
{megapeel}. \ Now apply Lemma \ref{kerlemma} to obtain an $SL_{2}$%
-isomorphism $\psi :W^{p}\rightarrow A^{p}$ such that the
following diagram commutes:
\begin{equation}
\begin{array}{ccc}
W^{q_{1}}\otimes \cdots \otimes W^{q_{n}} &
\longrightarrow & W^{p} \\
\downarrow {\small \phi } &  & \downarrow {\small \psi } \\
A^{q_{1}}\otimes \cdots \otimes A^{q_{n}} & \longrightarrow & A^{p}%
\end{array}
\label{DII}
\end{equation}%
where $\phi =\phi _{q_{1}}\otimes \cdots \otimes \phi _{q_{n}}$. \ We now
show that (i) if $p^{\prime },p^{\prime \prime }\in \Lambda%
_{\B}(\sigma)$ satisfy $p=p^{\prime }+p^{\prime \prime }$ then
\begin{equation}
\begin{array}{ccc}
W^{p^{\prime }}\otimes W^{p''} &
\longrightarrow & W^{p} \\
\downarrow &  & \downarrow \psi \\
A^{p^{\prime }}\otimes A^{p^{\prime \prime }} & \longrightarrow & A^{p}%
\end{array}
\label{DII.1}
\end{equation}%
commutes, (ii) $\psi $ is independent of the choice of $q_{1},...,q_{n}$,
and (iii) $\psi $ is independent of the choice of $\sigma $.

First note that (i) implies (ii). \ Indeed, suppose $q_{1}^{\prime
},...,q_{n}^{\prime }\in \Lambda_{\B}(\sigma,m-1)$ is another
collection of shapes satisfying the conditions of Lemma
\ref{megapeel}, and $\psi ^{\prime }:W^{p}\rightarrow A^{p}$ is the
associated $SL_{2}$-isomorphism obtained by Lemma \ref{kerlemma}. \
By (i) both $\psi $ and $\psi ^{\prime }$ would make (\ref{DII.1})
commute. \ But since all the maps in the diagram are surjective,
there is a unique map making (\ref{DII.1}) commute. \ Therefore
$\psi =\psi ^{\prime }$.

Now we prove (i).
If $%
p_{\max }^{\prime }=m$ (resp. $p_{\max }^{\prime \prime }=m$) then $%
p^{\prime \prime }=p_{0}$ (resp. $p^{\prime }=p_{0}$), and (\ref{DII.1})
commutes by our choice of $\phi _{p_{0}}$. \ Therefore we may assume that $%
p_{\max }^{\prime },p_{\max }^{\prime \prime }<m$. \ By renumbering the $%
q_{j}$ if necessary, we may assume that $(q_{1})_{\max }\not=0$. \ Let $%
q^{\prime }=q_{1}$ and $q^{\prime \prime }=q_{2}+\cdots +q_{n}$. \ Then $%
q^{\prime },q^{\prime \prime }\in \Lambda_{\B}(\sigma,m-1)$ and $%
q^{\prime }+q^{\prime \prime }=p$. \ By inductive hypothesis the following
diagram commutes:
\begin{equation}
\begin{array}{ccc}
W^{q_{1}}\otimes \cdots \otimes W^{q_{n}} &
\longrightarrow & W^{q^{\prime }}\otimes W^{q^{\prime
\prime }} \\
\downarrow &  & \downarrow \\
A^{q_{1}}\otimes \cdots \otimes A^{q_{n}} & \longrightarrow & A^{q^{\prime
}}\otimes A^{q^{\prime \prime }}%
\end{array}
\label{DIII}
\end{equation}%
where the vertical map on the left is $\phi =\phi _{q_{1}}\otimes \cdots
\otimes \phi _{q_{n}}$, and the one on the right is $\phi _{q^{\prime
}}\otimes \phi _{q^{\prime \prime }}$. \ Combining (\ref{DII}) and (\ref%
{DIII}) and the fact that all the maps are surjective (Proposition \ref%
{SpnSurjectivityCorollary}), we conclude that
\begin{equation}
\begin{array}{ccc}
W^{q^{\prime }}\otimes W^{q^{\prime \prime }} &
\longrightarrow & W^{p} \\
\downarrow &  & \downarrow {\small \psi } \\
A^{q^{\prime }}\otimes A^{q^{\prime \prime }} & \longrightarrow & A^{p}%
\end{array}
\label{DIV}
\end{equation}%
commutes.

Since $p'+p''=q'+q''$, by Lemma \ref{cubic} there exist $t^{\prime
},t^{\prime \prime },r^{\prime },r^{\prime \prime }\in
\Lambda_{\B}(\sigma,m)$ such that
\begin{eqnarray*}
t^{\prime }+r^{\prime } = p^{\prime },
t^{\prime \prime }+r^{\prime \prime } =p^{\prime \prime } \\
t^{\prime }+t^{\prime \prime } =q^{\prime },
r^{\prime }+r^{\prime \prime } =q^{\prime \prime }\text{.}
\end{eqnarray*}%
Consider the following diagram:
\begin{equation*}
\xymatrix{
& W^{r^{\prime }}\otimes W^{t^{\prime }}\otimes W
^{r^{\prime \prime }}\otimes W^{t^{\prime \prime }} \ar[dl]\ar[rr]\ar'[d][dd]
& & W^{q^{\prime }}\otimes W^{q^{\prime \prime }} \ar[dl]\ar[dd]
\\
W^{p^{\prime }}\otimes W^{p''} \ar[rr]\ar[dd]
& & W^{p} \ar[dd]
\\
& A^{r^{\prime }}\otimes A^{t^{\prime }}\otimes A^{r^{\prime \prime }}\otimes
A^{t^{\prime \prime }} \ar[dl]\ar'[r][rr]
& & A^{q^{\prime
}}\otimes A^{q^{\prime \prime }} \ar[dl]
\\
A^{p^{\prime }}\otimes A^{p^{\prime \prime }} \ar[rr]
& & A^{p}
}
\end{equation*}
The top square commutes by associativity of the product in $\calli{B}%
_{\sigma }$. \ The left and back squares commute by inductive hypothesis. \ The right square
commutes since it is the diagram (\ref{DIV}). \ The bottom square commutes by
associativity of the product in $\calli{A}_{\sigma }$. \ By chasing this diagram and repeatedly
using Proposition \ref{SpnSurjectivityCorollary}, it follows that the front square
commutes. \ This proves (ii).

We now prove (iii), namely that $\psi $ is independent of $\sigma $. \
Indeed, suppose $\tau \in \Sigma $ is another order type such that $p\in
\Lambda_{\B}(\tau)$. \ By the above argument we obtain an $SL_{2}$ isomorphism $\zeta :W^{p}\rightarrow A^{p}$ such that
(\ref{DII.1}) commutes for all $p^{\prime },p^{\prime \prime }\in
\Lambda_{\B}(\tau)$. By Lemma \ref{peel}(2) there exist $p^{\prime },p^{\prime \prime }\in \Lambda_{\B}(\sigma)\cap \Lambda_{\B}(\tau)$ such that $p=p^{\prime
}+p^{\prime \prime }$. \ Therefore both $\psi $ and $\zeta $ make the
following diagram commute:
\begin{equation*}
\begin{array}{ccc}
W^{p^{\prime }}\otimes W^{p''} &
\longrightarrow & W^{p} \\
\downarrow &  & \downarrow {\small \psi ,\zeta } \\
A^{p^{\prime }}\otimes A^{p^{\prime \prime }} & \longrightarrow & A^{p}%
\end{array}%
\end{equation*}%
Hence $\psi =\zeta $.

At this point we've shown for any $p\in \Lambda_{\B}(m)$ there is
a canonical $SL_{2}$ isomorphism $\psi :W^{p}\rightarrow A^{p}$
satisfying the property: for any $\sigma \in \Sigma $ and $p^{\prime
},p^{\prime \prime }\in \Lambda_{\B}(\sigma)$ such that $%
p=p^{\prime }+p^{\prime \prime }$, diagram (\ref{DII.1}) commutes. \ Set $%
\phi _{p}=\psi $ and define $\mathcal{F}_{m}=\{\phi _{p}:W^{p}\rightarrow A^{p}\}_{p\in \Lambda_{\B}(m)}$. \ This completes
the induction.

Let $\mathcal{F}=\bigcup_{m=1}^{\infty }\mathcal{F}_{m}$.  By construction, $\calli{F}$ is a compatible family of $SL_{2}$-isomorphisms.
This completes the proof of the first statement of the
proposition.

Now suppose $\widetilde{\mathcal{F}}=\{\widetilde{\phi }_{p}:%
W^{p}\rightarrow A^{p}\}_{p\in \Lambda_{\B}}$ is another compatible
family of $SL_{2}$-isomorphisms. \ We will show by induction on
$p_{\max }$ that there exist a set of nonzero scalars $\{c_{p}\in
\mathbb{C}^{\times }:p\in \Lambda_{\B}\}$, such that $\phi
_{p}=c_{p}\widetilde{\phi }_{p}$ for all $p\in \Lambda_{\B}$.

We already noted that by Schur's Lemma each isomorphism $\phi _{p}$ with $%
p_{\max }=1$ is unique up to scalar. \ Therefore there exist $c_{p}\in
\mathbb{C}^{\times }$ such that%
\begin{equation}
\phi _{p}=c_{p}\widetilde{\phi }_{p}  \label{uptoscalar}
\end{equation}%
for all $p$ with $p_{\max }=1$. \ Let $m>1$. \ Suppose now that there exist
scalars so that (\ref{uptoscalar}) holds for all $p\in \Lambda_{\B}$ such that $p_{\max }<m$. \ Let $p\in \Lambda_{\B}$ with $p_{\max
}=m$. \ Choose some $\sigma \in \Sigma $ such that $p\in \Lambda_{\B}(\sigma)$. \ By Lemma \ref{peel}, there exist $p^{\prime },p^{\prime
\prime }\in \Lambda_{\B}(\sigma,m-1)$ such that $p=p^{\prime
}+p^{\prime \prime }$. \ Then by the compatibility of $\mathcal{F}$ the
following diagram commutes:
\begin{equation}
\begin{array}{ccc}
W^{p^{\prime }}\otimes W^{p''} &
\longrightarrow & W^{p} \\
\downarrow &  & \downarrow \\
A^{p^{\prime }}\otimes A^{p^{\prime \prime }} & \longrightarrow & A^{p}%
\end{array}
\label{DV}
\end{equation}%
where the vertical maps are $\phi _{p^{\prime }}\otimes \phi _{p^{\prime
\prime }}$ and $\phi _{p}$. \ By hypothesis, $$\phi _{p^{\prime }}\otimes
\phi _{p^{\prime \prime }}=c_{p^{\prime }}c_{p^{\prime \prime }}\widetilde{%
\phi }_{p^{\prime }}\otimes \widetilde{\phi }_{p^{\prime \prime
}}.$$ \
Therefore (\ref{DV}) commutes with the vertical maps replaced by $\widetilde{%
\phi }_{p^{\prime }}{\tiny \otimes }\widetilde{\phi }_{p^{\prime \prime }}$
and $\frac{1}{c_{p^{\prime }}c_{p^{\prime \prime }}}\phi _{p}$. \ Hence $%
\frac{1}{c_{p^{\prime }}c_{p^{\prime \prime }}}\phi _{p}=\widetilde{\phi }%
_{p}$, or, in other words, $c_{p}=c_{p^{\prime }}c_{p^{\prime \prime }}$ and
$\phi _{p}=c_{p}\widetilde{\phi }_{p}$. \ This completes the induction, and
shows that $\phi _{p}\in \mathcal{F}$ is unique up to scalar.
\end{proof}

We can now prove Proposition \ref{CorIsomMtoA}.  Indeed, let
$\mathcal{F}=\{\phi _{p}:W^{p}\rightarrow A^{p}\}_{p\in
\Lambda_{\B}}$ be a compatible
family of $SL_{2}$-isomorphisms guaranteed by the above proposition.  Define a map%
\begin{equation}
\label{phisigma}
\phi _{\sigma }:\calli{B}_{\sigma}\rightarrow \mathcal{A}%
_{\sigma }
\end{equation}%
by%
\begin{equation*}
\phi _{\sigma }|_{W^{p}}=\phi _{p}
\end{equation*}%
for all $p\in \Lambda_{\B}(\sigma)$, and extend linearly. \ Since
$\mathcal{F}$ is a compatible family, $\phi _{\sigma }$ is an
isomorphism of $SL_{2}$-algebras. \ Indeed, the commutativity of
diagram (\ref{DI}) means precisely that $\phi _{\sigma }$ is an
algebra homomorphism.

Now suppose $\widetilde{\phi}_{\sigma}:\B_{\sigma} \rightarrow
\mathcal{A}_{\sigma}$ is some other isomorphism of
$SL_{2}$-algebras.  Set $\widetilde{\phi} _{p}=\widetilde{\phi}
_{\sigma }|_{W^{p}}$. Then
$$
\{ \widetilde{\phi}_{p} : p \in \Lambda_{\B}(\sigma)\}
$$
is a compatible family of $SL_{2}$ isomorphisms.  Therefore, by the
above proposition, there scalars $c_{p}$ such that $\phi
_{p}=c_{p}\widetilde{\phi}_{p}$.  Therefore the graded components of
$\widetilde{\phi}_{\sigma}$ are scalar multiples of the graded
components of $\phi_{\sigma}$, i.e. $\phi_{\sigma}$ is unique up to
scalars.  This proves part (1) of Proposition \ref{CorIsomMtoA}.

To prove part (2) we define a representation of $L$ on
$\calli{B}_{\sigma}$, denoted
$\Phi _{\sigma }$, by the formula%
\begin{equation*}
\Phi _{\sigma }(g)=\phi _{\sigma }^{-1}\circ \theta _{\sigma
}(g)\circ \phi _{\sigma }.
\end{equation*}%
Here $g\in L$, $\phi _{\sigma }$ is the algebra isomorphism coming
from Proposition \ref{BigProp} as in (\ref{phisigma}), and
$\theta_{\sigma}$ is the action of $L$ on $\calli{A}_{\sigma}$
defined in (\ref{defAsigma}). We must show that $\Phi _{\sigma }$ is
independent of the choice of $\phi _{\sigma }$.

Indeed, suppose we are given another isomorphism
$\widetilde{\phi}_{\sigma}$ and use it to define the corresponding
representation of $L$ on $\B_{\sigma}$, which we denote
$\widetilde{\Phi}_{\sigma}$.  Now, choose scalars $c_{p}$ as above.
Then for any $g\in L$ and $p \in \Lambda_{\B}(\sigma)$,
\begin{eqnarray*}
\Phi _{\sigma}(g)|_{W^{p}} &=&\phi _{p}^{-1}\circ \theta
_{p}(g)\circ \phi _{p} \\
&=&(c_{p}\widetilde{\phi}_{p})^{-1}\circ \theta _{p}(g)\circ (c_{p}%
\widetilde{\phi}_{p}) \\
&=&(\widetilde{\phi}_{p})^{-1}\circ \theta _{p}(g)\circ
\widetilde{\phi}_{p}
\\
&=&\widetilde{\Phi} _{\sigma}(g)|_{W^{p}}.
\end{eqnarray*}%
Therefore $\Phi _{\sigma}=\widetilde{\Phi} _{\sigma}$.  This proves
part (2) of Proposition \ref{CorIsomMtoA}.

\subsection{\label{SectionProofBigThe}Proof of Theorem \ref{ExtThm2}}
%
%

\textbf{Existence}:
Consider the representations $(\Phi _{\sigma },\B_{\sigma})$ of $L$
from Proposition \ref{CorIsomMtoA}, and let $mathcal{F}$ be the
compatible family guaranteed by Proposition \ref{BigProp}. These
representations satisfy four desirable properties, all of which are
almost tautologies.

(i) \ For any $p\in \Lambda_{\B}(\sigma)$, $W^{p}$ is
an irreducible $L$-submodule isomorphic to $\bigotimes_{i=1}^{n}F^{r_{i}(p)}$%
. \ Indeed, by definition of $\Phi _{\sigma }$, $\phi
_{p}:W^{p}\rightarrow A^{p}$ is an isomorphism of $L$-modules.

(ii) \ $L$ acts as algebra automorphisms on $\B_{\sigma}$.\ In other
words, we claim that for $p,p^{\prime }\in \Lambda_{\B}(\sigma)$,
the product map, $W^{p}\otimes W^{p^{\prime }}\rightarrow
W^{p+p^{\prime }}$, is a homomorphism of $L$-modules. \ Indeed, by
the compatibility of $\mathcal{F}$, the product map factors as
follows:
\begin{equation*}
\begin{array}{ccc}
W^{p}\otimes W^{p^{\prime }} & \longrightarrow &
W^{p+p^{\prime }} \\
\downarrow &  & \uparrow {\tiny \phi }_{p+p^{\prime }}^{-1} \\
A^{p}\otimes A^{p^{\prime }} & \longrightarrow & \calli{A}_{p+p^{\prime }}%
\end{array}%
\end{equation*}%
Since the three lower maps are $L$-module morphisms, it follows that the top
map is too.

(iii) \ $Res_{SL_{2}}^{L}(\Phi_{\sigma})$ is the natural action of
$SL_{2}$ on $\calli{B}_{\sigma}$.  In other words, for $x\in SL_{2}$
\begin{equation*}
x|_{\calli{B}_{\sigma}}=\Phi _{\sigma }(\delta (x))
\end{equation*}%
where $x|_{\calli{B}_{\sigma}}$ denotes the natural action of $%
x $ on $\calli{B}_{\sigma}$ and $\delta$ is the diagonal embedding
of $SL_{2}$ into $L$. \ Indeed, $\phi _{\sigma }$
intertwines the natural action of $SL_{2}$ on $\B_{\sigma }$ with the diagonal $SL_{2}$-action on $\mathcal{A}%
_{\sigma }$. \ This means that%
\begin{equation*}
\phi _{\sigma }\circ (x|_{\calli{B}_{\sigma}})=\theta _{\sigma
}(\delta (x))\circ \phi _{\sigma }\text{.}
\end{equation*}%
Therefore
\begin{eqnarray*}
\Phi _{\sigma }(\delta (x)) &=&\phi _{\sigma }^{-1}\circ \theta _{\sigma
}(\delta (x))\circ \phi _{\sigma } \\
&=&\phi _{\sigma }^{-1}\circ \phi _{\sigma }\circ (x|_{\B%
_{\sigma }}) \\
&=&x|_{\calli{B}_{\sigma}}\text{.}
\end{eqnarray*}

(iv) \ For any $\sigma _{1},\sigma _{2}\in \Sigma $ and $g\in L$%
\begin{equation}
\Phi _{\sigma _{1}}(g)|_{\B_{\sigma _{1}}\cap
\B_{\sigma _{2}}}=\Phi _{\sigma _{2}}(g)|_{\B
_{\sigma _{1}}\cap \B_{\sigma _{2}}}\text{.}
\label{welldef}
\end{equation}%
Indeed, suppose $p\in \Lambda_{\B}(\sigma _{1})\cap
\Lambda_{\B}(\sigma _{2})$. \ Then $\phi _{\sigma
_{1}}|_{W^{p}}=\phi _{p}=\phi _{\sigma _{2}}|_{W^{p}}$ from which
(\ref{welldef}) immediately follows.

We now construct from $\mathcal{F}$ the representation $(\Phi _{\mathcal{F}},%
\B)$ of $L$ satisfying the conditions of Theorem \ref{ExtThm2}. \ Let $g\in L$. \ Define $\Phi _{\mathcal{F}%
}(g)$ on $\B$ by%
\begin{equation*}
\Phi _{\mathcal{F}}(g)|_{\calli{B}_{\sigma}}=\Phi _{\sigma }(g)%
\text{.}
\end{equation*}%
By (\ref{welldef}) this is well-defined, and since $\sum_{\sigma }
\B_{\sigma }=\B$, this gives an action of $L$
on all of $\B$. \ Moreover, by properties (i) and (ii), $%
(\Phi _{\mathcal{F}},\B)$ satisfies the conditions of
Theorem \ref{ExtThm2}. \ By property (iii) $(\Phi _{\mathcal{F}},
\B)$ extends the natural action of $SL_{2}$ on $
\B$.

\textbf{Uniqueness}: \ Suppose $(\Phi, \B)$ is some representation
of $L$ satisfying the conditions of Theorem \ref{ExtThm2}.
\ By Proposition \ref{CorIsomMtoA}, to show uniqueness it suffices to show that there exists a compatible family $\widetilde{\mathcal{F}%
}=\{\widetilde{\phi }_{p}:W^{p}\rightarrow A^{p}\}_{p\in
\Lambda_{\B}}$ such that $\Phi =\Phi _{\widetilde{\mathcal{F}}}$.

By condition (1) of the theorem, $W^{p}$ is isomorphic to $A^{p}$ as
$L$-modules for every $p\in \Lambda_{\B}$.
In particular, we can choose a set of $L$-isomorphisms $\{\widetilde{%
\phi }_{p}:W^{p}\rightarrow A^{p}\}_{p_{\max }=1}$. \ From this
set we construct a compatible family $\widetilde{\mathcal{F}}=\{\widetilde{%
\phi }_{p}:W^{p}\rightarrow A^{p}\}_{p\in \Lambda_{\B}}$ as in the
proof of Proposition \ref{BigProp}.

To show that $\Phi =\Phi _{\widetilde{\mathcal{F}}}$, we need to
show that for all $g\in L$ and $p\in \Lambda_{\B}$, the following
diagram commutes:
\begin{equation}
\begin{array}{ccc}
W^{p} & \overset{\Phi (g)}{\longrightarrow } & W^{p} \\
\downarrow &  & \downarrow \\
A^{p} & \overset{\theta _{p}(g)}{\longrightarrow } & A^{p}%
\end{array}
\label{uniquephi}
\end{equation}%
where the vertical maps are both $\widetilde{\phi }_{p}$. \ We prove this by
induction on $p_{\max }$.

Let $p\in \Lambda_{\B}$. \ If $p_{\max }=1$, then (\ref{uniquephi}%
) commutes by our choice of $\{\widetilde{\phi }_{p}:W^{p}\rightarrow A^{p}\}_{p_{\max }=1}$ above. \ Let $m>1$ and assume (\ref%
{uniquephi}) commutes for all $p$ such that $p_{\max }<m$. \ Suppose then that $%
p_{\max }=m$. \ Choose some $\sigma \in \Sigma $ such that $p\in
\Lambda_{\B}(\sigma)$. \ By Lemma \ref{peel}, there exist $p^{\prime
},p^{\prime \prime }\in \Lambda_{\B}(\sigma,m-1)$ such that $%
p=p^{\prime }+p^{\prime \prime }$. \ Consider the following cube:
\begin{equation*}
\xymatrix{
& W^{p^{\prime }}\otimes W^{p''} \ar[dl]\ar[rr]^{\Phi (g)\otimes \Phi (g)}\ar'[d][dd]
& & W^{p^{\prime }}\otimes W^{p''} \ar[dl]\ar[dd]
\\
W^{p} \ar[rr]^>>>>>>>>>>>>{\Phi (g)}\ar[dd]
& & W^{p} \ar[dd]
\\
& A^{p^{\prime }}\otimes A^{p^{\prime \prime }} \ar[dl]\ar'[r][rr]^>>>>>>>>>>>>>>>>>>>>>>>>>>>>>{\theta
_{p^{\prime }}(g)\otimes \theta _{p^{\prime \prime }}(g)}
& & A^{p^{\prime }}\otimes A^{p^{\prime \prime }} \ar[dl]
\\
A^{p} \ar[rr]^>>>>>>>>>>>>{\theta _{p}(g)}
& & A^{p}
}
\end{equation*}
The top square commutes since $\Phi $ satisfies condition (2) of Theorem \ref{ExtThm2}%
. \ The left and right squares commute by the compatibility of
$\mathcal{F}$. \ The bottom square commutes since the product on
$\mathcal{A}_{\sigma }$ intertwines the $L$-action. \ Finally, the
back square commutes by inductive hypothesis. \ Since all the maps
are surjective, we conclude that the front square commutes. \ This
completes the induction, and proves that $\Phi =\Phi
_{\widetilde{\mathcal{F}}}$.

This completes the proof of Theorem \ref{ExtThm2}.

\section{Proof of Corollary \ref{ExtCor}\label{section7}}
Let $T_{SL_{2}}$ be the torus of $SL_{2}$ consisting of diagonal
matrices. Let $T_{L}=T_{SL_{2}}\times \cdots \times T_{SL_{2}}$ be
the diagonal torus of $L$. From elementary representation theory of
$SL_{2}$ we know that irreducible $L$-modules decompose canonically
into one dimensional $T_{L}$ weight spaces.  Now let
$(\Phi,\calli{B})$ be the representation of $L$ afforded by Theorem
\ref{ExtThm2}. Then by part (1) of Theorem \ref{ExtThm2}, the
multiplicity spaces $W^p$, ($p \in \Lambda_{\B}$), decompose into
one dimensional spaces.

The decomposition is canonical.  Indeed, $T_{L}$ is the unique torus
of $L$ containing $T_{SL_{2}}$. Moreover, the choice of the torus
$T_{SL_{2}}$ is induced by our choice of torus of $Sp_{n}$, i.e.
$$
T_{SL_{2}}=SL_{2} \cap T_{C_{n}}.
$$
Therefore the decomposition of $W^{\lambda / \mu}$ into one
dimensional spaces depends only the choice of torus of $Sp_{n}$.  We
now make this decomposition more precise.

\begin{lemma}
\label{basislemma}Let $\lambda / \mu \in \Lambda_{\B}$. \ The weight
spaces of $T_{L}$ on $W^{\lambda / \mu}$ are indexed by the set
$\{\gamma \in \Lambda_{n}:\mu <\gamma <\lambda ^{+}\}$. \ An element
$\gamma $ corresponds to the weight
\begin{equation*}
(t_{1},...,t_{n})\mapsto \dprod\limits_{i=1}^{n}t_{i}^{2\gamma
_{i}-x_{i}-y_{i}}\text{.}
\end{equation*}%
where $h(\lambda / \mu)=(x_{1},y_{1},...,x_{n},y_{n})$.
\end{lemma}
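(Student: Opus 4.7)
The plan is to transfer the weight-space computation from $W^{\lambda/\mu}$ to $A^{\lambda/\mu}$ via the $L$-isomorphism provided by Theorem~\ref{ExtThm2}, and then parametrize the weights explicitly using Lemma~\ref{simplelemma}. Since $\Phi$ makes $W^{\lambda/\mu}$ into an $L$-module isomorphic to $A^{\lambda/\mu}=\bigotimes_{i=1}^{n}F^{r_{i}(\mu,\lambda^{+})}$, the $T_{L}$-weight space decompositions of the two modules coincide under $\phi_{\lambda/\mu}$, so it suffices to describe the $T_{L}$-weights on $A^{\lambda/\mu}$.

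Next I would unpack the weights on each factor. Writing $r_{i}=x_{i}-y_{i}$ and realising $F^{r_{i}}$ as the homogeneous polynomials of degree $r_{i}$ on $\bbC^{2}$ with $SL_{2}$ acting by right translation, a direct computation shows that for $t_{i}=\mathrm{diag}(q,q^{-1})\in T_{SL_{2}}$ the monomial $x_{1}^{r_{i}-j_{i}}x_{2}^{j_{i}}$ is a weight vector of weight $q^{r_{i}-2j_{i}}$, with $0\le j_{i}\le r_{i}$. Tensoring across $i=1,\ldots,n$, the module $A^{\lambda/\mu}$ has one-dimensional $T_{L}$-weight spaces indexed by tuples $(j_{1},\ldots,j_{n})$ with $0\le j_{i}\le r_{i}$, and a pure tensor of the monomials above has weight $(t_{1},\ldots,t_{n})\mapsto \prod_{i=1}^{n}t_{i}^{r_{i}-2j_{i}}$.

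Finally I would set up the bijection with $\{\gamma\in\Lambda_{n}:\mu<\gamma<\lambda^{+}\}$ by the substitution $\gamma_{i}=x_{i}-j_{i}$, so that $j_{i}=x_{i}-\gamma_{i}$. The constraint $0\le j_{i}\le r_{i}=x_{i}-y_{i}$ translates precisely into $y_{i}\le\gamma_{i}\le x_{i}$ for all $i$, which by Lemma~\ref{simplelemma} is equivalent to $\mu<\gamma<\lambda^{+}$; moreover the $i$-th exponent becomes
\[
r_{i}-2j_{i}=(x_{i}-y_{i})-2(x_{i}-\gamma_{i})=2\gamma_{i}-x_{i}-y_{i},
\]
which is exactly the claimed character. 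Since this bijection is manifestly well-defined and exhausts all admissible tuples, the lemma follows.

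There is no real obstacle: the content of the lemma is already essentially packaged inside Theorem~\ref{ExtThm2} together with the combinatorial dictionary of Lemma~\ref{simplelemma}. The only thing to double-check is the sign convention in the exponents, which is why I would use the parametrization $\gamma_{i}=x_{i}-j_{i}$ rather than $\gamma_{i}=y_{i}+j_{i}$ so that the resulting weight matches the formula $2\gamma_{i}-x_{i}-y_{i}$ in the statement.
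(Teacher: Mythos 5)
Your argument is correct and follows essentially the same route as the paper: use the $L$-module isomorphism $W^{\lambda/\mu}\cong A^{\lambda/\mu}=\bigotimes_{i=1}^{n}F^{r_{i}}$ from Theorem \ref{ExtThm2} to reduce to the $T_{L}$-weights of the tensor product, then reindex the tuples $(j_{1},\ldots,j_{n})$ by $\gamma$ via Lemma \ref{simplelemma} to obtain the exponents $2\gamma_{i}-x_{i}-y_{i}$. The only difference is cosmetic: you substitute $\gamma_{i}=x_{i}-j_{i}$ where the paper uses $\gamma_{i}=y_{i}+j_{i}$, which merely reflects the opposite sign convention in parametrizing the $SL_{2}$-weights and yields the same character formula and the same index set.
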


\begin{proof}
Set $p=\lambda / \mu$.  Since $W^{p}$ is isomorphic as an $L$-module
to $\bigotimes_{i=1}^{n}F^{r_{i}(p)}$, the
weight spaces of $T_{L}$ on $W^{p}$ are indexed by%
\begin{equation*}
\{(j_{1},...,j_{n}):0\leq j_{i}\leq r_{i}(p)\text{ for }i=1,...,n\}\text{.}
\end{equation*}%
Indeed, such a sequence $(j_{1},...,j_{n})$ corresponds to the weight
\begin{equation*}
(t_{1},...,t_{n})\mapsto \dprod\limits_{i=1}^{n}t_{i}^{(-r_{i}(p)+2j_{i})}%
\text{.}
\end{equation*}%
Now there is a one-to-one correspondence between
\begin{equation*}
\{(j_{1},...,j_{n}):0\leq j_{i}\leq r_{i}(p)\text{ for }i=1,...,n\}
\end{equation*}
and
\begin{equation*}
\{(\gamma _{1},...,\gamma _{n}):y_{i}\leq \gamma _{i}\leq
x_{i}\text{ for }i=1,...,n\}
\end{equation*}%
given by $(j_{1},...,j_{n})\mapsto (j_{1}+y_{1},...,j_{n}+y_{n})$. \
Therefore, by Lemma \ref{simplelemma}, the weight spaces of $T_{L}$ on $W^{p}$ are indexed
by $\{\gamma \in \Lambda_{n}:\mu <\gamma <\lambda ^{+}\}$. \ Unwinding these identifications, we see that a pattern $\gamma $ corresponds
to the weight
\begin{equation*}
(t_{1},...,t_{n})\mapsto
\dprod\limits_{i=1}^{n}t_{i}^{-(x_{i}+y_{i})+2\gamma _{i}}\text{.}
\end{equation*}
\end{proof}

For $\lambda / \mu \in \Lambda_{\B}$ and $\gamma \in \{\gamma \in
\Lambda_{n}:\mu <\gamma <\lambda ^{+}\}$, let $W^{\lambda / \gamma /
\mu}$ be the $T_{L}$ weight space of $W^{\lambda / \mu}$
corresponding to the weight $\gamma$ by the above lemma.  Then we
obtain a decomposition of $W^{\lambda / \mu}$ into one dimensional
weight spaces:
$$
W^{\lambda / \mu}=\bigoplus_{\substack{ \gamma \in \Lambda_{n} \\
\mu <\gamma <\lambda ^{+}}}W^{\lambda / \gamma / \mu}.
$$
This decomposition is canonical in the sense that it only depends on
the choice of torus $T_{C_{n}} \subset Sp_{2n}$.  This completes the
proof of Corollary \ref{ExtCor}.

\end{document}